\newenvironment{thm}{\subsection{}{\textbf {Theorem.}}\em}{}
\newenvironment{prop}{\subsection{}{\textbf {Proposition.}}\em}{}
\newenvironment{cor}{\subsection{}{\textbf {Corollary.}}\em}{}
\newenvironment{lem}{\subsection{}{\textbf {Lemma.}}\em}{}
\newenvironment{pf}{\noindent{\textbf {Proof.}}}
{\begin{flushright}\eop \end{flushright}\smallskip}
\newenvironment{eg}{\subsection{}{\textbf {Example.}}}{\smallskip}
\newenvironment{rem}{\subsection{}{\textbf {Remark.}}}{\smallskip}
\newcommand\fM{\ensuremath{\mathfrak M}}
\newcommand\fX{\ensuremath{\mathfrak X}}
\newcommand\cA{\ensuremath{\mathcal A}}
\newcommand\cB{\ensuremath{\mathcal B}}
\newcommand\cC{\ensuremath{\mathcal C}}
\newcommand\cD{\ensuremath{\mathcal D}}
\newcommand\cE{\ensuremath{\mathcal E}}
\newcommand\cF{\ensuremath{\mathcal F}}
\newcommand\cH{\ensuremath{\mathcal H}}
\newcommand\cK{\ensuremath{\mathcal K}}
\newcommand\cL{\ensuremath{\mathcal L}}
\newcommand\cM{\ensuremath{\mathcal M}}
\newcommand\cP{\ensuremath{\mathcal P}}
\newcommand\cS{\ensuremath{\mathcal S}}
\newcommand\cV{\ensuremath{\mathcal V}}
\newcommand\cW{\ensuremath{\mathcal W}}
\newcommand\cX{\ensuremath{\mathcal X}}
\newcommand\cY{\ensuremath{\mathcal Y}}
\newcommand\cZ{\ensuremath{\mathcal Z}}
\newcommand\bbC{\ensuremath{\mathbb C}}
\newcommand\bbN{\ensuremath{\mathbb N}}
\newcommand\bbZ{\ensuremath{\mathbb Z}}
\newcommand\hilb{\ensuremath{\mathcal H}}
\newcommand\eop{{{\hfil \ensuremath \Box}}}
\newcommand\norm{\ensuremath {\Vert}}
\newcommand\bofh{\ensuremath{\cB ( \cH)}}
\newcommand\ran{\ensuremath {\mathrm {ran}}}
\newcommand\nul{\ensuremath {\mathrm {nul}}}
\newcommand{\abs}[1]{\lvert#1\rvert}
\renewcommand{\le}{\leqslant}
\renewcommand{\ge}{\geqslant}
\renewcommand{\mid}{\::\:}
\newcommand{\codim}{\mathrm{codim}}
\newcommand{\term}[1]{{\textit{\textbf{#1}}}}
\begin{document}

\title[On Almost-Invariant Subspaces and Approximate Commutation]
      {On Almost-Invariant Subspaces and Approximate Commutation}

\thanks{${}^1$ Research supported in part by NSERC (Canada)}
\thanks{2010 AMS Subject Classification: 47A15, 47A46, 47B07, 47L10 }
\thanks{\today}

\author[L.W. Marcoux]{{Laurent W.~Marcoux${}^1$}}
\author[A.~I.~Popov]{Alexey I. Popov${}^1$}
\author[H.~Radjavi]{Heydar Radjavi${}^1$}

\address
	{Department of Pure Mathematics\\
	University of Waterloo\\
	Waterloo, Ontario \\
	Canada  \ \ \ N2L 3G1}
\email{LWMarcoux@math.uwaterloo.ca}
\email{a4popov@uwaterloo.ca}
\email{hradjavi@uwaterloo.ca}

\begin{abstract}
A closed subspace of a Banach space $\cX$ is almost-invariant for a collection $\cS$ of bounded linear operators on $\cX$ if for each $T \in \cS$ there exists a finite-dimensional subspace $\cF_T$ of $\cX$ such that $T \cY \subseteq \cY + \cF_T$.   In this paper, we study the existence of almost-invariant subspaces of infinite dimension and codimension  for various classes of Banach and Hilbert space operators.   We also examine the structure of operators which admit a maximal commuting family of almost-invariant subspaces. In particular, we prove that if $T$ is an operator on a separable Hilbert space and if $TP-PT$ has finite rank for all projections $P$ in a given maximal abelian self-adjoint algebra $\fM$ then $T=M+F$ where $M\in\fM$ and $F$ is of finite rank.
\end{abstract}

\maketitle
\markboth{\textsc{  }}{\textsc{}}


\section{Introduction}
One of the best-known problems in Operator Theory concerns the search for non-trivial, closed, invariant subspaces for an operator (or class of operators) acting on an infinite-dimensional, separable Banach space $\cX$.  The existence or non-existence of such spaces has been proven for large classes of operators. Examples of operators without invariant subspaces have been found by Enflo~\cite{Enfl76,Enfl87} and Read~\cite{R84}. 
Moreover, Read constructed a number of operators without invariant subspaces. Among them are an operator acting on $\ell_1$~\cite{R85}, a quasinilpotent operator~\cite{R97}, a strictly singular operator~\cite{R99}, or an operator without invariant closed sets~\cite{R88}. The most recent construction is published by Sirotkin~\cite{Sir10}. The question of the existence of invariant subspaces for an arbitrary bounded linear operator acting on a reflexive Banach space, and in particular on a Hilbert space,  remains open.  This is the  Invariant Subspace Problem. We refer the reader to~\cite[Section~10]{AA02} for a brief review of this topic. Another source is the monograph~\cite{RR03}.

The problem which we shall examine in this paper is very closely related, but not equivalent, to this problem.  For the purposes of this paper, all subspaces of a Banach space are assumed to be closed in the norm topology.  Given a Banach space $\cX$ and a bounded linear operator $T$ on $\cX$, we shall say that a  subspace $\cY$ of $\cX$ is an \term{almost-invariant subspace} for $T$ if there exists a finite-dimensional subspace $\cM$ of $\cX$ so that $T \cY \subseteq \cY + \cM$. While the finite-dimensional space $\cM$ appearing in this expression is not unique, nevertheless, if the subspace $\cY$ is almost-invariant for $T$, then the minimum dimension of a finite-dimensional subspace $\cM$ for which $T \cY \subseteq \cY + \cM$ is well-defined, and is referred to as the \term{defect} of the subspace $\cY$ for $T$.  Clearly, if $\cY$ is finite-dimensional or finite-codimensional then it is almost-invariant under every operator. This motivates the notion of a \term{half-space}, that is, a subspace of $\cX$ such that  $\dim\, \cY = \dim\, (\cX/\cY) = \infty$. One may then ask whether every operator $T$ on a Banach space $\cX$ has an almost-invariant half-space.

These notions appeared in the papers~\cite{APTT} and~\cite{Popov2010}. In~\cite{APTT}, it was shown that almost-invariant half-spaces exist for a class of operators which includes quasinilpotent weighted shift operators acting on Banach spaces with bases. Among other results, it was shown in~\cite{APTT} that a closed subspace $\cY$ of $\cX$ is almost-invariant for $T$ if and only if there exists a finite-rank perturbation $F$ of $T$ so that $\cY$ is invariant for the operator $T+F$. Also, if $T$ has an almost-invariant half-space then so does $T^*$. The main thrust of~\cite{Popov2010} is the analysis of common almost-invariant half-spaces for algebras of operators. It was shown that if an algebra $\cA$ of operators on a Banach space $\cX$ is norm-closed then the dimensions of the defects corresponding to each operator in $\cA$ are uniformly bounded. Another result of~\cite{Popov2010} is that if a norm-closed algebra $\cA$ is finitely-generated and commutative then the existence of a common almost-invariant half-space for $\cA$ implies the existence of an invariant half-space for $\cA$. Also, it was shown that for a norm-closed algebra $\cA$, the almost-invariant half-spaces of $\cA$ and those of the closure of $\cA$ in the weak operator topology, $\overline{\cA}^{WOT}$, are the same. This is no longer true if $\cA$ is not closed in the norm topology.

The main result of~\cite{APTT} is motivated by the study of almost-invariant half-spaces of Donoghue operators. Recall that a (backward) weighted shift $D$ on $\ell_2$ is called a \term{Donoghue operator} if its weights $(w_i)$ are all non-zero and $\abs{w_i}\downarrow 0$. See, e.g., \cite[Section~4.4]{RR03} for a discussion of Donoghue operators. The following property of Donoghue operators is very interesting from our point of view (see~\cite{D57}, \cite{N65} and~\cite{Y85}):


\begin{thm}
The invariant subspaces of a Donoghue operator are all of the form $\mathrm{span}\,\{e_k\mid k=1,\dots,n\}$, where $(e_n)$ stands for the usual orthonormal basis of $\ell_2$. In particular, Donoghue operators do not have invariant half-spaces.
\end{thm}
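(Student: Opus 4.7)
The plan is to classify all closed invariant subspaces $\cY$ of $D$ and show that they are exactly $\{0\}$, $\mathrm{span}\{e_1,\dots,e_n\}$ for some $n \geq 1$, or $\ell_2$ itself; since none of these has both infinite dimension and infinite codimension, the half-space conclusion then follows. Throughout, write $S(\cY) := \{k \geq 1 \mid \exists\, y \in \cY \text{ with } \langle y,e_k\rangle \neq 0\}$ for the \emph{support profile} of $\cY$. The identity $(Dy)_{k-1} = w_k \langle y,e_k\rangle$ combined with $w_k \neq 0$ shows $k \in S(\cY) \Rightarrow k-1 \in S(\cY)$, so $S(\cY)$ must be $\emptyset$, $\{1,\dots,n\}$, or $\mathbb{N}$.

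The case $S(\cY) = \{1,\dots,n\}$ is elementary. Then $\cY \subseteq \mathrm{span}\{e_1,\dots,e_n\}$; choosing $y \in \cY$ with $y_n \neq 0$, the vector $D^{n-1}y$ is a nonzero scalar multiple of $e_1$, so $e_1 \in \cY$. Subtracting a suitable multiple of $e_1$ from $D^{n-2}y$ produces a nonzero multiple of $e_2$, and iterating gives $e_1,\dots,e_n \in \cY$, hence $\cY = \mathrm{span}\{e_1,\dots,e_n\}$.

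The substantive case is $S(\cY) = \mathbb{N}$, where I aim to show $\cY = \ell_2$ by contradiction. Suppose $\cY \neq \ell_2$ and pick $0 \neq g \in \cY^\perp$ with minimum nonzero coordinate at index $m$, normalized so $g_m = 1$. Since $\cY^\perp$ is invariant under the forward weighted shift $D^*$ (with weights $\overline{w_{n+1}}$), setting $V_j := \prod_{i=1}^j w_{m+i}$ and $\rho_{k,j} := \prod_{i=1}^j (w_{k+i}/w_{m+i})$ yields
\[
v_j \;:=\; \frac{(D^*)^j g}{\overline{V_j}} \;=\; e_{m+j} + \eta_j \in \cY^\perp, \qquad \eta_j := \sum_{k > m} g_k \,\overline{\rho_{k,j}}\, e_{k+j}.
\]
Since $|w_n|\downarrow 0$, each factor of $\rho_{k,j}$ satisfies $|w_{k+i}/w_{m+i}| \leq 1$, and a telescoping argument shows $\sum_{i \geq 1}(\log|w_{m+i}| - \log|w_{k+i}|) = +\infty$ whenever $k > m$, whence $|\rho_{k,j}| \to 0$ for each fixed $k$. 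Dominated convergence against the summable $|g_k|^2$ then yields $\|\eta_j\| \to 0$.

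The finishing step uses the triangular structure $v_j = e_{m+j} + \eta_j$ with $\eta_j$ supported on indices $> m+j$, together with $\|\eta_j\| \to 0$, to conclude that $\overline{\mathrm{span}}\{v_j \mid j \geq 0\} \supseteq \overline{\mathrm{span}}\{e_k \mid k \geq m\}$; this gives $\cY^\perp \supseteq \overline{\mathrm{span}}\{e_k \mid k \geq m\}$ and hence $\cY \subseteq \mathrm{span}\{e_1,\dots,e_{m-1}\}$, contradicting $m \in S(\cY)$. I expect this last step to be the main obstacle: the cleanest formulation is that the synthesis map $(c_j) \mapsto \sum_j c_j v_j$ from $\ell_2$ into $\overline{\mathrm{span}}\{e_k \mid k \geq m\}$ should have dense range, equivalently that the only $\ell_2$-sequence $(\zeta_j)$ satisfying the functional equation $\zeta_j = -\sum_{r \geq 1}\overline{g_{m+r}}\,\rho_{m+r,j}\,\zeta_{r+j}$ is $\zeta \equiv 0$. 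Establishing this uniqueness requires quantitative control of both the $\ell_2$-decay of $(g_k)$ and the decay of $\rho_{k,j}$ in $j$ for fixed $k$, which is where the full strength of $|w_n| \downarrow 0$ must be deployed.
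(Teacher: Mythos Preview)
The paper does not prove this theorem; it is quoted from the literature (Donoghue, Nikol'ski\u\i, Yakubovich) and stated without argument. So there is no ``paper's own proof'' to compare against, and your proposal has to stand on its own.

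Your reduction via the support profile $S(\cY)$ and your treatment of the finite case are correct. The computation $v_j=e_{m+j}+\eta_j$ with $\|\eta_j\|\to 0$ is also correct: the telescoping for $|\rho_{k,j}|\to 0$ is valid, and dominated convergence applies.

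The gap is exactly where you flag it, and it is not a technicality --- it is the substance of the classical theorem. The natural attempt to finish is: if $h\perp v_j$ for all $j$ and $T_j^2:=\sum_{i\ge j}|h_{m+i}|^2$, then $|h_{m+j}|\le\|\eta_j\|\,T_{j+1}$ yields $T_{j+1}^2\ge T_j^2/(1+\|\eta_j\|^2)$; if $\sum_j\|\eta_j\|^2<\infty$ the infinite product converges, $T_j$ is bounded below, and $T_j\to 0$ forces $h=0$. But this summability fails in general. Since $g$ cannot equal $e_m$ (that would put $e_m\in\cY^\perp$, contradicting $m\in S(\cY)$), some $g_{m+r}\ne 0$; then
\[
|\rho_{m+r,j}|=\frac{|w_{m+j+1}\cdots w_{m+j+r}|}{|w_{m+1}\cdots w_{m+r}|}\ \ge\ c\,|w_{m+j+r}|^{\,r},
\]
so $\sum_j\|\eta_j\|^2\ge |g_{m+r}|^2\sum_j|\rho_{m+r,j}|^2$ diverges whenever $\sum_n|w_n|^{2r}=\infty$, e.g.\ for $w_n=1/\log(n+2)$. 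Thus $\|\eta_j\|\to 0$ alone is too weak to push your argument through, and your closing appeal to ``quantitative control of the decay of $\rho_{k,j}$'' does not supply the missing mechanism.

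The classical proofs exploit the multiplicative structure of the iterates $(D^*)^j g$ rather than only a norm bound on the tails; one standard route shows that every vector of infinite support is cyclic for $D$ via generating functions and a zeros-of-analytic-functions argument. Your outline reaches the doorstep of that step but does not enter.
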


The following result from~\cite{APTT} implies that Donoghue operators have almost-invariant half-spaces. 

\begin{thm}\label{donoghue-aihs}{ \rm\cite[Theorem~3.2]{APTT}}
Let $T$ be an operator on a Banach space $\cX$. Suppose that the following three conditions are satisfied.
\begin{enumerate}
\item\label{APTT-condition-1} $T$ has no eigenvalues;
\item\label{APTT-condition-2} the unbounded component of the resolvent set $\rho(T)$ contains $\{z\in\mathbb C\mid 0<\abs{z}<\varepsilon\}$ for some $\varepsilon>0$;
\item\label{APTT-condition-3} there exists $e\in\cX$ such that for each $k\in\mathbb N$, the vector $T^ke$ does not belong to the closed span of the set of vectors $\{T^ie\mid i\ne k\}$.
\end{enumerate}
Then $T$ has an almost-invariant half-space, $\cY$.
\end{thm}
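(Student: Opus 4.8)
The plan is to build $\cY$ as the closed linear span of a sparse sequence of resolvent vectors of $T$ evaluated at $e$. Write $x_n:=T^ne$ for $n\ge 0$. The resolvent hypothesis lets us form, for each $z$ in the punctured disc $\{z:0<\abs z<\varepsilon\}$, the vector $w_z:=(T-z)^{-1}e$, and the identity $T(T-z)^{-1}=I+z(T-z)^{-1}$ gives at once
\[
  Tw_z=zw_z+e .
\]
Hence for \emph{any} countable set $S$ contained in the punctured disc, the subspace $\cY:=\overline{\mathrm{span}}\{w_z:z\in S\}$ satisfies $T\cY\subseteq\cY+\mathbb Ce$, so $\cY$ is automatically almost-invariant for $T$ with defect at most $1$. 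Everything therefore reduces to choosing $S$ so that this $\cY$ is a half-space, i.e.\ so that it has both infinite dimension and infinite codimension.

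Infinite dimension is the easy half, and it is where the no-eigenvalue hypothesis is used. A nontrivial finite relation $\sum_j c_jw_{z_j}=0$, after multiplication by $\prod_j(T-z_j)$, turns into $p(T)e=0$ for some nonzero polynomial $p$; factoring $p(T)$ into factors $T-\lambda_i$, each injective because $T$ has no eigenvalues, forces $e=0$, which contradicts the minimality hypothesis (with $k=0$ it gives $e\ne 0$). So the $w_z$ are linearly independent and $\dim\cY=\infty$ for every infinite $S$. Two further facts will be needed. First, since $(x_n)$ is a minimal sequence in $\cX$ (the third hypothesis), it has a biorthogonal sequence $(\phi_n)\subseteq\cX^*$, with $\phi_n(x_m)=\delta_{nm}$. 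Second, starting from the Neumann expansion $w_z=-\sum_{n\ge 0}x_nz^{-n-1}$, valid for $\abs z>\|T\|$, and continuing the scalar analytic function $z\mapsto\phi_n(w_z)$ along the unbounded component of $\rho(T)$ (which by the second hypothesis reaches into the punctured disc), one gets
\[
  \phi_n(w_z)=-z^{-n-1}\qquad(0<\abs z<\varepsilon).
\]

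The real work is to force $\codim\cY=\infty$, and this dictates the choice of $S$. Take $S=\{z_k:k\ge 1\}$ with $z_k=A_k^{-1}$, where $1<A_1<A_2<\cdots\to\infty$ is chosen so rapidly increasing that (a) $\sum_kA_k^{-1}<\infty$, so $g(\zeta):=\prod_k(1-\zeta/A_k)$ is a genuine entire function vanishing exactly at the points $A_k$, and (b), writing $g(\zeta)=\sum_{m\ge 0}c_m\zeta^m$, one has $\sum_m\abs{c_m}\,\|\phi_{m+j}\|<\infty$ for every $j\ge 0$; both requirements are met by taking each $A_k$ large enough in terms of $A_1,\dots,A_{k-1}$ and of $\max_{i\le 2k}\|\phi_i\|$. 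For each $j\ge 1$ put
\[
  \psi_j:=\sum_{m\ge 0}c_m\,\phi_{m+j-1}\ \in\ \cX^* ,
\]
the series converging by (b). Using the displayed formula for $\phi_n(w_z)$,
\[
  \psi_j(w_{z_k})=\sum_{m\ge 0}c_m\bigl(-z_k^{-m-j}\bigr)=-z_k^{-j}\,g(z_k^{-1})=-A_k^{\,j}\,g(A_k)=0
\]
for all $j,k$, so every $\psi_j$ annihilates $\cY$. Since $c_0=g(0)=1\ne 0$, evaluating a putative relation $\sum_j\gamma_j\psi_j=0$ on the vectors $x_r$ and using biorthogonality forces $\gamma_1=\gamma_2=\cdots=0$, so the $\psi_j$ are linearly independent. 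Hence $\cY^\perp$ is infinite-dimensional, so $(\cX/\cY)^*\cong\cY^\perp$ is infinite-dimensional, so $\cX/\cY$ is infinite-dimensional, i.e.\ $\codim\cY=\infty$. Thus $\cY$ is an almost-invariant half-space for $T$.

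The main obstacle is exactly this last paragraph: producing the annihilating functionals $\psi_j$ forces the points $z_k$ to be chosen in coordination with the norms $\|\phi_n\|$, which are not under our control, and this is the subtlest part of the construction. By contrast, the defect bound, the linear independence of the $w_z$, and the estimates on the canonical product $g$ are routine.
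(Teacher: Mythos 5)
Your construction is essentially the one used in the cited source \cite{APTT}, which the paper itself only sketches in Remark~\ref{redundant-condition}: the half-space is the closed span of resolvent vectors $(T-z)^{-1}e$ over a sparse set of $z$'s near $0$, almost-invariance with defect $1$ follows from $T(T-z)^{-1}=I+z(T-z)^{-1}$, infinite dimension follows from injectivity of polynomials in $T$, and infinite codimension is forced by building annihilating functionals out of the biorthogonal system $(\phi_n)$ for $(T^ne)_n$ and an entire function vanishing at the reciprocals of the chosen $z_k$. The one place requiring care, which you flag yourself, is point (b): the inductive selection of $A_k$ must outrun the (uncontrolled) norms $\lVert\phi_n\rVert$; this is workable (e.g.\ choose $1/A_k$ smaller than $\delta_k 2^{-k}$ with $\delta_m$ defined from $\max_{n\le 2m}\lVert\phi_n\rVert$, and split $e_m(1/A_1,\dots)$ into the term $\prod_{k\le m}1/A_k$ plus the tail bounded by $C\sum_{k>m}1/A_k$), so there is no gap, only a step that deserves to be written out.
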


\begin{rem}\label{redundant-condition}
Condition~\eqref{APTT-condition-1} in Theorem~\ref{donoghue-aihs} is, in fact, redundant. To justify this claim, we need to outline the proof of this theorem. The half-space $\cY\subseteq\cX$ almost-invariant for $T$ is constructed as a closed span of the form $\overline{\mathrm{span}\,}\{h(\lambda,e)\mid\lambda\in\Lambda\}$ where $\Lambda$ is a certain infinite subset of $\rho(T)$ and $h(\lambda,e)=(\lambda I-T)^{-1}(e)$, $\lambda\in\Lambda$.  (We warn the reader that our definition of $\Lambda$ and $h(\lambda,e)$ differs slightly from the definition presented in~\cite{APTT}.)  Condition~\eqref{APTT-condition-1} is used in the following two steps:
\begin{enumerate}
	\item[(1)] the set $\{h(\lambda,e)\mid\lambda\in\Lambda\}$ is linearly independent, so $\cY$ is of infinite dimension;
	\item[(2)] a sequence of functionals $(f_k)_{k=1}^\infty$ annihilating $\cY$ is defined by assigning values to $f_k(T^{i}e)$ ($i\ge 0$) which requires  the sequence $(T^{i}e)_{i\ge 0}$ to be linearly independent.
\end{enumerate}
We claim that both~(1) and~(2) follow from the condition
\begin{enumerate}
	\item[(i')] $p(T)e\ne 0$ for any non-zero polynomial $p$,
\end{enumerate}
which, clearly, follows from  condition~\eqref{APTT-condition-3} of Theorem~\ref{donoghue-aihs}.

Indeed, it is evident that the linear independence of $(T^{i} e)_{i \ge 0}$ follows  from~(i'). Let us show that~(i') implies~(1). Assuming~(i'), we will show that the set $\{h(\lambda,e)\mid\lambda\in\Lambda\}$ is linearly independent for any choice of~$\Lambda$. Suppose that for some non-zero scalars $a_1,\dots,a_n$ and distinct $\lambda_1,\dots,\lambda_n$ in~$\Lambda$, we have
$$
a_1h(\lambda_1,e)+ a_2h(\lambda_2,e)+\dots+a_nh(\lambda_n,e)=0
$$
That is, 
$$
\sum_{i=1}^na_i(\lambda_i I-T)^{-1}(e)=0.
$$
Multiplying both sides of this equation by $(\lambda_1  I-T) (\lambda_2  I-T)\cdots(\lambda_n I-T)$, we obtain: $p(T)(e)=0$ for some (non-zero, by elementary algebra) polynomial~$p$. This contradicts~(i').
\end{rem}
\bigskip

There are other closely related questions which have already been examined.   For example, in the Hilbert space setting, Brown and Pearcy~\cite{BP1971} showed that if $T$ is a bounded linear operator on an infinite-dimensional Hilbert space $\cH$, then there exists an orthogonal projection $P$ of $\cH$ onto a half-space $\cL$ of $\cH$ so that $K= (I-P) T P$ is compact, i.e. so that $\cL$ is invariant for the operator $T - K$.   In fact, the work of Fillmore, Stampfli and Williams~\cite{FSW72} shows that one can do this by considering points in the left-essential spectrum of $T$.   A direct consequence of Voiculescu's non-commutative Weyl-von Neumann Theorem~\cite{Voi1976} is that given any bounded linear operator $T$ acting on an infinite-dimensional Hilbert space $\cH$, there exists a compact operator $K$ and a closed half-space $\cL$ of $\cH$ which is \term{reducing} for $T-K$;  that is, $\cL$ is invariant for both $T-K$ and $(T-K)^*$.

In Section~\ref{sec2} of this paper, we obtain almost-invariant subspaces for some classes of operators. First, we show that every quasinilpotent, triangularizable injective operator has an almost-invariant half-space (see Section~\ref{sec2} for the definition of a triangularizable operator). Next, we use this result to show amongst other things that every polynomially compact operator on a reflexive Banach space has an almost-invariant half-space.  Observe that the class of Donoghue operators which served as motivation for Theorem~\ref{donoghue-aihs} consists of compact operators. Finally, we study a special class of triangularizable operators: bitriangular operators. We show that every bitriangular operator is either of form $\lambda I+F$, for some scalar $\lambda$ and a finite-rank operator $F$, or it has a hyperinvariant half-space. In particular, every bitriangular operator has an invariant half-space.

Section~\ref{sec3} is concerned with the study of common almost-invariant half-spaces for algebras of operators. We extend a result from~\cite{Popov2010} by showing that if a norm-closed algebra $\cA$ of operators on a Banach space $\cX$ has an almost-invariant half-space which is complemented in $\cX$, then $\cA$ has a common invariant half-space. In particular, if $\cX$ is a Hilbert space then the existence of an almost-invariant half-space for a norm-closed algebra always implies the existence of an invariant half-space for that algebra. That is, if $P$ is a projection onto a half-space in $\cH$ such that $TP-PTP$ is of finite rank for all $T$ in the algebra then the algebra has an invariant half-space. We would like to bring to the reader's attention that the corresponding statement in which ``finite rank'' is replaced with ``compact'' is not true, as the following example shows:
\begin{eg} Let $\cD$ be the $C^*$-algebra of all diagonal (with respect to a fixed orthonormal basis) operators in $\bofh$. Let $\cA=\{D+K\mid D\in\cD,\ K\in\cK(\hilb)\}=\cD+\cK(\hilb)$. It is well known (see, e.g., \cite[Theorem 3.1.7]{Murphy}) that $\cA$ is a norm closed subalgebra of $\bofh$. Clearly, $TP-PTP\in\cK(\hilb)$ for all $T\in\cA$ and every projection $P\in\cD$. On the other hand, $\cA$ contains all compact operators in $\bofh$, hense it is dense in $\bofh$ in the weak operator topology. In particular, $\cA$ has no invariant subspaces.
\end{eg}

\smallskip

In 1972, Johnson and Parrot~\cite{JP1972} showed that if $\cW$ is an abelian von Neumann algebra acting on a Hilbert space $\cH$, and if $T$ is a bounded operator on $\hilb$ for which $T W - W T$ is compact for all $W \in \cW$, then there exists an operator $V$ in the commutant $\cW^\prime := \{ X \in \bofh: X W = W X \mbox{ for all } W \in \cW\}$ of $\cW$ and $K$ compact so that $T = V + K$.  In particular, if $\cW$ is a maximal abelian self-adjoint subalgebra (i.e. a masa) in $\bofh$, then $V \in \cW$.  The condition that $T W - W T$ be compact for all $W \in \cW$ is readily seen to be equivalent to the condition that $T P - P T$ be compact for all projections in $\cW$, since the linear span of the projections is norm-dense in $\cW$ by the Spectral Theorem for normal operators.
In Section~\ref{sec4} of the present work we replace this condition with the related condition that $T P - P T$ be finite-rank, or equivalently, that the range of $P$ be an almost-invariant subspace for $T$.  More specifically, we study operators with the following property: there exists a   masa  $\cD$ in $\cB(\hilb)$ such that for every projection $P\in\cD$, the space $P \hilb$ is almost-invariant under the given operator.  We show that every such operator can be written in the form $D+F$ where $D$ is in the masa and $F$ is of finite rank.   Unlike in the setting of the Johnson and Parrot result, the fact that the operator $T$ has finite-rank commutators $T M - M T$ with every element $M$ in the masa (and not just with the projections $P$ in the masa) is part of the conclusion of our result, not of the hypothesis.  In the case of a continuous masa, this decomposition is unique. We also study norm-closed algebras of operators leaving all spaces of the form $P \hilb$ (for $P$ in a masa $\cD$) almost-invariant. We show that every such algebra is contained in $\cD+\cF_k(\hilb)$ for some $k\in\bbN$, where $\cF_k(\hilb)$ denotes the set of operator on $\hilb$ of rank at most~$k$. We use these results to show that if an operator on $\hilb$ leaves every half-space of~$\hilb$ almost-invariant then it can be written as $\lambda I+F$ for some scalar $\lambda$ and  finite-rank operator~$F$.

In the last section of the paper, we introduce the notion of an almost-reducing subspace. Recall that a space $\cY\subseteq\hilb$ is reducing for a  collection $\cS$ of operators  in $\cB(\hilb)$ if $\cY$ and $\cY^\perp$ are both invariant for each $T \in \cS$.  We say that  $\cY$ is \term{almost-reducing} for $\cS$ if $\cY$ and $\cY^\perp$ are each almost-invariant for $\cS$.  We show that an analogue of our result from Section~\ref{sec3} for almost-reducing subspaces does not hold. There exist norm-closed algebras with many common almost-reducing subspaces that do not have common reducing subspaces. In fact, one can find a singly generated norm-closed algebra with plenty of almost-reducing half-spaces whose generator does not have reducing subspaces.

Throughout this work, $\cX$, $\cY$ and $\cZ$ will denote complex Banach spaces, while $\cH$, $\cK$ and $\cL$ will be reserved for (complex) Hilbert spaces.   
We shall consistently use the following notation:   for $\cX$ a Banach space, $\cB(\cX), \cK(\cX), \cF(\cX)$ and $\cF_k(\cX)$ are the algebras, respectively, of all bounded operators, the compact operators, the finite-rank operators, and those of finite-rank less than or equal to $k$ for a given $k \in \bbN$.   For an operator $T$, $\sigma(T)$, $\sigma_p(T)$ and $\mathrm{rank}\, T$ are, respectively,  the spectrum of $T$, the point spectrum of $T$ (i.e. the set of eigenvalues of $T$), and the rank of $T$.   For a subset $\cS$ of $\cX$, $\overline{\cS}$ is the closure of $\cS$ in the norm topology, and if $\cS = \{ x_n \}_{n=1}^\infty$ is a countable subset of $\cS$, then $[ x_n ]_n$ is the closed linear span of $\cS$.   If $ \{ \cY_i : i \in I\}$ is an arbitrary collection of subspaces of $\cX$, then $\bigvee_{i \in I} \cY_i$ is their closed linear span.
We call an operator $P \in \cB(\hilb)$ a projection if $P=P^2=P^*$. An operator (on a Banach space) that only satisfies $E=E^2$ will be referred to as idempotent.  Finally, if $\cY \subseteq \cX$ is a subspace, then $\mathrm{codim}_\cX\, \cY := \mathrm{dim}\, \cX/\cY$.   We shall also use the notation $\mathrm{codim}\, \cY$ if the space $\cX$ is understood.

The following (obvious) lemma will be used many times throughout the paper without further reference.

\begin{lem}
Let $\cX$ be a Banach space, $\cY$ a complemented subspace of $\cX$ and $E_\cY:\cX\to\cX$ be an idempotent with range~$\cY$. Then $\cY$ is almost-invariant under operator $T$ on $\cX$ if and only if the operator $(I-E_\cY)TE_\cY$ is of finite rank. Moreover, the defect of $\cY$ for $T$ is equal to $\mathrm{rank}\,\bigl((I-E_\cY)TE_\cY\bigr)$.
\end{lem}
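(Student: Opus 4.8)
The plan is to exploit the direct-sum decomposition $\cX = \cY \oplus \ker E_\cY$ supplied by the idempotent $E_\cY$, and to obtain the displayed formula by establishing two matching inequalities: that the defect of $\cY$ for $T$ is at most $\mathrm{rank}\,\bigl((I-E_\cY)TE_\cY\bigr)$, and that it is at least this quantity. The ``if and only if'' assertion then comes for free, since $\cY$ is almost-invariant for $T$ precisely when its defect is finite, which by these two bounds is equivalent to $(I-E_\cY)TE_\cY$ having finite rank.

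For the first inequality, suppose $(I-E_\cY)TE_\cY$ has finite rank and set $\cM_0 := \im\bigl((I-E_\cY)TE_\cY\bigr)$. Since every $y \in \cY$ satisfies $E_\cY y = y$, we may write $Ty = E_\cY Ty + (I-E_\cY)TE_\cY y \in \cY + \cM_0$, so $T\cY \subseteq \cY + \cM_0$; hence $\cY$ is almost-invariant for $T$, with defect at most $\dim\cM_0 = \mathrm{rank}\,\bigl((I-E_\cY)TE_\cY\bigr)$.

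For the reverse inequality, assume $\cY$ is almost-invariant for $T$ and pick a finite-dimensional subspace $\cM$ realizing the defect, so that $T\cY \subseteq \cY + \cM$ and $\dim\cM$ equals the defect of $\cY$ for $T$. Given an arbitrary $x \in \cX$, the vector $E_\cY x$ lies in $\cY$, hence $TE_\cY x = y' + m$ for some $y' \in \cY$ and $m \in \cM$; applying $I - E_\cY$ and using $(I-E_\cY)y' = 0$ gives $(I-E_\cY)TE_\cY x = (I-E_\cY)m$. Therefore $\im\bigl((I-E_\cY)TE_\cY\bigr) \subseteq (I-E_\cY)(\cM)$, a space of dimension at most $\dim\cM$, so $\mathrm{rank}\,\bigl((I-E_\cY)TE_\cY\bigr) \le \dim\cM$, which is the defect. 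Together with the first inequality this yields the asserted equality.

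I do not anticipate a genuine obstacle here; the argument is elementary bookkeeping with the idempotent $E_\cY$. The one point deserving a moment's care is that the defect is actually attained by a finite-dimensional $\cM$ of the stated dimension — immediate from its definition as a minimum — and that one loses nothing by replacing such an $\cM$ with the possibly smaller space $(I-E_\cY)(\cM)$, equivalently with $\cM_0$; it is precisely this observation that forces the two bounds to coincide.
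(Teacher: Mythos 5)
Your proof is correct. The paper does not actually supply a proof of this lemma — it is introduced as ``the following (obvious) lemma will be used many times throughout the paper without further reference'' — so there is nothing to compare against; your argument (decomposing $Ty = E_\cY Ty + (I-E_\cY)TE_\cY y$ for the upper bound, and pushing an arbitrary witness $\cM$ through $I - E_\cY$ for the lower bound) is exactly the standard elementary bookkeeping one would expect, and it establishes both the equivalence and the equality of the defect with $\mathrm{rank}\,\bigl((I-E_\cY)TE_\cY\bigr)$.
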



\section{Existence of almost-invariant half-spaces for single operators}\label{sec2}

In this section we will establish the existence of almost-invariant half-spaces for several classes of operators. 
The following lemma proves the simple fact that half-spaces exist in all Banach spaces.

\begin{lem}\label{hs-exist}
Every infinite-dimensional Banach space contains a half-space.
\end{lem}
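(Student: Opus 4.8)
I want to produce a closed subspace $\cY$ of the infinite-dimensional Banach space $\cX$ with $\dim \cY = \dim(\cX/\cY) = \infty$. The natural approach is to build $\cY$ with a Schauder basis of its own by a gliding-hump / biorthogonal construction, simultaneously keeping a separate infinite linearly independent set "outside" it so that the quotient is infinite-dimensional. Concretely, I would inductively construct a sequence of unit vectors $(x_n)_{n=1}^\infty$ in $\cX$ and norm-one functionals $(f_n)_{n=1}^\infty$ in $\cX^*$ that are biorthogonal, i.e. $f_n(x_m) = \delta_{nm}$; this is standard, using the Hahn--Banach theorem at each stage to pick $f_{n}$ annihilating the (finite-dimensional, hence closed) span of the previously chosen $x_1,\dots,x_{n-1}$ while not annihilating a newly chosen $x_n$ outside that span. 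Having such a biorthogonal system, split the index set $\bbN = A \sqcup B$ into two infinite sets and set $\cY = [\,x_n : n \in A\,]$, the closed linear span of the corresponding vectors.

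**Why $\cY$ is a half-space.** Infinite-dimensionality of $\cY$ is immediate since the $x_n$, $n\in A$, are linearly independent (they are part of a biorthogonal system). For the codimension, I need $\cX/\cY$ to be infinite-dimensional: it suffices to exhibit infinitely many vectors whose images in $\cX/\cY$ are linearly independent. Take $\{x_m : m \in B\}$. If a finite linear combination $\sum_{m \in B_0} c_m x_m$ lies in $\cY$, then applying $f_m$ for $m \in B_0$ — which annihilates $\cY$ because $f_m(x_n) = 0$ for all $n \in A$ and hence $f_m$ vanishes on $[\,x_n : n\in A\,]$ by continuity — forces every $c_m = 0$. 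So the cosets $x_m + \cY$, $m \in B$, are linearly independent in $\cX/\cY$, giving $\codim_\cX \cY = \infty$.

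**The main obstacle.** The one genuine point requiring care is the inductive construction of the biorthogonal system in an arbitrary (not necessarily separable, not necessarily reflexive) Banach space. At step $n$, having chosen $x_1,\dots,x_{n-1}$ and $f_1,\dots,f_{n-1}$, I must choose $x_n \notin \mathrm{span}\{x_1,\dots,x_{n-1}\}$ — possible since $\cX$ is infinite-dimensional — and then, since the finite-dimensional subspace $\cZ_{n-1} := \mathrm{span}\{x_1,\dots,x_{n-1}\}$ is closed and $x_n \notin \cZ_{n-1}$, Hahn--Banach provides a functional vanishing on $\cZ_{n-1}$ with $f_n(x_n) \neq 0$, which I normalize; a further small adjustment (subtracting suitable multiples of $x_1,\dots,x_{n-1}$ from $x_n$, or of $f_1,\dots,f_{n-1}$-duals) arranges the full biorthogonality $f_n(x_m) = \delta_{nm}$ for $m \le n$. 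Everything after that is the routine bookkeeping indicated above; there is no analytic subtlety beyond repeated use of Hahn--Banach and the continuity of the $f_m$.
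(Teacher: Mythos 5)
Your proof is correct, but it takes a genuinely different route from the paper. The paper invokes the (nontrivial) theorem that every infinite-dimensional Banach space contains a basic sequence $(x_n)$ --- this is Mazur's theorem, cited as \cite[Theorem~1.a.5]{LT77} --- and then sets $\cY = [x_{2n}]$; the basis property automatically provides the bounded coordinate functionals needed to see that $x_1, x_3, x_5, \dots$ stay linearly independent modulo $\cY$. You instead build a biorthogonal system $(x_n, f_n)$ by hand, using only an inductive Hahn--Banach argument (at each stage pick $x_n$ in $\bigcap_{i<n}\ker f_i$ but outside the finite-dimensional, hence closed, span of $x_1,\dots,x_{n-1}$, then separate with $f_n$); you then split into two infinite halves and use the $f_m$, $m\in B$, as the detectors of linear independence in the quotient. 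Your argument is more elementary in that it avoids Mazur's basic-sequence theorem entirely, at the cost of constructing the dual functionals explicitly rather than having them handed over by basicity; the paper's argument is shorter once the cited theorem is granted, and yields the extra (unused) fact that $\cY$ has a Schauder basis. Both are valid; the crux in each case --- some collection of bounded functionals that vanish on $\cY$ yet separate the "other half" of the sequence --- is the same.
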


\begin{proof}
Let $(x_n)$ be a basic sequence in $\cX$ (see, e.g., \cite[Theorem~1.a.5]{LT77}). Put $\cY=[x_{2n}]_{n=1}^\infty$. We claim that $\cY$ is a half-space. 

It is clear that $\dim \, \cY=\infty$. To see that $\codim\, \cY=\infty$, observe first that $(x_{2n})$ is also a basic sequence, so that every member of $\cY$ can be written as $\sum_{i=n}^\infty a_nx_{2n}$. Since $(x_n)$ is basic, $\cY$ contains no nonzero elements of form $\sum_{n=1}^m b_nx_{2n-1}$, where $m\in\mathbb N$. It follows that $\codim_{[x_n]}\cY=\infty$. Hence, $\codim_\cX \cY=\infty$.
\end{proof}


We remind the reader that an operator $T$ on a Banach space $\cX$ is called \term{triangularizable} if there is a chain $\mathcal C$ of subspaces of $\cX$ that is maximal with respect to inclusion and has the property that each member $\cY$ of $\mathcal C$ is $T$-invariant (see, e.g., \cite[Definition~7.1.1]{RR00}). We point out that the maximality of the chain $\cC$ implies that $\cC$ is complete; that is, it is closed under arbitrary intersections, as well as under closed spans of its members.

Our first goal is to show that all injective, triangularizable quasinilpotent operators admit an almost-invariant half-space. We will start with two simple lemmas.


\begin{lem}\label{minimal-sequences}
Let $(x_n)$ be a sequence in a Banach space $\cX$. Then $x_n\not\in[x_k]_{k\ne n}$ holds for all $n\in\mathbb N$ if and only if $x_n\not\in[x_k]_{k>n}$ holds for all $n\in\mathbb N$.
\end{lem}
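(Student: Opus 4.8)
The forward implication is immediate, since $[x_k]_{k>n}\subseteq[x_k]_{k\ne n}$, so the plan is to concentrate on the converse. Assume $x_n\notin[x_k]_{k>n}$ for every $n$, fix an arbitrary $n$, and argue by contradiction that $x_n\notin[x_k]_{k\ne n}$. The structural observation that makes the argument work is that $[x_k]_{k\ne n}$ is contained in $F+W$, where $F:=\mathrm{span}\{x_1,\dots,x_{n-1}\}$ is finite-dimensional (and empty, hence $\{0\}$, when $n=1$) and $W:=[x_k]_{k>n}$ is closed. Since the sum of a finite-dimensional subspace and a closed subspace of a Banach space is again closed, $F+W$ is a closed subspace containing $\mathrm{span}\{x_k\mid k\ne n\}$, and therefore it contains the closure $[x_k]_{k\ne n}$ of that span.

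Granting this inclusion, suppose $x_n\in[x_k]_{k\ne n}\subseteq F+W$. Then I may write $x_n=\sum_{k=1}^{n-1}c_kx_k+w$ with $w\in W$. If all the coefficients $c_k$ vanish, then $x_n=w\in[x_k]_{k>n}$, contradicting the hypothesis at $n$. Otherwise, let $m$ be the least index with $c_m\ne 0$; then $m<n$ and $c_mx_m=x_n-w-\sum_{k=m+1}^{n-1}c_kx_k$. Each term on the right lies in $[x_k]_{k>m}$: the term $x_n$ because $n>m$; the term $w$ because $w\in W=[x_k]_{k>n}\subseteq[x_k]_{k>m}$; and the remaining terms because their indices exceed $m$. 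Dividing by $c_m\ne 0$ gives $x_m\in[x_k]_{k>m}$, again contradicting the hypothesis. Either way we reach a contradiction, so $x_n\notin[x_k]_{k\ne n}$, and since $n$ was arbitrary we are done.

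The only ingredient that is not purely formal is the fact that $F+W$ is closed when $F$ is finite-dimensional and $W$ is closed; this is standard — for instance, pass to the quotient $\cX/W$, where the image of $F$ is a finite-dimensional, hence closed, subspace, and pull back along the (continuous) quotient map — so I do not expect any genuine obstacle here. I also note that one need not separately establish that $(x_n)$ is linearly independent: the argument above never uses it, although the same ``least index'' manoeuvre shows that the hypothesis does force linear independence, which may be worth recording as a remark.
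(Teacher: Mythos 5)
Your proof is correct and takes essentially the same route as the paper: you use the fact that the sum of a finite-dimensional subspace and a closed subspace is closed to identify $[x_k]_{k\ne n}$ inside $\mathrm{span}\{x_1,\dots,x_{n-1}\}+[x_k]_{k>n}$, and then apply the ``least nonzero index'' argument to derive a contradiction. The only cosmetic difference is that the paper folds your two cases into one by setting the coefficient of $x_n$ to $-1$ before locating the least nonzero coefficient.
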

\begin{proof}
The ``only if'' part of the lemma is trivial. Let us establish the ``if'' part. Let $(x_n)$ be such that $x_n\not\in[x_k]_{k>n}$ holds for all $n\in\mathbb N$. Observe that the space $\mathrm{span}\{x_k\}_{k<n}+[x_k]_{k>n}$ is dense in $[x_k]_{k\ne n}$. Since the sum of a finite-dimensional space and a closed space is again closed, this shows that $[x_k]_{k\ne n}=\mathrm{span}\{x_k\}_{k<n}+[x_k]_{k>n}$.

Suppose that $x_n\in[x_k]_{k\ne n}$ for some $n\in\mathbb N$. Then we can write $x_n=a_1x_1+\dots+a_{n-1}x_{n-1}+u$ where $u\in[x_k]_{k> n}$. Put $a_n=-1$ and let $i$ be the smallest number such that $a_i\ne 0$. Then $x_i=-\frac{a_{i+1}}{a_i}x_{i+1}-\dots-\frac{a_{n}}{a_i}x_{n}-\frac{1}{a_i}u$. This shows that $x_i\in[x_k]_{k>i}$, a contradiction.
\end{proof}


\begin{lem}\label{qn-quotient}
Let $T$ be a quasinilpotent operator on a Banach space $\cX$.   If $\cZ \subseteq \cY$ are $T$-invariant subspaces of $\cX$ and $\dim(\cY/\cZ)=1$ then $T\cY \subseteq \cZ$.
\end{lem}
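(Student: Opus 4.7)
My plan is to reduce the statement to a spectral radius computation on the one-dimensional quotient $\cY/\cZ$. Since $\cY$ and $\cZ$ are both closed and $T$-invariant, $T$ induces a bounded linear operator $\widetilde T$ on the quotient space $\cY/\cZ$ via $\widetilde T(y+\cZ)=Ty+\cZ$. The hypothesis says that $\cY/\cZ$ is one-dimensional, so $\widetilde T$ is multiplication by some scalar $\lambda\in\bbC$, and the conclusion $T\cY\subseteq\cZ$ is equivalent to $\lambda=0$.

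The key step is to show that the quasinilpotence of $T$ passes to $\widetilde T$. I would argue this directly by comparing norms of iterates: for any $y\in\cY$, the quotient norm gives
\[
\bignorm{\widetilde T^n(y+\cZ)}=\bignorm{T^ny+\cZ}\le\|T^ny\|\le\|T^n\|\,\|y\|,
\]
and then taking infimum over representatives shows $\|\widetilde T^n\|\le\|T^n\|$. Hence the spectral radius of $\widetilde T$ is bounded by that of $T$, which is $0$. Consequently $\lambda=0$, and therefore $Ty\in\cZ$ for every $y\in\cY$.

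Alternatively (and equivalently) one can avoid passing to the quotient explicitly: pick any $y\in\cY\setminus\cZ$ and write $Ty=\lambda y+z$ with $z\in\cZ$; a short induction gives $T^ny=\lambda^n y+z_n$ with $z_n\in\cZ$, so
\[
\|T^ny\|\ge\mathrm{dist}(T^ny,\cZ)=|\lambda|^n\,\mathrm{dist}(y,\cZ).
\]
Since $\mathrm{dist}(y,\cZ)>0$, taking $n$th roots and letting $n\to\infty$ would force the spectral radius of $T$ to be at least $|\lambda|$, so $\lambda=0$ again.

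I do not foresee a genuine obstacle: the only thing one must be careful about is that $\cZ$ is closed (so the quotient norm and the distance function are well-behaved), but this is part of the standing convention in the paper. The lemma is essentially the observation that a quasinilpotent operator cannot have a ``proper eigenvalue'' on any one-dimensional invariant slice, and both variants above make this precise in one or two lines.
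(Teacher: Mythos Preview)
Your proposal is correct and essentially coincides with the paper's proof. The paper picks $y\in\cY\setminus\cZ$, writes $Ty=\alpha y+z$, and uses a bounded idempotent $E:\cY\to\cY$ with range $[y]$ and kernel $\cZ$ to obtain $\|T^n\|\ge\frac{1}{\|E\|}|\alpha|^n$; this is exactly your second variant with $\mathrm{dist}(\,\cdot\,,\cZ)$ playing the role of $\tfrac{1}{\|E\|}\|E(\cdot)\|$, and your first (quotient) variant is just the same computation rephrased on $\cY/\cZ$.
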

\begin{proof}
Write $\cY$ as a direct sum $\cY=\cZ\oplus[y]$ for some $y\in\cY\setminus\cZ$ with $\left\|y\right\|=1$. Let $E:\cY\to\cY$ be the idempotent with range $[y]$ and null space~$\cZ$. If $T\cY\not\subseteq\cZ$ then there exists a non-zero $\alpha\in\mathbb C$ such that $Ty=\alpha y+z$ for some $z\in\cZ$. Then, since $T\cZ\subseteq\cZ$, we obtain $E T^n y=\alpha^n y$, for all $n\in\mathbb N$. However, this implies that $\left\|T^n\right\|\ge\frac{1}{\left\|E\right\|}\left\|ET^n\right\|\ge\frac{1}{\left\|E\right\|}\abs{\alpha}^n$, so that the spectral radius $r(T)$ of $T$ satisfies the inequality $r(T)\ge\abs{\alpha}>0$, a contradiction.
\end{proof}


\begin{thm}\label{qn-aihs} 
Every triangularizable quasinilpotent injective operator $T$ on a Banach space $\cX$ has an almost-invariant half-space.
\end{thm}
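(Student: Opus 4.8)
The plan is to split into two cases according to whether $T$ already has an invariant half-space, and in the remaining case to extract from a triangularizing chain a ``staircase'' of finite-codimensional invariant subspaces which makes the hypotheses of Theorem~\ref{donoghue-aihs} verifiable. First note that, since $T$ is injective and quasinilpotent, $\sigma_p(T)=\emptyset$: an eigenvalue would lie in $\sigma(T)=\{0\}$, and $0$ is not an eigenvalue by injectivity. Hence $T$ has no finite-dimensional invariant subspace, so every nonzero member of a (complete) triangularizing chain $\cC$ is infinite-dimensional. If some $\cY\in\cC$ has infinite codimension, then $\cY$ is an invariant---hence almost-invariant---half-space and we are done. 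So from now on assume every proper member of $\cC$ is infinite-dimensional and of finite codimension.

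Now I extract the staircase. Because $\cC$ is maximal and $\mathbb{C}$ is algebraically closed, each ``jump'' in $\cC$ has dimension at most one: if $\cY\in\cC$ and $\cY_-:=\bigvee\{\cZ\in\cC:\cZ\subsetneq\cY\}$ had $\dim(\cY/\cY_-)\ge 2$, the operator induced by $T$ on $\cY/\cY_-$ would have an eigenvector, whose preimage would be a $T$-invariant subspace strictly between $\cY_-$ and $\cY$ comparable to all of $\cC$, contradicting maximality. Combining this with completeness of $\cC$ and the finite-codimension assumption, one checks that $\cC$ contains a strictly decreasing sequence $\cX=\cY_0\supsetneq\cY_1\supsetneq\cY_2\supsetneq\cdots$ with $\codim\cY_k=k$ and $\bigcap_k\cY_k=\{0\}$ (an intersection of larger codimension would again be an invariant half-space, and a finite-dimensional one is excluded). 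Since $\dim(\cY_{k-1}/\cY_k)=1$, Lemma~\ref{qn-quotient} gives $T\cY_{k-1}\subseteq\cY_k$ for all $k\ge1$, hence $T^k\cX\subseteq\cY_k$. Each $\cY_k$, being finite-codimensional, is complemented: choosing any $g_k\in\cY_{k-1}\setminus\cY_k$ we get $\cX=\mathrm{span}\{g_1,\dots,g_k\}\oplus\cY_k$; let $P_k$ be the corresponding bounded idempotent and write $P_k-P_{k-1}=\psi_k(\cdot)\,g_k$ with $\psi_k\in\cX^\ast$. Then $\psi_k(g_j)=\delta_{kj}$ and $\bigcap_{j=1}^n\ker\psi_j=\cY_n$ for every $n$, so $\bigcap_j\ker\psi_j=\{0\}$.

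Next I verify condition~\eqref{APTT-condition-3} of Theorem~\ref{donoghue-aihs} for an arbitrary nonzero $e\in\cX$. For $v\neq0$ set $\ell(v)=\min\{j:\psi_j(v)\neq0\}$, well defined since $\bigcap_j\ker\psi_j=\{0\}$. As $\psi_j(v)=0$ for $j<\ell(v)$ forces $v\in\cY_{\ell(v)-1}$, we get $Tv\in T\cY_{\ell(v)-1}\subseteq\cY_{\ell(v)}$, so $\psi_j(Tv)=0$ for $j\le\ell(v)$; since $Tv\neq0$ this gives $\ell(Tv)>\ell(v)$, whence $\ell(e)<\ell(Te)<\ell(T^2e)<\cdots$. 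For each $n$ and each $i>n$ we have $\ell(T^ie)\ge\ell(T^{n+1}e)>\ell(T^ne)$, so the bounded functional $\psi_{\ell(T^ne)}$ annihilates $[T^ie]_{i>n}$ while $\psi_{\ell(T^ne)}(T^ne)\neq0$; hence $T^ne\notin[T^ie]_{i>n}$ for every $n$. By Lemma~\ref{minimal-sequences} this is condition~\eqref{APTT-condition-3}. Condition~\eqref{APTT-condition-2} holds because $\rho(T)=\mathbb{C}\setminus\{0\}$ is connected, and condition~\eqref{APTT-condition-1} holds directly (and is in any case redundant, Remark~\ref{redundant-condition}). Theorem~\ref{donoghue-aihs} now yields an almost-invariant half-space for $T$.

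I expect the main obstacle to be the combinatorial bookkeeping of the second paragraph---verifying that the chain really descends through every finite codimension by one-dimensional jumps with trivial intersection, and that the idempotents $P_k-P_{k-1}$ supply honest ``leading-coordinate'' functionals $\psi_k$---after which the reduction to Theorem~\ref{donoghue-aihs} is essentially formal.
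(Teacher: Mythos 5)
Your proof is correct and follows essentially the same strategy as the paper's: reduce to the case where every member of the triangularizing chain has finite codimension, use Lemma~\ref{qn-quotient} to show that the $T$-orbit descends strictly through a nested sequence of invariant subspaces drawn from $\cC$, and then invoke Lemma~\ref{minimal-sequences} together with Theorem~\ref{donoghue-aihs}. The differences are only cosmetic: you build the full decreasing chain with codimensions $0,1,2,\dots$ up front and introduce explicit ``coordinate'' functionals $\psi_k$ so that \emph{every} nonzero vector $e$ works, whereas the paper selects the nested subspaces adaptively (via completeness of $\cC$, taking the smallest member containing $T^k e_1$) for one well-chosen vector $e_1$.
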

\begin{proof}
Let $\mathcal C$ be a triangularizing chain of invariant subspaces for $T$.  Clearly, we may assume without loss of generality that no member of $\mathcal C$ is a half-space. Also, it follows from the injectivity of $T$ that $\mathcal C$ has no elements having finite dimension, as the restriction of $T$ to such a subspace would be nilpotent, thereby contradicting the injectivity of $T$.  Therefore, every member of $\mathcal C$ is of finite codimension in~$\cX$.

Following \cite[Definition~7.1.6]{RR00}, for each $\cY \in\mathcal C$, we define $\cY_{-}=\bigvee\{\cZ \in\mathcal C\mid\cZ \subseteq\cY,\cZ \ne\cY \}$. It follows from maximality of $\mathcal C$ that for each $\cY \in\mathcal C$, either $\cY_{-}=\cY $ or $\dim(\cY /\cY_{-})=1$ (see \cite[Theorem~7.1.9(iii)]{RR00}). However, the condition $\cY_{-}=\cY$ implies that $\mathcal C$ contains members having infinite codimension in $\cX$. So, $\dim(\cY/\cY_{-})=1$ for all $\cY \in\mathcal C$.

Let $\cY_1$ be an arbitrary nonzero element of $\mathcal C$. Pick a non-zero vector $e_1\in \cY_1\setminus \cY_{1-}$. Denote $e_2=Te_1$. By Lemma~\ref{qn-quotient}, $e_2\in \cY_{1-}$. Also, since $\ker T=\{0\}$, $e_2\ne 0$.

Let $\cY_2=\bigcap\{\cY \in\mathcal C\mid e_2\in \cY \}$. Since $\mathcal C$ is a complete chain, $\cY_2\in\mathcal C$. Clearly, $\cY_2\ne\{0\}$ and $\cY_2\subsetneq \cY_1$. Also, $\cY_{2-}\ne \cY_2$. In particular, $e_2\in \cY_2\setminus \cY_{2-}$.

Denote $e_3=Te_2$. Again, $e_3\ne 0$ and $e_3\in \cY_{2-}$. Put $\cY_3=\bigcap\{ \cY \in\mathcal C\mid e_3\in \cY\}$. Then $\cY_3\in\mathcal C$ is such that $\cY _3\ne\{0\}$, $\cY_3\subsetneq \cY_2$, and $e_3\in \cY_3\setminus \cY_{3-}$.

Continuing inductively, we obtain a strictly decreasing chain of subspaces $\cY_1\supsetneq \cY_2\supsetneq \cY_3\supsetneq \cY_4\supsetneq\dots$ in $\cC$ such that if $e_k=T^{k-1}e_1$ then $e_k\in \cY_k\setminus \cY_{k+1}$. The sequence $(e_k)$, in particular, satisfies the condition that $e_i\not\in[e_j]_{j>i}$ for all $i\in\mathbb N$. By Lemma~\ref{minimal-sequences}, $e_i\not\in[e_j]_{j\ne i}$ for all $i\in\mathbb N$. It follows from Theorem~\ref{donoghue-aihs} that $T$ has an almost-invariant half-space.
\end{proof}


If the underlying Banach space in Theorem~\ref{qn-aihs} is reflexive then we can drop the injectivity assumption from the hypothesis of the theorem.

\begin{thm}\label{qn-aihs-reflexive} 
Every triangularizable quasinilpotent operator on a reflexive Banach space has an almost-invariant half-space.
\end{thm}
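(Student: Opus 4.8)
The plan is to reduce Theorem~\ref{qn-aihs-reflexive} to the already-established injective case, Theorem~\ref{qn-aihs}, by quotienting out the kernel of a suitable power of $T$. First I would observe that since $T$ is triangularizable, the chain $\cC$ of $T$-invariant subspaces witnessing this contains $\ker T$ (indeed $\ker T$ is invariant, and by completeness/maximality of $\cC$ one can pass to $\bigcap\{\cY\in\cC\mid \ker T\subseteq \cY\}$ and argue it equals $\ker T$, or more simply work with the largest invariant subspace on which $T$ acts as a quasinilpotent with trivial kernel). If $\ker T=\{0\}$ we are done by Theorem~\ref{qn-aihs}, so assume $\ker T\ne\{0\}$.

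\smallskip

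The key step is a case split on the dimension of $\ker T$. If $\ker T$ is finite-dimensional, then I would pass to the quotient $\cX_1 := \cX/\ker T$, which is again reflexive (a quotient of a reflexive space is reflexive), with the induced operator $\widetilde T$, which is again quasinilpotent and triangularizable (the image of $\cC$ under the quotient map is a maximal $\widetilde T$-invariant chain, using that $\ker T$ lies in $\cC$). Iterating, one either reaches an injective induced operator after finitely many steps and invokes Theorem~\ref{qn-aihs} to get an almost-invariant half-space $\widetilde\cY$ of the quotient, which pulls back to an almost-invariant half-space of $\cX$ (the preimage of a half-space under a quotient map by a finite-dimensional subspace is a half-space, and preimages of almost-invariant subspaces are almost-invariant since the extra finite-dimensional "defect" only grows by a finite amount); or the process never terminates, meaning $\ker T^n$ is a strictly increasing sequence of finite-dimensional spaces, in which case $\cZ := \bigvee_n \ker T^n$ is an infinite-dimensional $T$-invariant subspace on which... actually here I would instead directly handle this. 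If $\ker T$ is infinite-dimensional, then I would split on its codimension: if $\codim \ker T = \infty$ then $\ker T$ itself is a half-space, and it is trivially invariant (hence almost-invariant) for $T$, so we are done; if $\codim \ker T < \infty$, then $T$ has finite rank, and any finite-codimensional (equivalently here, any) half-space works since $T\cY \subseteq \cX$ and $\overline{T\cX}$ is finite-dimensional, so $T\cY \subseteq \cY + \overline{T\cX}$ makes every half-space almost-invariant; half-spaces exist by Lemma~\ref{hs-exist}.

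\smallskip

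So the remaining case to organize carefully is $\ker T$ finite-dimensional (and nonzero). Here the cleanest route is: let $\cN = \bigvee_{n\ge 1}\ker T^n$ be the generalized kernel. Either $\cN$ is infinite-dimensional, in which case $\cN$ is a $T$-invariant subspace and I would look at $\codim\,\cN$ --- if infinite, $\cN$ is an invariant half-space and we are done; if finite, then $\cX/\cN$ is finite-dimensional, so again $T$ is "small modulo $\cN$" and... hmm, this needs $\cN$ itself to carry the half-space. Let me instead commit to the iterated-quotient argument as the main line: pass successively to $\cX/\ker T$, then quotient its induced operator by its kernel, etc. Each induced space is reflexive, each induced operator quasinilpotent and triangularizable, and the kernels are finite-dimensional at each stage (otherwise we fall into one of the easy cases above applied to that stage's operator). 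If some stage gives an injective operator, Theorem~\ref{qn-aihs} plus pullback finishes. If no stage does, then writing $q_n : \cX \to \cX_n$ for the composite quotient map, the spaces $\ker q_n = q_{n-1}^{-1}(\ker\widetilde T_{n-1})$ form a strictly increasing chain of finite-dimensional $T$-invariant subspaces; their union $\cN$ is infinite-dimensional and $T$-invariant, and on $\cN$ the operator $T$ is "locally nilpotent" (each vector is killed by a power), so the restriction $T|_{\cN}$ has trivial... no, that's still not injective. At this point I would simply argue that $\cN$ has infinite dimension and invoke the codimension dichotomy on $\cN$ one more time, or --- cleaner --- note that within $\cN$ one can build, exactly as in the proof of Theorem~\ref{qn-aihs} using the chain $\cC$ restricted to $\cN$, a linearly independent sequence $(e_k)$ with $e_k \notin [e_j]_{j>k}$, by picking $e_1$ in a one-codimensional-jump of the chain inside $\cN$ and... but quasinilpotence alone (not injectivity) still forces $Te_1$ into the next member down by Lemma~\ref{qn-quotient}, and $Te_1$ could be $0$.

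\smallskip

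I expect the main obstacle to be precisely this terminal case: producing the Donoghue-type linearly independent orbit when $T$ is not injective. The honest fix, which I would adopt, is that the iterated quotient \emph{does} terminate: if $\ker \widetilde T_n$ were nonzero and finite-dimensional for every $n$, the increasing chain $(\ker q_n)$ of finite-dimensional invariant subspaces has infinite-dimensional union $\cN$; now $\cN$ is a closed?--- not necessarily closed, but its closure $\overline{\cN}$ is $T$-invariant, infinite-dimensional, and if it is not all of $\cX$ and has infinite codimension it is an invariant half-space, done; if it has finite codimension, then $\cX = \overline{\cN} + \cG$ for finite-dimensional $\cG$, and I claim $\overline{\cN}$ itself, being an infinite-dimensional reflexive space with a triangularizable (restriction is triangularizable via $\cC \cap \overline\cN$) quasinilpotent operator on it, can be handled by induction on... this risks circularity. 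So in the write-up I would instead present the argument in the order: (1) dispose of $\codim \ker T = \infty$ (invariant half-space $\ker T$) and $\rank T < \infty$ (every half-space almost-invariant) up front; (2) in the remaining regime $\ker T$ finite-dimensional, pass to $\cX/\ker T$ and \emph{carefully verify} that this regime cannot persist forever because the generalized-kernel union, once infinite-dimensional, itself falls under the easy cases applied to $\overline{\cN}$ with operator $T|_{\overline\cN}$ (an infinite-dimensional reflexive space, quasinilpotent triangularizable operator) --- and if $T|_{\overline\cN}$ is injective invoke Theorem~\ref{qn-aihs}, else recurse, with the recursion grounded because $\dim$ strictly drops into a half-space or $\overline\cN \subsetneq \cX$ eventually forces one of the base cases. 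The subtle point deserving the most care is confirming that "preimage under a finite-dimensional quotient of an almost-invariant half-space is an almost-invariant half-space," and that reflexivity and triangularizability genuinely descend to the quotient; these I would state as explicit small lemmas before the main proof.
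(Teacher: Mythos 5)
Your approach is genuinely different from the paper's, and it contains a real gap at exactly the point you identify as the ``terminal case.'' The paper's proof does not iterate quotients by kernels: after disposing of the case of infinite-dimensional $\ker T$, it analyzes the \emph{decreasing} chain of ranges $\cY_n=\overline{T^n\cX}$, extracts a Donoghue-type sequence $(T^nz)$ when the codimensions of the $\cY_n$ have not yet stabilized, and in the remaining case concludes that (after restricting) $T$ may be assumed to have dense range --- at which point it passes to the adjoint $T^*$, which is then injective, quasinilpotent, and triangularizable (via the annihilator chain), applies Theorem~\ref{qn-aihs} to $T^*$, and uses reflexivity in the form $T=T^{**}$ together with \cite[Proposition~1.7]{APTT} to transfer the half-space back. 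That dualization step is the essential idea you never reach.

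The gap in your plan is concrete and unavoidable without a new idea: when $\ker T$ is finite-dimensional (and nonzero), every $\ker T^n$ may stay finite-dimensional while $\cN:=\overline{\bigvee_n\ker T^n}$ has \emph{finite} codimension, and then none of your cases apply and your recursion does not terminate. The Donoghue operator itself --- the very operator motivating this line of work --- is exactly of this type: $\dim\ker D^n=n$ for each $n$ and $\cN=\ell_2$, so the iterated quotient never becomes injective and $\cN$ is not a half-space. Restricting to $\overline{\cN}$ doesn't help (it contains $\ker T$, so $T|_{\overline{\cN}}$ is still non-injective and you are back where you started), and your fallback of ``a recursion grounded because the dimension drops'' has nowhere to drop to. A secondary, smaller issue: $\ker T$ need \emph{not} belong to a given triangularizing chain $\cC$ (already in $\bbC^3$ with two Jordan blocks of sizes $1$ and $2$ one can choose a maximal chain avoiding the two-dimensional kernel), so the assertion that the image of $\cC$ under the quotient map is automatically a triangularizing chain for $\widetilde T$ needs a different justification --- though triangularizability of quotients does hold and is not the main obstruction. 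Your auxiliary observations (quotients of reflexive spaces are reflexive; preimages of almost-invariant half-spaces under finite-dimensional quotients are almost-invariant half-spaces; $\ker T$ of infinite dimension gives an invariant half-space; $\ker T$ of finite codimension forces $T$ finite rank, making every half-space almost-invariant) are all correct and would fit naturally into a proof, but they do not close the Donoghue-type case.
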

\begin{proof}
Let $T\in\cB(\cX)$ be a triangularizable quasinilpotent operator. Clearly, we may assume that the nullity of $T$ is  finite, for otherwise, any half-space of $\ker\, T$ is an invariant half-space for $T$. First, we will show that, without loss of generality, we may assume that $T$ has dense range.

Denote $\cY_n=\overline{T^n\cX}$, $\cY_0= \cX$. Since each $\cY_n$ is $T$-invariant, we may assume that each $\cY_n$ is of finite codimension in~$\cX$. Write $\cX=\cY_1\oplus\cZ$ for some finite-dimensional space~$\cZ$. We have:
$$
\cY_2=\overline{T^2 \cX}=\overline{T\cY_1},\mbox{ so}
$$
$$
\cY_1=\overline{T \cX}=\overline{T\cY_1+T\cZ}=\overline{T\cY_1}+T\cZ=\cY_2+T\cZ,
$$
since $T\cZ$ is finite-dimensional. Similarly, we get
$$
\cY_{n}=\cY_{n+1}+T^{n}\cZ
$$
for all $n\in\bbN$.

Now write the space $T^n\cZ$ as $W_{n1}\oplus W_{n2}$ where $W_{n1}\subseteq\cY_{n+1}$ and $W_{n2}\cap\cY_{n+1}=\{0\}$. Then $T^{n+1}\cZ=T(W_{n1}\oplus W_{n2})=TW_{n1}+TW_{n2}$, with $TW_{n1}\subseteq\cY_{n+2}$. It follows that $\mathrm{codim}_{\cY_n}Y_{n+1}$ is a decreasing sequence. Hence, it must stabilize at some~$n$. Restricting $T$ to $\cY_n$, we may assume without loss of generality that this happens for $n=1$. Thus
\[
\cY_{n}=\cY_{n+1}\oplus T^{n}\cZ,\quad\dim T^n\cZ=\dim\,  \cZ
\]
for all $n\in\bbN$.

If $\cZ=0$, then $T$ has dense range. Otherwise, pick a non-zero $z\in\cZ$. Then $T^nz\in T^n\cZ\subseteq\cY_n\setminus\cY_{n+1}$. It follows that the sequence $(T^nz)_{n\ge 1}$ has the property that $T^nz\not\in[T^iz]_{i>n}$. Since $T$ is quasinilpotent, we get by Lemma~\ref{minimal-sequences}, Theorem~\ref{donoghue-aihs} and Remark~\ref{redundant-condition} that $T$ has an almost-invariant half-space.

So, we may assume that $T$ has dense range. Then, clearly, $T^{*}$ is injective and quasinilpotent. Also, if $(\cM_\alpha)$ is a triangularizing chain for $T$ then $(\cM^\perp_\alpha)$ is a triangularizing chain for $T^*$. Thus, $T^*$ is triangularizable. By Theorem~\ref{qn-aihs}, $T^*$ has an almost-invariant half-space. Hence, by \cite[Proposition~1.7]{APTT}, so does $T=T^{**}$.
\end{proof}


\begin{rem}
The affirmative answer to the Invariant Subspace Problem for quasinilpotent operators on Hilbert spaces would imply that every quasiniplotent operator in $\cB(\hilb)$ is triangularizable. Hence, Theorem~\ref{qn-aihs-reflexive} shows that, when we restrict our attention to the quasinilpotent operators on Hilbert spaces, the problem of the existence of almost-invariant half-spaces is a \emph{weakening} of the Invariant Subspace Problem. The same remark can be made about the reflexive spaces.
\end{rem}


Our next goal is finding almost-invariant half-spaces for polynomially compact operators. Recall that an operator $T$ is called \term{polynomially compact} if there is a non-zero polynomial $p$ such that $p(T)$ is compact. 

%
%
%
%
%


\begin{lem}\label{disconnected-spectrum} 
Let $T$ be an operator on an infinite-dimensional Banach space $\cX$. If $\sigma(T)$ has infinitely many connected components then $T$ has an invariant half-space.
\end{lem}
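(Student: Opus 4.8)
The plan is to exploit the Riesz functional calculus to split the space along a clopen decomposition of the spectrum. Since $\sigma(T)$ has infinitely many connected components, I can partition $\sigma(T) = \sigma_1 \sqcup \sigma_2$ into two disjoint, nonempty relatively clopen subsets, each of which is itself a union of infinitely many connected components (indeed, any infinite family of components can be split into two infinite subfamilies, and the union of an arbitrary collection of components of a compact set is relatively closed; one then needs that such a union is also relatively open, which holds because its complement in $\sigma(T)$ is likewise a union of components, hence closed). First I would choose disjoint open sets $U_1, U_2 \subseteq \mathbb C$ with $\sigma_j \subseteq U_j$, form the Riesz idempotent $E_j = \frac{1}{2\pi i}\int_{\partial U_j}(zI - T)^{-1}\,dz$, and set $\cX_j = E_j \cX$. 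Then $\cX = \cX_1 \oplus \cX_2$, each $\cX_j$ is $T$-invariant (indeed hyperinvariant, being a spectral subspace), and $\sigma(T|_{\cX_j}) = \sigma_j$.

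Next I would argue that at least one of the two summands $\cX_j$ is infinite-dimensional — in fact both are, since a finite-dimensional summand would force $\sigma_j$ to be a finite set, contradicting the fact that $\sigma_j$ has infinitely many components. Now I have an infinite-dimensional, $T$-invariant, complemented subspace $\cX_1$ whose complement $\cX_2$ is also infinite-dimensional and $T$-invariant. If $\cX_1$ happened to be a half-space already (i.e. $\codim \cX_1 = \infty$), we are done; but a priori $\cX_2$ could be... no, $\cX_2$ is infinite-dimensional, so $\codim_\cX \cX_1 = \dim \cX_2 = \infty$, and likewise $\dim \cX_1 = \infty$. So $\cX_1$ is already a half-space, and it is invariant (indeed hyperinvariant) for $T$. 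That gives the conclusion directly.

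Let me reconsider where the real content lies: the only genuinely nontrivial point is the topological claim that an infinite family of connected components of the compact set $\sigma(T)$ can be partitioned into two subfamilies whose unions are each relatively clopen in $\sigma(T)$. For a compact Hausdorff space, the quasicomponents coincide with the connected components, and the clopen subsets separate points lying in different components; a standard compactness argument (using that $\sigma(T)$ is compact metrizable, being a compact subset of $\mathbb C$) shows that for any two distinct components there is a clopen set containing one and missing the other, and more usefully that the component of a point is the intersection of the clopen sets containing it. From this one deduces that $\sigma(T)$, having infinitely many components, admits a clopen subset $\sigma_1$ with both $\sigma_1$ and $\sigma(T)\setminus\sigma_1$ infinite — for instance, by repeatedly splitting: if every clopen set or its complement were finite, $\sigma(T)$ would have only finitely many components. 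This is the step I expect to be the main obstacle to write cleanly; everything after it (Riesz idempotents, the direct sum decomposition, the dimension count) is routine functional calculus.

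Once the clopen splitting with both pieces infinite is in hand, the half-space $\cX_1 = E_1\cX$ is automatically hyperinvariant: any operator commuting with $T$ commutes with the resolvent $(zI-T)^{-1}$, hence with $E_1$, hence leaves $\cX_1 = E_1\cX$ invariant. So in fact the lemma can be stated with "hyperinvariant" in place of "invariant," though for the purposes of the paper "invariant" suffices. I would present the proof in this order: (1) the topological lemma extracting a clopen $\sigma_1$ with $\sigma_1$ and $\sigma(T)\setminus\sigma_1$ both infinite; (2) the Riesz idempotent construction and the identity $\cX = E_1\cX \oplus E_2\cX$ with $\sigma(T|_{E_j\cX}) = \sigma_j$; (3) the observation that an infinite spectrum forces the corresponding summand to be infinite-dimensional, so $E_1\cX$ is a half-space; (4) conclude.
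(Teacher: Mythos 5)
The Riesz-idempotent construction and the dimension count at the end are routine and correct, but the topological claim on which your entire proof hinges is false. You assert that whenever $\sigma(T)$ has infinitely many components, one can find a clopen $\sigma_1 \subseteq \sigma(T)$ with both $\sigma_1$ and $\sigma(T)\setminus\sigma_1$ infinite, defending this by ``if every clopen set or its complement were finite, $\sigma(T)$ would have only finitely many components.'' That implication is wrong. Take $\sigma(T) = \{0\} \cup \{1/n : n \ge 1\}$. Every clopen subset either misses $0$, in which case it is a closed discrete subset of a compact set and hence finite, or contains $0$, in which case its complement misses $0$ and is finite; so every clopen set or its complement is finite, yet there are infinitely many components. (Your earlier remark, that ``the union of an arbitrary collection of components of a compact set is relatively closed,'' fails for the same example: $\{1/n : n \ge 1\}$ is a union of components but is not closed.) In this case the clopen splitting you rely on simply does not exist, so your argument produces no invariant half-space.

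This obstructed case is precisely what the paper's proof handles, and it is where the real work of the lemma lies. The paper looks at the relatively clopen components $\sigma_n$ (in the example above, the singletons $\{1/n\}$). For each $n$ the complementary Riesz range $(I-E_n)\cX$ is automatically infinite-dimensional, since $\sigma(T)\setminus\sigma_n$ is infinite; so if some $E_n\cX$ is infinite-dimensional, it is already an invariant half-space and we are done. Otherwise every $E_n\cX$ is finite-dimensional, which forces each $\sigma_n$ to be a singleton $\{\lambda_n\}$ with $\lambda_n$ an eigenvalue. One then picks eigenvectors $x_n$ and sets $\cY = [x_{2n}]_{n\ge1}$; the delicate point is the infinite codimension of $\cY$, which the paper establishes by showing that for every $m$, $\cY$ is contained in the range of $I - F_m$, where $F_m$ is the Riesz idempotent for $\sigma_1 \cup \sigma_3 \cup \cdots \cup \sigma_{2m-1}$. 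This eigenvector-and-codimension step has no analogue in your proposal and cannot be reached via the clopen-splitting route, so a genuinely new idea is needed to fill the gap.
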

\begin{proof}
The set $\sigma(T)$ must contain infinitely many connected components that are relatively open sets in $\sigma(T)$. Denote all such components by $\{ \sigma_n\}_{n=1}^\infty$. For each $n\in\mathbb N$, there is a Riesz projection for $\sigma_n$, that is, an idempotent  $E_n$ such that $E_nT=TE_n$, $\sigma(TE_n|_{E_n \cX})=\sigma_n$, and $\sigma(T(I-E_n)|_{(I-E_n)\cX})=\sigma(T)\setminus\sigma_n$. Observe that, since $\sigma(T)\setminus\sigma_n$ is infinite for each~$n$, $(I-E_n)\cX$ is infinite-dimensional for each~$n$. As such, if $E_n \cX$ is infinite-dimensional for some $n$, then $E_n \cX$ is a $T$-invariant half-space.

Thus we may assume that $E_n \cX$ is finite-dimensional for each $n\in\mathbb N$. It follows that each $\sigma_n$ is a singleton, $\sigma_n=\{\lambda_n\}$, and $\lambda_n$ is an eigenvalue.

For each $\lambda_n$, pick a non-zero vector $x_n$ such that $Tx_n=\lambda_nx_n$. Define $\cY =[x_{2n}]_{n=1}^\infty$. We claim that $\cY$ is a $T$-invariant half-space.

The $T$-invariance is obvious. Also, $\cY$ is  clearly  infinite-dimensional. We need to prove that $\codim\,\cY=\infty$.

Let $m\in\mathbb N$ be arbitrary. Let $F$ be a Riesz projection corresponding to $\sigma_1\cup\sigma_3\cup\dots\cup\sigma_{2m-1}$. We claim that $\cY \subseteq (I-F) \cX$.

Clearly, it is enough to prove that each $x_{2n}$ belongs to $(I-F) \cX$. Denote $\cX_1=F \cX$, $\cX_2=(I-F) \cX$, $T_1=T_{\cX_1}$, $T_2=T_{\cX_2}$, so that $\cX= \cX_1\oplus \cX_2$ and $T=T_1\oplus T_2$. Write $x_{2n}$ as $(x_{2n}^{(1)},x_{2n}^{(2)})$, relative to this decomposition of $\cX$. Then $Tx_{2n}=\big(T_1x_{2n}^{(1)},T_2x_{2n}^{(2)}\big)=\lambda_{2n}\big(x_{2n}^{(1)},x_{2n}^{(2)}\big)$. Since $\lambda_{2n}$ is not in the spectrum of $T_1$, $x_{2n}^{(1)}=0$. It follows that $x_{2n}\in \cX_2$.

Since $m\in\mathbb N$ was arbitrary, this shows that $\cY$ is a half-space.
\end{proof}



\begin{rem} 
The half-space constructed in Lemma~\ref{disconnected-spectrum} is spanned by the eigenvectors corresponding to even-numbered isolated eigenvalues of~$T$. It should be noted that if we drop the condition that the eigenvalues are isolated, the construction may not work. For example, let $T$ be the backward shift on $\ell_2$. Every element of the open unit disk is an eigenvalue for $T$. For each $\lambda$ in the open unit disk, let $x_\lambda=(\lambda,\lambda^2,\lambda^3,\dots)$, so that $Tx_\lambda=\lambda x_\lambda$. Then for any subset $\Lambda$ of the open unit disk having an accumulation point also in the open unit disk, the span of $\{x_\lambda\mid\lambda\in\Lambda\}$ is dense in~$\ell_2$. Indeed, let $\cY=\bigvee\{x_\lambda\mid\lambda\in\Lambda\}$. Let $y=(y_1,y_2,y_3,\dots)\in\cY^{\perp}$ be arbitrary. Consider the  function $f(z)=\sum_{i=1}^\infty \overline{y_i}z^i$, which is analytic in the unit disk. For each $\lambda\in\Lambda$, we have $f(\lambda)=\sum_{i=1}^\infty \overline{y_i}\lambda^i=\langle x_\lambda,y\rangle=0$. Thus, the set of zeros of $f$ is infinite, with an accumulation point in the interior of the unit disk. Therefore, $f$ and, hence, $y$ must be equal to zero.
\end{rem}


\begin{rem}\label{connected-spectrum} 
It follows from Lemma~\ref{disconnected-spectrum} that, when looking to prove the existence of almost-invariant half-spaces for an operator $T$, one may assume that the spectrum of $T$ is connected. Indeed, assume $\sigma(T)$ is not connected. By Lemma~\ref{disconnected-spectrum}, we may assume that $\sigma(T)=\sigma_1\cup\sigma_2\cup\dots\cup\sigma_n$ where each $\sigma_i$ is connected. For each $i=1,\dots,n$, consider a Riesz projection $E_i$ corresponding to $\sigma_i$. That is, $E_iT=TE_i$, $\sigma(TP_i|_{E_i \cX})=\sigma_i$, and $E_1+E_2+\dots+E_n=I$. Denote by $\cX_i$ the subspace $E_i \cX$. It is clear that we may assume without loss of generality that only one of $\cX_i$'s, say, $\cX_1$, is infinite-dimensional. Consider the operator $S=E_1TE_1\in \cB(\cX_1)$. If $S$ has an almost-invariant half-space then so does $T$. Conversely, if $\cY \subseteq \cX$ is a $T$-almost-invariant half-space then $\cY \cap \cX_1$ is an $S$-almost-invariant half-space.
\end{rem}


\begin{rem}\label{restr-triang}
Note that the operator $S$ in Remark~\ref{connected-spectrum} is triangularizable if $T$ is triangularizable. 
\end{rem}




\begin{thm}\label{triang}
Every triangularizable operator $T$ with countable spectrum acting on a reflexive Banach space $\cX$ has an almost-invariant half-space.
\end{thm}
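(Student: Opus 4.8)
The plan is to reduce to the triangularizable quasinilpotent case (Theorem~\ref{qn-aihs} and Theorem~\ref{qn-aihs-reflexive}) by peeling off the spectrum one point at a time. By Remark~\ref{connected-spectrum} and Remark~\ref{restr-triang}, I may assume $\sigma(T)$ is connected; since $\sigma(T)$ is also countable, this forces $\sigma(T)=\{\lambda\}$ to be a single point. Replacing $T$ by $T-\lambda I$ (which changes neither triangularizability nor the almost-invariant subspace lattice, and only shifts the spectrum to $\{0\}$), I may assume $T$ is quasinilpotent. Then Theorem~\ref{qn-aihs-reflexive} applies directly and produces an almost-invariant half-space for $T$, hence for the original operator.

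Let me double-check the reduction to a singleton spectrum. A countable subset of $\mathbb{C}$ that is connected must be a point: any countable set is totally disconnected (a countable metric space has topological dimension $0$, since its set of values under any continuous real map is countable and hence has empty interior, so one can always separate two points by a clopen set), so a connected countable set has at most one element; and $\sigma(T)$ is nonempty. Alternatively, and more elementarily, if $\sigma(T)$ had two points I could choose a circle in the plane separating them and missing $\sigma(T)$ entirely — the countability of $\sigma(T)$ guarantees that only countably many radii are obstructed — giving a nontrivial clopen partition of $\sigma(T)$, contradicting connectedness. Either way, $\sigma(T)$ is a singleton.

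The only remaining point to verify is that the spectral translation $T\mapsto T-\lambda I$ is harmless: a chain of subspaces is $T$-invariant if and only if it is $(T-\lambda I)$-invariant, so triangularizability is preserved; and $(T-\lambda I)\cY\subseteq\cY+\cF$ if and only if $T\cY\subseteq\cY+\cF$, so the almost-invariant half-spaces of $T$ and of $T-\lambda I$ coincide. Thus $T-\lambda I$ is a triangularizable operator on the reflexive space $\cX$ with $\sigma(T-\lambda I)=\{0\}$, i.e.\ a triangularizable quasinilpotent operator, and Theorem~\ref{qn-aihs-reflexive} finishes the argument.

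I expect no serious obstacle here: the theorem is essentially a bookkeeping exercise combining the already-established Riesz-decomposition reductions with the quasinilpotent result. The one place requiring a moment's care is arguing that a connected countable spectrum is a single point — but as noted above this is standard. If one wanted to avoid even the mild topological fact, one could instead run the full eigenvalue-peeling argument directly: the isolated points of a countable compact set are dense in it (by Baire category applied to the compact metric space $\sigma(T)$), so one has infinitely many isolated eigenvalues unless $\sigma(T)$ is finite, and in the infinite case Lemma~\ref{disconnected-spectrum} already gives an \emph{invariant} half-space, while in the finite case Remark~\ref{connected-spectrum} reduces to a single component, which by the same countability-plus-connectedness reasoning is a point.
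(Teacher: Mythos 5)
Your proposal is correct and follows essentially the same route as the paper's own proof: invoke Remarks~\ref{connected-spectrum} and~\ref{restr-triang} to reduce to connected (hence singleton, by countability) spectrum, translate by the scalar, and apply Theorem~\ref{qn-aihs-reflexive}. The extra details you supply — that a countable connected subset of $\bbC$ is a point, and that the scalar shift preserves both triangularizability and the class of almost-invariant half-spaces — are exactly the points the paper leaves implicit, and your justifications of them are sound.
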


\begin{proof}
By Remarks~\ref{connected-spectrum} and~\ref{restr-triang}, we may assume that $\sigma(T)$ is a singleton. Subtracting a scalar multiple of identity, we may assume that $T$ is quasinilpotent. The statement of the theorem now follows from Theorem~\ref{qn-aihs-reflexive}.
\end{proof}


\begin{cor}\label{p-compact}
Every polynomially compact operator on a reflexive Banach space $\cX$ admits an almost-invariant half-space.
\end{cor}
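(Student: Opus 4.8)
The plan is to derive this from Theorem~\ref{triang}: it is enough to check that a polynomially compact operator $T$ on the ambient space $\cX$ (which, as throughout, we take to be infinite-dimensional and reflexive) has countable spectrum and is triangularizable.

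\emph{The spectrum is countable.} Fix a non-zero polynomial $p$ with $p(T)$ compact. Then $\sigma(p(T))$ is countable, and by the spectral mapping theorem $\sigma(p(T))=p(\sigma(T))$; since every fibre of $p$ is finite, $\sigma(T)$ is countable.

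\emph{$T$ is triangularizable.} The crucial observation is that every polynomially compact operator $S$ on a Banach space of dimension at least two has a non-trivial closed invariant subspace. This is immediate when $S$ is a scalar or the underlying space is finite-dimensional. If $S$ is not a scalar and $p(S)\ne 0$, then $S$ commutes with the non-zero compact operator $p(S)$, so Lomonosov's theorem applies. If $p(S)=0$ then $S$ is algebraic, so $\sigma(S)$ is finite: when $\abs{\sigma(S)}\ge 2$ a Riesz projection splits the space, and when $\sigma(S)=\{\mu\}$ the operator $S-\mu I$ is nilpotent, so (unless $S$ is a scalar) $\ker(S-\mu I)$ works. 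Granting this, use Zorn's lemma to choose a chain $\mathcal C$ of $T$-invariant subspaces that is maximal among all chains of $T$-invariant subspaces. Since arbitrary intersections and closed linear spans of $T$-invariant subspaces are again $T$-invariant and remain comparable with every member of $\mathcal C$, the chain $\mathcal C$ is complete. If two consecutive members $\cZ\subsetneq\cY$ of $\mathcal C$ satisfied $\dim(\cY/\cZ)\ge 2$, then the polynomially compact operator induced by $T$ on the Banach space $\cY/\cZ$ would have a non-trivial closed invariant subspace, whose preimage would be a $T$-invariant subspace strictly between $\cZ$ and $\cY$ comparable with every member of $\mathcal C$ --- contradicting maximality. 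Hence each gap of $\mathcal C$ has dimension at most one, and a routine argument --- using completeness to trap any hypothetical extra member between two consecutive ones --- shows that $\mathcal C$ is in fact maximal among \emph{all} chains of subspaces of $\cX$. Thus $T$ is triangularizable, and since $\sigma(T)$ is countable and $\cX$ is reflexive, Theorem~\ref{triang} yields an almost-invariant half-space.

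The only place where genuine work is required is the triangularizability step, and within it the sole non-elementary case --- a non-scalar $S$ for which $p(S)$ is a non-zero compact operator --- is exactly where Lomonosov's theorem is needed; everything else is bookkeeping about maximal invariant chains in the spirit of Ringrose's theorem.
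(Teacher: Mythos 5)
Your proof is correct and reaches the same endpoint as the paper --- reduce to Theorem~\ref{triang} by checking that $T$ has countable spectrum and is triangularizable --- but it obtains both ingredients by different means. For countability, the paper works modulo the compacts: from $p(\pi(T))=0$ it concludes the essential spectrum is finite and then invokes the Putnam--Schechter theorem, which also locates the only possible accumulation points of $\sigma(T)$ inside that finite set; you instead use the spectral mapping theorem $p(\sigma(T))=\sigma(p(T))$ together with the countability of the spectrum of a compact operator and finiteness of polynomial fibres, which is more elementary and already sufficient for the reduction. For triangularizability, the paper simply cites Bernstein--Robinson and Halmos; you reprove it from scratch by establishing that every non-scalar polynomially compact operator on a space of dimension at least two has a non-trivial closed invariant subspace (Lomonosov when $p(S)\ne 0$; Riesz projections or the kernel of a nilpotent when $p(S)=0$), and then running the standard Ringrose-style argument --- Zorn to get a maximal chain of $T$-invariant subspaces, completeness by adding intersections and spans, one-dimensional gaps from the invariant-subspace property of the quotient operators (which are again polynomially compact since $p(T)$ restricts and quotients to compact operators), and finally the trapping argument $\cW_-\subseteq\cW\subseteq\cW_+$ to conclude maximality among all subspace chains. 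This is more work than the paper's two citations, but it makes explicit that in the single genuinely non-finite-dimensional case the only analytic input is Lomonosov's theorem; the rest is structure theory of invariant subspace chains. Both proofs are valid; yours is more self-contained while losing the finer accumulation-point information about $\sigma(T)$ that the paper's route provides but does not actually need for this corollary.
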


\begin{proof}
Let $T$ be the polynomially compact operator. 
If $p$ is a polynomial such that $p(T)\in\cK(\cX)$ and $\pi:\cB(\cX)\to\cB(\cX)/\cK(\cX)$ is the canonical quotient map, then $p(\pi(T))=0$, and so the essential spectrum $\sigma(\pi(T))$ of $T$ is finite. From this and the Putnam-Schechter theorem (see~\cite{Putn62} or \cite[Theorem~6.8]{ACFA}), it follows that the spectrum of $T$ consists of countably many points, and the finitely many elements of $\sigma(\pi(T))$ are the only possible accumulation points of $\sigma(T)$. Since every polynomially compact operator is triangularizable (see \cite{BR1966, Hal1966}), it follows from Theorem~\ref{triang} that $T$ has an almost-invariant half-space.
\end{proof}


\begin{rem}
In view of Theorem~\ref{p-compact} and the well-known theorem of Lomonosov (see~\cite{Lom73}; see also, e.g., \cite[Theorem~10.19]{AA02}) that all operators commuting with a non-zero compact operator have hyperinvariant subspaces, one may ask if compact operators always admit \emph{almost hyperinvariant half-spaces}, that is, half-spaces which are almost-invariant under all operators commuting with a given compact operator. This question has a negative answer. Indeed, let $D$ be a Donoghue operator (see the Introduction for the definition and properties of Donoghue operators). Observe that $D$ is compact. If $D$ had an almost hyperinvariant half-space then this half-space would be almost-invariant for the norm-closed algebra $\mathcal A$ generated by $D$. However, it follows from \cite[Theorem~3.6]{Popov2010} that $\mathcal A$ has no almost-invariant half-spaces.
\end{rem}


For the remainder of this section, we shall restrict our attention to separable Hilbert spaces.

\bigskip

Given the results of this section, it makes sense to pose the following question: \emph{does every triangularizable operator admit an almost-invariant half-space}? 

We will consider a special case of triangularizable operators: bitriangular operators. Recall that an operator $T$ on a separable Hilbert space $\cH$ \term{triangular} if it has an upper triangular matrix with respect to some orthonormal basis indexed by the natural numbers. That is, there exists an orthonormal basis $(e_n)_{n=1}^\infty$ for $\cH$ such that $\langle Te_j,e_i\rangle=0$ whenever $i>j$. We refer the reader to~\cite[Chapter~3]{Herr89}, \cite{DH90} and~\cite{Herr91} for more information about the triangular operators. The operator $T$ is called \term{bitriangular} if both $T$ and $T^*$ are triangular, perhaps with respect to different orthonormal bases.

We will use some definitions and notations from~\cite{DH90}. For an operator $A\in\mathcal B(\cH)$ and an integer $k \ge 1$, denote by $\ker(A,k)$ the space $\ker A^k\ominus\ker A^{k-1}$. By $\nul(A,k)$ we will denote the dimension of $\ker(A,k)$, and finally, the symbol $\alpha(A,k)$ will stand for the difference $\nul(A,k)-\nul(A,k+1)$. Here the difference $\infty-\infty$ is considered to be~$\infty$.

Observe that if $T$ is an operator acting on a finite-dimensional space, the number $\nul(T-\lambda,k)$ counts the number of Jordan blocks corresponding to $\lambda$ of size at least~$k$, while the number $\alpha(T-\lambda,k)$ counts the number of blocks for $\lambda$ of size exactly~$k$. This motivates the following definition from~\cite{DH90}: if $T\in\mathcal B(\cH)$, the \term{canonical Jordan model} for $T$ is defined as
$$
J(T)=\bigoplus_{\lambda\in\sigma_p(T)}J(T,\lambda)=
\bigoplus_{\lambda\in\sigma_p(T)}\bigoplus_{k\ge 1}(\lambda I_k+J_k)^{(\alpha(T-\lambda,k))}.
$$
Here $I_k$ is the identity $k\times k$-matrix and $J_k$ is the nilpotent $k\times k$-matrix 
$$
J_k=\left[\begin{array}{ccccc}
0 & 1 & 0 & \dots & 0\\
0 & 0 & 1 & \dots & 0\\
  & & & \dots & \\
0 & 0 & 0 & \dots & 1\\
0 & 0 & 0 & \dots & 0
\end{array}\right],
$$
and the symbol $A^{(j)}$ means the direct sum of $j$ copies of the operator~$A$. The direct sum is taken in the Hilbert space sense: 
the underlying space is a Hilbert space direct sum of the corresponding summands (in particular, each summand is orthogonal to any other summand).

Recall that an operator $A\in\mathcal B(\cH)$ is called a \term{quasiaffinity} if $A$ is injective and has dense range. Two operators $T$ and $S$ in $\mathcal B(\cH)$ are called \term{quasisimilar} if there exist two quasiaffinities $A$ and $B$ such that $AT=SA$ and $TB=BS$. It is known (see~\cite{H72}) that quasisimilarity preserves the existence of hyperinvariant subspaces. However, it may not preserve the structure of the lattice of hyperinvariant subspaces (see~\cite{H78}). It is not known whether or not quasisimilarity preserves the existence of invariant subspaces.

The following statement about bitriangular operators is the main result of~\cite{DH90}.

\begin{thm}\label{BT-jordan-form}
{\rm\cite[Theorem~4.6]{DH90}} Every bitriangular operator is quasisimilar to its canonical Jordan model.
\end{thm}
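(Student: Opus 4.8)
The plan is to exhibit the quasisimilarity concretely: construct a bounded operator $Y$ from the Hilbert space $\cH_J$ of $J(T)$ to $\cH$ with dense range and $TY=YJ(T)$, and a bounded operator $X$ from $\cH$ to $\cH_J$ with dense range and $XT=J(T)X$, and then check that both are injective. The substance of the argument is a description of the generalized-eigenvector structure of a bitriangular operator; once that is in hand, $X$ and $Y$ are assembled from (algebraic) Jordan chains.

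\textbf{Structure theory of bitriangular operators.} If $T$ is triangular with respect to an orthonormal basis $(e_n)$, then the subspaces $\mathcal E_n=[e_1,\dots,e_n]$ are finite-dimensional, $T$-invariant, and increase to a dense subspace, so $\sigma(T)$ is countable and equals $\overline{\sigma_p(T)}$. Applying this to $T$ and to $T^*$ and playing the two triangularizations against one another, one proves first that the algebraic span $\mathcal D:=\sum_{\lambda\in\sigma_p(T)}\bigcup_{k\ge1}\ker(T-\lambda)^k$ is dense in $\cH$ (and symmetrically for $T^*$): the mechanism is a duality argument, since $\overline{\mathcal D}$ is $T$-invariant so $\overline{\mathcal D}^{\perp}$ is $T^*$-invariant, and if $\overline{\mathcal D}\ne\cH$ the triangular structure of $T^*$ forces an eigenvector $g\in\overline{\mathcal D}^{\perp}$ with $T^*g=\overline{\mu}\,g$, whereupon $g$ would be orthogonal to all generalized eigenvectors of $T$ at $\mu$ — which a Riesz-idempotent computation with the finite sections $\mathcal E_n$ at $\mu$ shows to be impossible. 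Second, viewing $T$ as a purely algebraic map on the (non-closed) space $\mathcal D$, it splits as the direct sum over $\lambda\in\sigma_p(T)$ of its restrictions to the algebraic generalized eigenspaces $\mathcal D_\lambda=\bigcup_k\ker(T-\lambda)^k$, on each of which $T-\lambda$ is locally nilpotent; the crucial point — and the one place where bitriangularity, as opposed to mere triangularity, is genuinely used — is that $T\vert_{\mathcal D_\lambda}$ is a direct sum of \emph{finite-dimensional} Jordan blocks at $\lambda$, with no infinite Jordan chain and no ``divisible'' part, since either would exhibit a Donoghue-type summand, and a Donoghue operator, while triangular, is not bitriangular (the adjoint of a backward weighted shift has no eigenvalues). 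A bookkeeping of kernel dimensions then identifies the number of size-$k$ blocks at $\lambda$ with $\nul(T-\lambda,k)-\nul(T-\lambda,k+1)=\alpha(T-\lambda,k)$, precisely the multiplicity of $\lambda I_k+J_k$ in $J(T)$; running the argument for $T^*$ in parallel gives $\sigma_p(T^*)=\overline{\sigma_p(T)}$ and $\alpha(T^*-\overline{\lambda},k)=\alpha(T-\lambda,k)$, so the canonical Jordan model of $T^*$ is unitarily equivalent to $J(T)^*$.

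\textbf{Assembling and checking the quasiaffinities.} Fix a Jordan basis of $\mathcal D$ as above. For each block $\lambda I_k+J_k$ of $J(T)$, send its standard cyclic basis vectors to the corresponding chain vectors of $T$ in $\mathcal D$; the resulting linear map from the algebraic direct sum underlying $\cH_J$ into $\mathcal D$, after rescaling the images block-by-block by geometrically decaying scalars, is bounded, hence extends to $Y\in\cB(\cH_J,\cH)$ with $TY=YJ(T)$, and $\operatorname{ran}Y$ is dense by the density of $\mathcal D$. The identical construction applied to $T^*$ (using $J(T^*)\cong J(T)^*$) yields a bounded dense-range $W$ with $T^*W=WJ(T^*)$; setting $X:=W^*$ gives $XT=J(T)X$ with dense range. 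Finally $X$ and $Y$ are injective because the rescaled Jordan basis of $\mathcal D$ remains a minimal complete system in $\cH$, so the only vector of $\cH_J$ killed by $Y$ is $0$ (symmetrically for $X$). Thus $X$ and $Y$ are quasiaffinities intertwining $T$ with $J(T)$ in both directions, i.e.\ $T$ is quasisimilar to $J(T)$.

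The main obstacle is the structure theory: extracting concrete information from the hypothesis that $T$ and $T^*$ are triangular with respect to two \emph{unrelated} orthonormal bases. Because the two triangularizations cannot be exploited simultaneously, one is forced to argue through invariant and co-invariant subspaces and through Riesz-type idempotents attached to individual eigenvalues, and this bookkeeping is most delicate when $\sigma_p(T)$ has accumulation points, so that no global spectral splitting of $\cH$ is available and the whole family of generalized eigenspaces must be controlled at once. By comparison, the assembly step is routine: a convergence estimate for the rescaling together with linear independence of Jordan chains.
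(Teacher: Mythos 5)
The paper does not prove this theorem: it is quoted verbatim as Theorem~4.6 of Davidson and Herrero~\cite{DH90}, and the rest of the paper treats it as a black box. So there is no ``paper's own proof'' for your argument to be compared against; what can be done is to assess your sketch on its own terms, and against the published argument it would have to replace.

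On that reading, your high-level plan is reasonable and does capture genuine features of~\cite{DH90} (countable spectrum, density of the span of generalized eigenspaces, explicit intertwiners), but it is far from a proof, and two steps in particular are presented as if routine when they contain the entire difficulty. First, the assertion that $T|_{\mathcal D_\lambda}$ is an \emph{algebraic} direct sum of finite Jordan blocks. A locally-nilpotent endomorphism of a countable-dimensional vector space decomposes, as a $\mathbb C[x]$-module, into finite cyclic pieces and Pr\"ufer (divisible) summands, and you wave at the divisible case with ``this would exhibit a Donoghue-type summand, hence $T$ is not bitriangular.'' That implication is not free: a divisible summand of the abstract module sits inside $\cH$ in an uncontrolled way, and deducing that its closure carries an honest bounded Donoghue-like restriction of $T$ whose adjoint has no eigenvalues requires a genuine argument, which is missing. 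Second, the injectivity of $Y$. From dense range you get nothing about the kernel; your claim is that after rescaling the Jordan vectors form a ``minimal complete system'' in $\cH$, and minimality (no vector lies in the closed span of the others, for an \emph{infinite} family) is exactly what you would need — but it is asserted, not proved, and there is no reason to expect a haphazard Jordan basis, rescaled only to make $Y$ bounded, to be minimal. Indeed the real content of the Davidson--Herrero proof is precisely the construction of intertwiners with these two properties under control simultaneously; they do it through a sequence of staircase/approximation reductions, not by picking an algebraic Jordan basis and scaling. The boundedness step (``geometrically decaying scalars'') also needs care when $\sigma_p(T)$ accumulates and blocks of unbounded length appear at nearby eigenvalues, but that is repairable; the module-decomposition and minimality claims are the gaps that would have to be closed.
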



We are now ready to state our result about bitrangular operators.

\begin{thm}\label{bitriangular}
If $T$ is a bitriangular operator then either $T$ can be written as $\lambda I+F$ where $F$ is a finite rank operator or $T$ has a hyperinvariant half-space. In particular, $T$ has an invariant half-space.
\end{thm}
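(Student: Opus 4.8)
The plan is to lean on Theorem~\ref{BT-jordan-form}. Write $T\sim_{qs}S:=J(T)$ via quasiaffinities $A,B$ with $AT=SA$ and $TB=BS$, where $S$ acts on a Hilbert space $\cK$; we may assume $\cH$, and hence $\cK$, is infinite-dimensional, for otherwise $T=\lambda I+F$ trivially with $\lambda=0$, $F=T$. First I would check that the first alternative in the statement is equivalent to the corresponding statement for the Jordan model: if $S=\lambda I+G$ with $G$ of finite rank, set $\cG:=G\cK$ (finite-dimensional); then $(T-\lambda)B=B(S-\lambda)=BG$ has range inside $B\cG$, and since $\overline{B\cH}=\cH$ we get $\overline{(T-\lambda)\cH}\subseteq\overline{B\cG}=B\cG$, which is finite-dimensional, so $T-\lambda$ has finite rank and $T=\lambda I+F$ with $F:=T-\lambda$. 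Hence from now on I may assume that $S$ — equivalently $T$ — is \emph{not} a scalar plus a finite-rank operator, and I must produce a hyperinvariant half-space for $T$.

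The step I expect to be the main obstacle is exactly this last one. It is tempting to invoke ``$S$ has a hyperinvariant subspace and quasisimilarity preserves that'' (Hoover, \cite{H72}), but quasisimilarity is not known to preserve the \emph{size} of a subspace, and the obvious pullback $\overline{B\cY}$ of a half-space $\cY$ of $S$ need not have infinite codimension. So instead I would build the half-space directly inside $\cH$, using only data the quasisimilarity does control: (i) $\sigma_p(T)=\sigma_p(S)$ and $\dim\ker(T-\lambda)=\dim\ker(S-\lambda)$ for every $\lambda$, since $A$ (resp.\ $B$) carries $\ker(T-\lambda)$ injectively into $\ker(S-\lambda)$ (resp.\ conversely); (ii) the same applied to $T^*$, which is again bitriangular with $T^*\sim_{qs}S^*$, so that $\sigma_p(T^*)=\{\bar\lambda\::\:\lambda\in\sigma_p(T)\}$ and $\dim\ker(T^*-\bar\lambda)=\dim\ker(T-\lambda)$ (here one uses that $J_k^*$ is unitarily equivalent to $J_k$); (iii) each eigenspace $\ker(T-\lambda)$ is invariant under $\{T\}'$, hence so is any closed linear span of eigenspaces; (iv) $\ker(T-\lambda)\perp\ker(T^*-\bar\mu)$ whenever $\lambda\ne\mu$, which is immediate from $\langle x,(T^*-\bar\mu)y\rangle=\langle(T-\lambda)x,y\rangle+(\lambda-\mu)\langle x,y\rangle$; and (v) if $T$ is not a scalar plus a finite-rank operator, then $\codim\ker(T-\lambda)=\infty$ for every $\lambda\in\sigma_p(T)$, since otherwise $T-\lambda$ would vanish on a finite-codimensional subspace and hence have finite rank.

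Now I would split into two cases. If $\sigma_p(T)$ is infinite, enumerate it as $\{\lambda_n\}_{n\ge1}$ and put $\cY=\bigvee_{n\ \mathrm{even}}\ker(T-\lambda_n)$. By (iii) it is hyperinvariant; it is infinite-dimensional because eigenspaces at distinct eigenvalues are linearly independent; and by (iv) its orthogonal complement contains $\bigvee_{n\ \mathrm{odd}}\ker(T^*-\bar\lambda_n)$, which is infinite-dimensional by (ii) and the same linear independence, so $\codim\cY=\infty$ and $\cY$ is a hyperinvariant half-space. If $\sigma_p(T)$ is finite, then since $\cK$ is infinite-dimensional and $S=\bigoplus_{\lambda\in\sigma_p(T)}J(T,\lambda)$ is a finite orthogonal sum, some summand $J(T,\lambda_0)$ is infinite-dimensional; as every Jordan block $\lambda_0 I_k+J_k$ is finite-dimensional, this forces $\dim\ker(S-\lambda_0)=\infty$, whence $\dim\ker(T-\lambda_0)=\infty$ by (i). Together with (iii) and (v), this makes $\ker(T-\lambda_0)$ itself a hyperinvariant half-space.

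In either case $T$ has a hyperinvariant half-space, which settles the dichotomy; and since such a subspace is a nonzero proper invariant subspace, $T$ in particular has an invariant half-space. The only genuine work is thus (a) the transfer of the ``scalar plus finite rank'' property across the quasisimilarity, which is the easy density argument above, and (b) arranging that the eigenspace-based subspaces produced in the second half really do have infinite codimension — and it is there, rather than in any appeal to Hoover's theorem, that the orthogonality in (iv) and the passage to $T^*$ in (ii) do the work, precisely because mere quasisimilarity does not let one carry a half-space of $J(T)$ back to $T$.
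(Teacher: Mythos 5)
Your proof is correct, and it takes a genuinely different — and in several respects more elementary — route than the paper's. Both arguments start from the Davidson--Herrero Jordan model theorem and split on whether $\sigma_p(T)$ is finite or infinite, but the mechanisms underneath differ. The paper works with the generalized-eigenspace spans $\cH(T,\Gamma)=\bigvee\{\ker(T-\lambda)^k:\lambda\in\Gamma,k\ge1\}$ and invokes \cite[Lemma~4.5]{DH90} for their hyperinvariance, the span/trivial-intersection relation $\cH(T,\Gamma)\vee\cH(T,\Gamma^c)=\cH$ with $\cH(T,\Gamma)\cap\cH(T,\Gamma^c)=\{0\}$, and the identity $\overline{A\cH(T,\Gamma)}=\cH(J(T),\Gamma)$ to control dimensions; in the finite-spectrum case it further reduces to a one-point spectrum (using \cite[Lemma~2.3]{DH90} and \cite[Proposition~2.9]{Herr91} to preserve bitriangularity) and then studies $\ker T$. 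You instead work only with honest eigenspaces $\ker(T-\lambda)$: their hyperinvariance is immediate, their dimensions are transferred across the quasisimilarity by the elementary fact that the quasiaffinities restrict to injections between eigenspaces, and the infinite-codimension step is handled by a clean orthogonality trick — $\ker(T-\lambda)\perp\ker(T^*-\bar\mu)$ for $\lambda\ne\mu$, together with the observation that $T^*$ is again bitriangular with Jordan model $J(T)^*$ (unitarily equivalent to the conjugate model since $J_k^*\cong J_k$). In the finite case you dispense with the reduction entirely by directly locating a $\lambda_0$ with $\dim\ker(T-\lambda_0)=\infty$ and using the contrapositive observation (v) that finite codimension of an eigenspace forces $T$ to be a scalar plus finite rank. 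The net effect is that you need only the quasisimilarity statement of \cite[Theorem~4.6]{DH90} itself, together with elementary facts about adjoints, eigenspaces, and Jordan blocks, whereas the paper also leans on the finer $\cH(T,\Gamma)$ machinery from \cite{DH90}. Your transfer of the ``scalar plus finite rank'' property to the $J(T)$ side is correct and matches the paper's corresponding (implicit) observation via injectivity of the quasiaffinities.
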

\begin{proof}
Let $T$ be a bitriangular operator that cannot be written in the form $\lambda I+F$ where $F$ is a finite rank operator. Following \cite[Lemma~4.5]{DH90}, for a subset $\Gamma\subseteq\mathbb C$, set
\[
\cH(T,\Gamma)=\bigvee\{\ker(T-\lambda)^k\mid\lambda\in\Gamma,\ k\in\mathbb N\}.\]

Since $T$ is bitriangular, the point spectrum $\sigma_p(T)$ is non-empty and countable (see \cite[Theorem~3.1]{DH90}). Write $\sigma_p(T)$ as a (perhaps finite) sequence of distinct complex numbers $\lambda_1,\lambda_2,\lambda_3\dots$ . For each $n$, define 
$$
T_n=\bigoplus_{k\ge 1}(\lambda_n I_k+J_k)^{(\alpha(T-\lambda_n,k))}.
$$
The canonical Jordan model of $T$ is $J(T)=\bigoplus_nT_n$. By Theorem~\ref{BT-jordan-form}, $T$ is quasisimilar to $J(T)$. Fix two quasiaffinities $A$ and $B$ such that $AT=J(T)A$ and $TB=BJ(T)$. 

Notice that the point spectrum of each summand in $J(T)$ is a singleton: $\sigma_p(T_n)=\{\lambda_n\}$.

Assume first that $\sigma_p(T)$ is infinite. Consider $\Gamma=\{\lambda_{2n}\}_{n=1}^\infty$. By \cite[Lemma~4.5]{DH90}, the spaces $\cH(T,\Gamma)$ and $\cH(T,\mathbb C\setminus\Gamma)$ are $T$-hyperinvariant, $\cH=\cH(T,\Gamma)\vee\cH(T,\mathbb C\setminus\Gamma)$ and $\cH(T,\Gamma)\cap\cH(T,\mathbb C\setminus\Gamma)=\{0\}$. Also, $\overline{A\cH(T,\Gamma)}=\cH(J(T),\Gamma)$. It is clear that $\cH(J(T),\Gamma)$ is infinite-dimensional. Therefore, so is $\cH(T,\Gamma)$. 

Analogously, the space $\cH(T,\mathbb C\setminus\Gamma)$ is infinite-dimensional, too. It follows that $\cH(T,\Gamma)$ is a $T$-hyperinvariant half-space.

Assume now that $\sigma_p(T)$ is finite: $\sigma_p(T)=\{\lambda_1,\dots,\lambda_n\}$. Abbreviate $\cH(T,\{\lambda_i\})$ as $\cH(T,\lambda_i)$. Then $\cH=\cH(T,\lambda_1)\vee\dots\vee\cH(T,\lambda_n)$, where each $\cH(T,\lambda_i)$ is $T$-hyperinvariant, and each pair of the summands in this decomposition has trivial intersection. If there are two indices $i\ne j$ such that $\dim\big(\cH(T,\lambda_i)\big)=\dim\big(\cH(T,\lambda_i)\big)=\infty$, then either of them is a $T$-hyperinvariant half-space. 

Observe that if all of $\cH(T,\lambda_i)$ is finite-dimensional then $J(T)$ is finite rank. Hence, by injectivity of $A$, we can conclude that $T$ is finite rank, too.  Therefore, we may assume without loss of generality that exactly one of the $\cH(T,\lambda_i)$, $i \ge 1$ -- say $\cH(T,\lambda_1)$ -- is infinite-dimensional. It follows from \cite[Lemma~2.3]{DH90} and \cite[Proposition~2.9]{Herr91} that $T|_{\cH(T,\lambda_1)}$ is a bitriangular operator. Therefore, we may assume that the point spectrum of $T$ is a singleton.

Denote this unique element of $\sigma_p(T)$ by $\lambda$. Since scalar perturbations do not change hyperinvariant subspaces, we may assume that $\lambda=0$. Also, since $J(T)$ is not finite rank, it must have infinitely many blocks of size $\ge 2$. In particular, $J(T)$ acts on an infinite-dimensional space and has infinite-dimensional kernel. By injectivity of $B$, $\ker T$ is infinite-dimensional, too. By \cite[Proposition~3.2]{DH90}, $\cH=\bigvee_{k\ge 1}\ker T^k$. It follows from the definition of $J(T)$ that if $\ker T$ were finite codimensional, $J(T)$ would have only finitely many blocks of size $\ge 2$, a contradiction.   Since we assumed that $T$ is not finite-rank at the outset, $\ker\, T$ is a half-space, which is easily seen to be hyperinvariant for $T$.
\end{proof}


\section{Operator algebras with common almost-invariant half-spaces} \label{sec3}


\subsection{} \label{sec3.1}
Our goal in this section is to show that if $\cA$ is a norm-closed algebra of operators on a Banach space $\cX$, and if $\cY$ is an almost-invariant subspace for $\cA$, then $\cA$ admits an invariant subspace.  We begin with a Lemma which will also be useful in the next section.


\begin{lem} \label{lem4.43aBanach}
Let $\cX$ and $\cY$ be Banach spaces.   Suppose that  $T = \begin{bmatrix} A & X \\ C & B \end{bmatrix} \in \cB(\cX \oplus \cY)$.  If $\kappa \in \bbN$ and $\mathrm{rank}\, T = \mathrm{rank}\, C = \kappa < \infty$, then $X$ is uniquely determined by $A, C$ and $B$. If, in addition, $C$ is invertible (which can only happen when $\dim(\cX\oplus\cY)<\infty$), then $X=AC^{-1}B$.
\end{lem}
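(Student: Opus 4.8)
The plan is to exploit the rank hypothesis to show that the rows of $T$ lying in $\cX\oplus\cY$ (i.e. the "block rows" $\begin{bmatrix} A & X\end{bmatrix}$ and $\begin{bmatrix} C & B\end{bmatrix}$) are forced into a rigid relationship by the fact that $C$ alone already accounts for all of the rank of $T$. More precisely, let $P:\cX\oplus\cY\to\cX$ and $Q:\cX\oplus\cY\to\cY$ be the canonical projections, so that $C = PT\big|_{\cX}$... actually it is cleaner to work with ranges. Write $R = \operatorname{ran} T \subseteq \cX\oplus\cY$, a $\kappa$-dimensional subspace. Since $\operatorname{ran} C \subseteq \cX$ has dimension $\kappa$ as well, and $\operatorname{ran} C$ is (up to the obvious identification $\cX\hookrightarrow\cX\oplus\cY$) contained in the closure of $P(R)$, a dimension count will show $P|_R : R \to \cX$ is injective with image exactly $\operatorname{ran} C$, identified with $\operatorname{ran} C \oplus 0$. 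Consequently $R$ is the graph of a linear map $\Phi:\operatorname{ran} C \to \cY$: every element of $R$ has the form $(u, \Phi u)$ with $u\in\operatorname{ran} C$.

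**The key steps, in order:** First, establish $\operatorname{ran} T = \{(u,\Phi u) : u \in \operatorname{ran} C\}$ for a well-defined linear $\Phi$, using $\operatorname{rank} T = \operatorname{rank} C = \kappa$ as above — the injectivity of $P|_R$ follows because $\ker(P|_R) = R \cap (0\oplus\cY)$, and if this were nonzero then $P(R)$ would have dimension $<\kappa$, yet $P(R) \supseteq \operatorname{ran} C$ which has dimension $\kappa$. Second, apply $T$ to a vector $(x,0)$, $x\in\cX$: we get $(Ax, Cx) \in \operatorname{ran} T$, hence $Ax = \Phi(Cx)$ for all $x\in\cX$; that is, $A = \Phi C$ on $\cX$, which pins down $\Phi$ on $\operatorname{ran} C$ as the map sending $Cx \mapsto Ax$ (well-defined precisely because $Cx = Cx' \Rightarrow Ax = Ax'$, which itself follows from the graph property — or one notes $\ker C \subseteq \ker A$ directly, since $(x,0)$ with $Cx=0$ gives $(Ax,0)\in R$, and $R\cap(\cX\oplus 0) = \operatorname{ran} C \oplus 0$ would force $Ax \in \operatorname{ran} C$; iterating carefully, or just invoking $\operatorname{ran} T=\operatorname{ran} C$'s graph structure, gives $Ax=0$). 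Third, apply $T$ to $(0,y)$, $y\in\cY$: we get $(Xy, By)\in\operatorname{ran} T$, so $Xy = \Phi(By)$ for all $y\in\cY$. Combining, $X = \Phi B$, and since $\Phi$ is determined by $A$ and $C$ alone (it is the unique map with $\Phi C = A$, defined on $\operatorname{ran} C$), we conclude $X$ is uniquely determined by $A,C,B$. For the addendum: if $C$ is invertible, then $\operatorname{ran} C = \cX$, the relation $\Phi C = A$ gives $\Phi = AC^{-1}$, and hence $X = \Phi B = AC^{-1}B$; invertibility of $C$ forces $\operatorname{rank} C = \dim\cX$ to be finite and also $\operatorname{rank} T \ge \operatorname{rank}(\text{second block column restricted appropriately})$, which combined with $\operatorname{rank} T = \operatorname{rank} C$ squeezes everything into finite dimensions.

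**The main obstacle** I anticipate is the careful handling of $\ker C \subseteq \ker A$ and the well-definedness of $\Phi$ on $\operatorname{ran} C$ without circular reasoning — one must argue cleanly that the graph description of $\operatorname{ran} T$ is legitimate (i.e. that $P|_{\operatorname{ran} T}$ is genuinely injective) before using it, and that finite rank is what makes the dimension count rigorous (in infinite dimensions, $\operatorname{ran} C$ dense in $P(\operatorname{ran} T)$ would not suffice, but here both are honestly $\kappa$-dimensional so they coincide). A secondary subtlety is that $X$ need not be finite rank a priori, so the argument for its uniqueness must go entirely through the structure of $\operatorname{ran} T$ rather than through any rank bound on $X$ itself; fortunately the relation $Xy = \Phi(By)$ handles this automatically. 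I expect the whole proof to be short once the graph observation is in place.
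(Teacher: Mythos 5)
Your approach is correct in substance and is a clean alternative to the paper's, but the write-up consistently mislabels which factor of $\cX \oplus \cY$ the range of $C$ sits in, and this propagates through the draft. With the convention $T(x,y) = (Ax + Xy,\, Cx + By)$, one has $\ran\, C \subseteq \cY$, not $\cX$. The projection whose restriction to $R := \ran\, T$ you should prove injective is therefore $Q\colon \cX\oplus\cY \to \cY$, not $P$: then $\ran\, C \subseteq Q(R)$, and $\dim Q(R) \le \dim R = \kappa = \dim \ran\, C$ forces $Q(R) = \ran\, C$ and $\ker(Q|_R) = R \cap (\cX \oplus 0) = \{0\}$. Thus $R$ is the graph $\{(\Phi v, v) \mid v \in \ran\, C\}$ of a linear map $\Phi\colon \ran\, C \to \cX$ --- note this is the reverse of the domain and codomain you wrote, and it is the version consistent with the relations you actually extract, $\Phi(Cx) = Ax$ and $\Phi(By) = Xy$ (the latter legitimate because $By = Q\big(T(0,y)\big) \in Q(R) = \ran\, C$). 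These pin down $\Phi$ from $A$ and $C$, hence $X = \Phi B$ from $A, B, C$. The same slip recurs in the invertible case: there $\ran\, C = \cY$ (not $\cX$), $\Phi = AC^{-1}$, and $X = AC^{-1}B$. The one point your sketch worried about, $\ker\, C \subseteq \ker\, A$, is handled exactly as you anticipated: $Cx = 0$ gives $T(x,0) = (Ax,0) \in R \cap (\cX\oplus 0) = \{0\}$, so $Ax = 0$.

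Once the bookkeeping is corrected, this is a genuinely different packaging of the argument than the paper's. The paper splits $\cX = \cV_1 \oplus \ker\, C$ and $\cY = \ran\, C \oplus \cW_2$, manufactures a bounded pseudo-inverse $E$ with $EC$ equal to the idempotent onto $\cV_1$, left-multiplies $T$ by the invertible $\left[\begin{smallmatrix} I & -AE \\ 0 & I \end{smallmatrix}\right]$, and deduces $X = AEB$ from rank invariance under that multiplication. Your $\Phi$ is precisely $A E$ restricted to $\ran\, C$, so both proofs land on the same formula; but the graph description of $\ran\, T$ makes it more transparent why $\mathrm{rank}\, T = \mathrm{rank}\, C$ rigidifies $X$, and it avoids choosing complements or writing out block-row reductions explicitly. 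The paper's version has the advantage of producing the pseudo-inverse $E$ as a concrete bounded operator, which is what gets reused later (e.g.\ in the proof of Theorem~\ref{thm4.5}(a)).
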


\begin{pf}
Since $C: \cX\to \cY$ has rank $\kappa < \infty$, $\cW_1 := \ran\, C$ is topologically complemented in $\cY$.  Thus  we may decompose $\cY = \cW_1 \oplus \cW_2$ for some closed subspace $\cW_2$ of $\cY$.   Similarly, $\cV_2:= \ker\, C$ is finite-codimensional in $\cX$, and so we may decompose $\cX = \cV_1 \oplus \cV_2$ for some closed subspace $\cV_1$ of $\cX$.  With respect to these decompositions of $\cX$ and $\cY$, we may then write 
\[
C = \bordermatrix{\ & \cV_1 &  \cV_2 \cr \cW_1 & C_0 & 0 \cr \cW_2 & 0 & 0 }, \] 
where $C_0$ is invertible.   From rank considerations, it then follows that $T$ must be of the form:
\[
T = \left[ \begin{array}{c|c} \begin{array}{cc} * & 0 \\ * & 0 \end{array} & \begin{array}{cc} * & * \\ * & * \end{array} \\    --- & --- \\ \begin{array}{cc} C_0 & 0 \\ 0 & 0 \end{array} & \begin{array}{cc} * & * \\ 0 & 0 \end{array} \end{array} \right] \]
relative to the decomposition $\cX \oplus \cY = \cV_1 \oplus \cV_2 \oplus \cW_1 \oplus \cW_2$.

Let $E_0: \cW_1 \to \cV_1$ denote the inverse of $C_0$, so that $E_0 C_0 = I|_{\cV_1}$ and $C_0 E_0 = I|_{\cW_1}$.   We can extend $E_0$ to a continuous linear map 
\[
E = \begin{bmatrix} E_0 & 0 \\ 0 & 0 \end{bmatrix}, \]
with the block-matrix decomposition corresponding to that of an operator from $\cY = \cW_1 \oplus \cW_2$ to $\cX = \cV_1 \oplus \cV_2$.
Then $E C \in \cB(\cX)$ is simply the  projection of $\cX$ onto $\cV_1$, and the operator matrix for $T$ above shows that $A E C = A$. From this we conclude that

\[
\begin{bmatrix} I_\cX & -A E \\ 0 & I_\cY \end{bmatrix} \begin{bmatrix} A & X \\ C & B \end{bmatrix} = \begin{bmatrix} 0 & X - A E B \\ C & B \end{bmatrix}. \] 

Since the operator $\begin{bmatrix} I_\cX & -A E \\ 0 & I_\cY \end{bmatrix}$ is clearly invertible, it preserves rank.   But 
\[
\mathrm{rank}\, C = \kappa = \mathrm{rank}\, \begin{bmatrix} 0 & X - A E B \\ C & B \end{bmatrix} \]
then implies that $X - A E B = 0$.  Since $E$ is clearly uniquely determined by $C$, the statement of the Lemma is proven.
\end{pf}

\bigskip

The following theorem is the main result of this section.

\begin{thm} \label{thm3.3Banach}
Let $\cX$ be a Banach space and $\cA \subseteq \cB(\cX)$ be a norm-closed algebra of operators.   Suppose that $\cY$ is a closed, topologically complemented half-space of $\cX$ and that for each $T \in \cA$, there exists a finite-dimensional subspace $\cM_T$ of $\cX$ so that $T \cY \subseteq (\cY + \cM_T)$.   Then $\cA$ admits an invariant half-space $\cL$.   Moreover, there exists a finite-dimensional subspace $\cF$ of $\cX$ so that $\cL$ is a finite-codimensional subspace of $\cY + \cF$.
\end{thm}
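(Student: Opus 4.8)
The plan is to split a finite-dimensional ``error space'' $\cF$ off the complement of $\cY$, pass from $\cY$ to the slightly larger subspace $\cY+\cF$, and take for $\cL$ the largest $\cA$-invariant subspace contained in $\cY+\cF$. The point of enlarging $\cY$ is that the largest $\cA$-invariant subspace sitting inside $\cY$ \emph{itself} need not be finite-codimensional in $\cY$ (it may even be $\{0\}$), so it cannot be used directly. Concretely, I would fix an idempotent $E$ on $\cX$ with range $\cY$, put $Q=I-E$ and $\cZ=Q\cX$, so that $\cX=\cY\oplus\cZ$ and, $\cY$ being a half-space, $\dim\cZ=\codim_\cX\cY=\infty$. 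For $T\in\cA$ let $C_T=QT|_\cY\in\cB(\cY,\cZ)$. By the elementary lemma recalled just before Section~\ref{sec2}, $\cY$ is almost-invariant for $T$ iff $\mathrm{rank}\,C_T<\infty$, the defect being $\mathrm{rank}\,C_T$; and since $\cA$ is a norm-closed algebra with common almost-invariant subspace $\cY$, there is a uniform bound $\mathrm{rank}\,C_T\le k$ ($k\in\bbN$) valid for all $T\in\cA$, by the result of~\cite{Popov2010} recalled in the introduction. As $T\mapsto C_T$ is linear, $\cV:=\{C_T:T\in\cA\}$ is a linear subspace of $\cB(\cY,\cZ)$ all of whose members have rank at most $k$.

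The technical heart of the argument, and the step I expect to be the main obstacle, is the following lemma about such operator spaces: \emph{if $\cV\subseteq\cB(\cW,\cZ)$ is a linear subspace and $\mathrm{rank}\,V\le k$ for every $V\in\cV$, then there exist a closed finite-codimensional subspace $\cW'\subseteq\cW$ and a finite-dimensional subspace $\cZ'\subseteq\cZ$ with $V\cW'\subseteq\cZ'$ for all $V\in\cV$.} I would prove this by induction on $k$; for $k=0$ take $\cW'=\cW$, $\cZ'=\{0\}$. For the step, if every $V\in\cV$ has rank $<k$ invoke the inductive hypothesis; otherwise fix $V_0\in\cV$ of rank exactly $k$, set $\cW_0=\ker V_0$ (closed, of codimension $k$) and $\cZ_0=\ran V_0$ ($\dim\cZ_0=k$), and pass to the induced linear space $\widehat\cV=\{\widehat V:V\in\cV\}\subseteq\cB(\cW_0,\cZ/\cZ_0)$, where $\widehat V(y)=Vy+\cZ_0$. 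The key computation is that every $\widehat V$ has rank $\le k-1$: if some $\widehat V_1$ had rank $k$, then $V_1(\cW_0)=V_1(\cW)$ would be $k$-dimensional with $V_1(\cW_0)\cap\cZ_0=\{0\}$, and then for $t\ne 0$ the operator $V_0+tV_1\in\cV$ would carry $\cW_0$ onto the $k$-dimensional space $V_1(\cW_0)$ while sending any $y_0$ with $V_0y_0\ne 0$ to a vector with nonzero $\cZ_0$-component, forcing $\mathrm{rank}\,(V_0+tV_1)\ge k+1$ and contradicting the rank bound. Applying the inductive hypothesis to $\widehat\cV$ yields $\cW'\subseteq\cW_0$ and $\widehat\cZ\subseteq\cZ/\cZ_0$; setting $\cZ'=q^{-1}(\widehat\cZ)$ for the quotient map $q\colon\cZ\to\cZ/\cZ_0$ (so $\dim\cZ'=k+\dim\widehat\cZ<\infty$) gives $V\cW'\subseteq\cZ'$ for all $V\in\cV$, and $\cW'$ is closed and finite-codimensional in $\cW$, completing the induction.

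Applying this lemma to $\cV=\{C_T\}$ I obtain a closed finite-codimensional $\cY_1\subseteq\cY$ and a finite-dimensional $\cZ_1\subseteq\cZ$ with $C_T\cY_1\subseteq\cZ_1$ for all $T\in\cA$. I then set $\cF:=\cZ_1$ and $\cY':=\cY+\cF=\cY\oplus\cZ_1$ (closed), and let $\cL:=\{x\in\cY':Tx\in\cY'\text{ for all }T\in\cA\}$, the largest $\cA$-invariant subspace contained in $\cY'$. It is $\cA$-invariant because $\cA$ is an algebra: for $x\in\cL$ and $S\in\cA$ one has $Sx\in\cY'$ and, for every $T\in\cA$, $(TS)x\in\cY'$ since $TS\in\cA$, whence $Sx\in\cL$. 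For $y\in\cY_1$ the lemma gives $Ty=ETy+C_Ty\in\cY+\cZ_1=\cY'$, so $\cL\supseteq\cY_1\oplus\cZ_1^{0}$, where $\cZ_1^{0}:=\{z\in\cZ_1:Tz\in\cY'\text{ for all }T\in\cA\}$; as $\cY_1\oplus\cZ_1^{0}$ is finite-codimensional in $\cY'$, so is $\cL$. Finally $\cL\supseteq\cY_1$ is infinite-dimensional (being finite-codimensional in the infinite-dimensional $\cY$), while $\cL\subseteq\cY\oplus\cZ_1$ forces $\codim_\cX\cL\ge\codim_\cX(\cY\oplus\cZ_1)=\dim\cZ-\dim\cZ_1=\infty$; hence $\cL$ is a half-space. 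Thus $\cL$ is an $\cA$-invariant half-space that is finite-codimensional in $\cY+\cF$, which is exactly the assertion of the theorem. The one place where a nontrivial technical input enters is the rank-counting induction of the second paragraph; everything else (choice of complement, $\cA$-invariance of $\cL$, the half-space bookkeeping) is routine.
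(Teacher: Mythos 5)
Your proof is correct, and it does reach the same structural conclusion as the paper's proof --- namely, a closed finite-codimensional $\cY_1\subseteq\cY$ and a finite-dimensional $\cZ_1\subseteq\cZ$ with $(I-E)T\cY_1\subseteq\cZ_1$ for every $T\in\cA$ --- but by a genuinely different mechanism, and it also packages the final step differently.

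Where the two arguments diverge is in how this pair $(\cY_1,\cZ_1)$ is produced. The paper fixes a single operator $T\in\cA$ achieving the maximal defect $\kappa$, sets $\cV_1=\ker T_3$ and $\cW_1=\ran T_3$ (with $T_3$ the off-diagonal corner of that one $T$), and then shows in one step that $L\cV_1\subseteq\cW_1$ for \emph{every} off-diagonal corner $L$ arising from $\cA$. That one step is driven by Lemma~\ref{lem4.43aBanach}, which pins down the remaining corner $X$ by the formula $X=AEB$; applied to $L+\alpha T_3$ and letting $|\alpha|\to\infty$, it forces $L_3=0$. You, by contrast, establish a general structure lemma --- any linear space of operators of rank $\le k$ can be ``squeezed'' into a corner $\cB(\cW/\cW',\cZ')$ with $\codim\cW'<\infty$ and $\dim\cZ'<\infty$ --- and prove it by an induction that peels off one maximal-rank element at a time, showing only that the induced operators modulo $\ran V_0$ drop rank by at least one. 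This avoids Lemma~\ref{lem4.43aBanach} entirely and is a pleasant, essentially self-contained linear-algebra argument, at the cost of iterating (up to $k$ times) where the paper's limiting argument finishes in one pass; in particular your finite-dimensional piece $\cZ_1$ a priori has dimension $O(k^2)$ rather than the paper's $\kappa$. Finally, rather than assuming $\cA$ unital and taking $\cL=\overline{\cA\cV_1}$, you take $\cL$ to be the largest $\cA$-invariant subspace contained in $\cY+\cF$ and verify directly that it contains $\cY_1$; this sidesteps the normalization and makes the half-space bookkeeping transparent. Both routes are valid; yours trades the paper's slicker one-shot rank identity for a more elementary, more general inductive lemma.
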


\smallskip

\noindent{\textbf{Remark.}}  Informally, one could think of $\cL$ as lying ``within a finite-dimensional subspace" of $\cY$.
\smallskip

\begin{pf}
Clearly we may assume without loss of generality that $\cA$ is unital. Let $E: \cX \to \cX$ be a continuous idempotent map with range equal to $\cY$. The kernel $\cZ$  of $E$ (equal to the range of $I-E$) serves as a topological complement to $\cY$. 

Consider the set $\cS := \{ (I-E) T E: T \in \cA\}$ as a subspace of $\cB(\cY, \cZ)$. Observe that by Theorem~2.7 of~\cite{Popov2010}, there exists $1 \le \kappa < \infty$ such that 
\[
\max_{T \in \cA} \mathrm{rank}\, \big((I-E) T E\big)= \kappa. \]
Fix an element $T \in \cA$ such that $\mathrm{rank} ((I-E) T E) = \kappa$.   

Write $T = \begin{bmatrix} T_1 & T_2 \\ T_3 & T_4 \end{bmatrix}$ relative to the decomposition $\cX = \cY \oplus \cZ$, so that $T_3~=$\linebreak $(I-E)~T~E|_{E \cX}$.   Now $\cW_1 := \ran\, T_3$ is a finite-dimensional subspace of $\cZ$, and hence is complemented in that space, say $\cZ = \cW_1 \oplus \cW_2$.  Similarly, $\cV_1 := \ker\, T_3$ is a finite-codimensional subspace of $\cY$, and thus is complemented there, say $\cY = \cV_1 \oplus \cV_2$.   With respect to these decompositions, we may write
\[
T_3 = 
\bordermatrix{\ & \cV_1 &  \cV_2 \cr \cW_1 & 0 & T_{3 2} \cr \cW_2 & 0 & 0 }. \] 
Furthermore, $T_{3 2}$ is an injective map from $\cV_2$ onto $\cW_1$, and thus is an invertible element of $\cB(\cV_2, \cW_1)$.
We claim that $S \in \cS$ implies that $S \cV_1 \subseteq \cW_1$.   

Let $L \in \cS$, and write 
\[
L = \bordermatrix{\ & \cV_1 &  \cV_2 \cr \cW_1 & L_1 & L_2 \cr \cW_2 & L_3 & L_4 }. \]
Then, for all $\alpha \in \bbC$, $L + \alpha T_{3} \in \cS$, and thus $\mathrm{rank} (L + \alpha T_{3}) \le \kappa$.  In fact, this has rank equal to $\mathrm{rank}\, T_{3 2} = \kappa$ for all but finitely many values of $\alpha$, since $\mathrm{rank}\, (L_2 + \alpha T_{3 2}) = \kappa$ for all but finitely many values of $\alpha \in \bbC$.  By Lemma~\ref{lem4.43aBanach}, for all but finitely many values of $\alpha$, 
\[
L_3 = L_4 (L_2 + \alpha T_{3 2})^{-1} L_1.\]
By letting the absolute value of $\alpha$ tend to infinity, we see that $\norm (L_2 + \alpha T_{3 2})^{-1} \norm$ tends to zero, and thus $L_3$ tends to zero.   But $\norm L_3 \norm$ is constant, a contradiction unless $L_3 = 0$.   This proves the claim.

This allows us to write each element of the algebra $\cA$ in the form
\[
\bordermatrix{\ & \cV_1 & \cV_2 &  \cW_1 & \cW_2 \cr \cV_1 & * & * & * & * \cr \cV_2 & * & * & * & * \cr \cW_1 & * & * & * & * \cr \cW_2 & 0 & * & * & * }. \] 

Now consider the space $\cL :=\overline{\cA\cV_1}\subseteq\cX$. Clearly, $\cL$ is $\cA$-invariant. Also, since $\cA$ is unital, 	$\cV_1 \subseteq\cL$, so that $\cL$ is infinite-dimensional. On the other hand, it follows from the above matrix form of operators in $\cA$ and the fact that the space $\cY\oplus\cW_1$ is closed that $\cL=\overline{\cA(\cV_1)}\subseteq\cV_1\oplus\cV_2\oplus\cW_1=\cY\oplus\cW_1$. Since $\cX=\cY\oplus\cW_1\oplus\cW_2$ and $\dim\cW_2=\infty$, the space $\cL$ is of infinite codimension. Therefore, $\cL$ is a half-space.  The final statement of the theorem is easy to verify. 
\end{pf}

The next Corollary follows from Theorem~\ref{thm3.3Banach} and the fact that  every closed subspace of a Hilbert space is topologically complemented.  Observe that the Corollary applies, in particular, to any $C^*$-subalgebra of $\bofh$, whose invariant half-spaces are automatically reducing.

\begin{cor} \label{cor3.4}
Let $\cA \subseteq \cB(\cH)$ be a norm-closed algebra of operators on $\cH$.  Let $\cY$ be a closed half-space of $\cH$ and suppose that for each $T \in \cA$, there exists a finite-dimensional subspace $\cM_T$ of $\hilb$ so that $T \cY \subseteq \cY + \cM_T$.   Then $\cA$ admits an invariant half-space.
\end{cor}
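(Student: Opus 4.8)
The plan is to obtain this as an immediate specialization of Theorem~\ref{thm3.3Banach}. The only hypothesis in that theorem which is not literally stated here is that the half-space $\cY$ be topologically complemented in the ambient space; but in a Hilbert space every closed subspace $\cY$ satisfies $\cH = \cY \oplus \cY^\perp$, so complementation is automatic, with the orthogonal projection of $\cH$ onto $\cY$ serving as the continuous idempotent $E$ in the proof of Theorem~\ref{thm3.3Banach}. Thus I would first note this, then simply invoke Theorem~\ref{thm3.3Banach} to produce an $\cA$-invariant half-space $\cL$; the additional conclusion there (that $\cL$ sits inside a finite-dimensional extension of $\cY$) comes along for free but need not be restated.

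There is essentially no obstacle here: all the real work is already contained in Theorem~\ref{thm3.3Banach}, which in turn rests on the uniform-defect bound of~\cite[Theorem~2.7]{Popov2010} and on Lemma~\ref{lem4.43aBanach}. If one preferred a self-contained argument, one could instead re-run the proof of Theorem~\ref{thm3.3Banach} verbatim, taking $E$ to be the orthogonal projection onto $\cY$ and choosing all the auxiliary splittings ($\cZ = \cY^\perp$, and the decompositions of $\cY$ and of $\cY^\perp$ into the kernel/range pieces of $T_3$) to be orthogonal direct sums; this is purely cosmetic and yields nothing new.

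Finally, I would add the one-line observation already anticipated in the text preceding the statement: when $\cA$ happens to be a $C^*$-subalgebra of $\bofh$, the invariant half-space $\cL$ so obtained is automatically reducing for $\cA$, since for a self-adjoint collection of operators every invariant subspace is reducing. This requires no argument beyond that remark.
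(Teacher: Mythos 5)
Your proof is correct and is exactly the argument the paper gives: the corollary follows from Theorem~\ref{thm3.3Banach} once one observes that every closed subspace of a Hilbert space is topologically complemented (via the orthogonal projection), and the remark about $C^*$-algebras yielding reducing subspaces is also made in the text immediately preceding the corollary.
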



\section{almost-invariant subspaces in masas}\label{sec4}

\subsection{} \label{sec4.1}
Let us first establish some notation that will be used throughout this section.
By $\mathcal H$ we shall denote a separable Hilbert space,  and $\cD \subseteq \bofh$ will denote a maximal abelian, selfadjoint subalgebra (i.e. a masa) of $\bofh$.      

If $(X,\Sigma,\mu)$ is a measure space and $\hilb = L^2(X, \mu)$, then the separability of $\hilb$ implies that $\mu$ is $\sigma$-finite.    Moreover, the set $\cD = \{ M_f: f \in L^\infty(X, \mu)\}$, where $M_f: L^2(X, \mu) \to L^2(X, \mu)$ is the operator $M_f g = f g$, is a masa in $\cB(L^2(X, \mu))$.   As is well-known, given any masa $\cD$ in $\bofh$, there exists a measure space $(X, \mu)$ so that $\hilb$ is unitarily equivalent to $L^2(X, \mu)$ and $\cD$ is unitarily equivalent to the masa $\{ M_f : f \in L^\infty(X, \mu)\}$ defined above.

For $E\subseteq X$  a measurable set, $E^c$ will denote the complement of $E$ in $X$, and the projection 
\[
\begin{array}{ccc}
L^2(X, \mu)& \to &  L^2(X, \mu)\\ 
 f &\mapsto &\chi_E\cdot f,
\end{array}\]
 will be denoted by $P_E$. Also, if $f,g\in L^2(X, \mu)$  then the rank-one operator $h\mapsto\langle h,g\rangle f$ in $\cB(L^2(X, \mu))$ will be denoted by $f\otimes g^*$.  

Let $\cD \subseteq \bofh$ be  a masa.  We shall denote by $\cP(\cD)$ the set of orthogonal projections in $\cD$. Suppose that $D \in \cD$ and that $F \in \cF(\cH)$ has rank $n < \infty$.   Let $T = D + F$.   It is clear that if $P \in \cP(\cD)$, then 
\[ 
\mathrm{rank}\, (T P - P T P) = \mathrm{rank} (I - P) F P \le n < \infty. \]
Our goal in this section is to provide a converse to this result.   
More specifically, wiith $\cD \subseteq \bofh$ a masa as above, suppose that  $T \in \bofh$ is such that every projection $P \in \cP(\cD)$ is almost-invariant for $T$.  We will show that this implies that  $T = D + F$ for some  $D \in \cD$ and $F \in \cF(\hilb)$.

\bigskip
We begin with a technical lemma.


\begin{lem}\label{sets-manipulation} 
Let $(X,\Sigma,\mu)$ be a measure space. Suppose that $E\in\Sigma$, $T\in\mathcal B(L^2(X, \mu))$, and $f_1,\dots,f_n\in L^2(X, \mu)$ are such that the set $\{P_{E^c}TP_Ef_k\}_{k=1}^n$ is linearly independent. 
Then there exists $\varepsilon>0$ such that for all $A\subseteq E^c$ and $B\subseteq E$ in $\Sigma$ with $\mu(A),\mu(B)<\varepsilon$, the set $\{P_{E^c\setminus A}TP_{E\setminus B}f_k\}_{k=1}^n$ is linearly independent.
\end{lem}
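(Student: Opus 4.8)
The plan is to exploit finite-dimensionality of the span $\cV := \mathrm{span}\{P_{E^c}TP_Ef_k\}_{k=1}^n$ together with continuity of the maps $A\mapsto P_{E^c\setminus A}$ and $B\mapsto P_{E\setminus B}$ in an appropriate sense. Write $g_k := P_{E^c\setminus A}TP_{E\setminus B}f_k$ and $h_k := P_{E^c}TP_Ef_k$. We have
\[
h_k - g_k = P_{E^c}TP_Ef_k - P_{E^c\setminus A}TP_{E\setminus B}f_k
= P_A TP_E f_k + P_{E^c\setminus A}T P_B f_k,
\]
using $P_{E^c} = P_A + P_{E^c\setminus A}$ and $P_E = P_B + P_{E\setminus B}$ (for $A\subseteq E^c$, $B\subseteq E$). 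So $\norm h_k - g_k\norm \le \norm P_A TP_E f_k\norm + \norm T\norm\,\norm P_B f_k\norm$. Now $\norm P_B f_k\norm^2 = \int_B \abs{f_k}^2\,d\mu \to 0$ as $\mu(B)\to 0$ by absolute continuity of the integral (since $f_k\in L^2$), and similarly $\norm P_A TP_E f_k\norm^2 = \int_A \abs{TP_Ef_k}^2\,d\mu\to 0$ as $\mu(A)\to 0$ since $TP_Ef_k\in L^2$. Hence, given any $\delta>0$, there is $\varepsilon>0$ so that $\mu(A),\mu(B)<\varepsilon$ forces $\norm h_k-g_k\norm<\delta$ for every $k=1,\dots,n$ simultaneously (finitely many conditions).

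The second ingredient is that linear independence of a finite tuple in a Banach space is an open condition in the sense that small perturbations preserve it. Concretely, since $\{h_1,\dots,h_n\}$ is linearly independent, the map $\Phi:\bbC^n\to L^2(X,\mu)$, $(a_1,\dots,a_n)\mapsto \sum a_k h_k$, is an injective linear map on the finite-dimensional space $\bbC^n$, hence bounded below: there is $c>0$ with $\norm{\sum a_k h_k}\ge c\norm a\norm_\infty$ for all $a\in\bbC^n$. Then for any tuple $(g_k)$ with $\max_k\norm h_k-g_k\norm<\delta$, we estimate, for $a$ with $\norm a\norm_\infty=1$,
\[
\Bignorm{\sum_k a_k g_k} \ge \Bignorm{\sum_k a_k h_k} - \sum_k\abs{a_k}\,\norm{h_k-g_k} \ge c - n\delta,
\]
which is positive once $\delta < c/n$. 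Therefore $\sum a_k g_k\ne 0$ for all nonzero $a$, i.e.\ $\{g_1,\dots,g_n\}$ is linearly independent.

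Combining the two steps: choose $\delta := c/(2n)$ where $c$ is the lower bound for $\Phi$, then invoke the first paragraph to produce $\varepsilon>0$ such that $\mu(A),\mu(B)<\varepsilon$ (with $A\subseteq E^c$, $B\subseteq E$) gives $\max_k\norm{P_{E^c}TP_Ef_k - P_{E^c\setminus A}TP_{E\setminus B}f_k}<\delta$, and conclude linear independence of $\{P_{E^c\setminus A}TP_{E\setminus B}f_k\}_{k=1}^n$ as above. I do not anticipate a serious obstacle here; the only mild point of care is that one must use absolute continuity of $\mu$-integrals of the \emph{fixed} $L^2$ functions $\abs{f_k}^2$ and $\abs{TP_Ef_k}^2$ (legitimate since $\mu$ is $\sigma$-finite, as noted in the text), rather than any uniform statement, and that the finitely many smallness requirements can all be met by taking $\varepsilon$ to be the minimum of finitely many positive numbers.
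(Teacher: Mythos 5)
Your proof is correct, and it implements essentially the same idea as the paper's, just unpacked into explicit estimates rather than phrased through soft operator-theoretic facts. The paper argues by contradiction: taking sequences $(A_k),(B_k)$ with $\mu(A_k),\mu(B_k)\to 0$, it notes $P_{E^c\setminus A_k}TP_{E\setminus B_k}\to P_{E^c}TP_E$ in the strong operator topology, hence in norm once restricted to the finite-dimensional $\cM=\mathrm{span}\{f_k\}$, and then invokes lower semicontinuity of rank to contradict linear independence. You prove the same norm-continuity directly from absolute continuity of the indefinite integrals of $\abs{f_k}^2$ and $\abs{TP_Ef_k}^2$, and replace ``lower semicontinuity of rank'' by the equivalent quantitative fact that an injective linear map from $\bbC^n$ is bounded below, so linear independence is stable under sufficiently small perturbations. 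Either packaging is fine; yours buys an explicit $\varepsilon$--$\delta$ statement at the cost of a few more lines, while the paper's is shorter because it outsources both steps to standard facts. One minor remark: absolute continuity of $\int_A\abs{g}^2\,d\mu$ in $\mu(A)$ needs only $g\in L^2(\mu)$, not $\sigma$-finiteness, so that caveat in your last paragraph is unnecessary.
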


\begin{proof}
Denote the space $\mathrm{span}\{f_1,\dots,f_n\}$ by~$\cM$. Assume that there are sequences $(A_k)$, $(B_k)$ of measurable sets such that $\mu(A_k),\mu(B_k)\to 0$ and $\mathrm{rank}(P_{E^c\setminus A_k}TP_{E\setminus B_k}|_{\cM})\le ~n~-~1$.   Observe that the sequence $(P_{E^c\setminus A_k}TP_{E\setminus B_k})$ converges to $P_{E^c}TP_{E}$ in the strong operator topology. In particular, $P_{E^c\setminus A_k}TP_{E\setminus B_k}|_\cM\overset{SOT}{\longrightarrow}P_{E^c}TP_{E}|_\cM$. However, $\cM$ is finite-dimensional, hence $P_{E^c\setminus A_k}TP_{E\setminus B_k}|_\cM \overset{\norm{\cdot}\norm}{\longrightarrow}P_{E^c}TP_{E}|_\cM$. It follows from the lower semicontinuity of the rank that $\mathrm{rank}(P_{E^c}TP_{E}|_\cM)\le n-1$, contrary to the assumptions of the lemma.
\end{proof}


\subsection{} \label{sec4.3}

As pointed out above, a necessary condition for an operator $T$ to be expressible in the form $T = D + F$ with $D \in \cD$ and $F$ a finite-rank operator is the existence of an integer $n$ (which may be chosen to be $\mathrm{rank}\, F$) so that  
\[
\sup_{P \in \cP(\cD)} \mathrm{rank}\, (I-P) T P \le  n < \infty.  \]
Our first goal therefore is to show that such an integer always exists.

\bigskip

\begin{thm}\label{masa-bounded-defect}
Let $\mathcal D$ be a masa in $\mathcal B(\mathcal H)$ and $T\in\mathcal B(\mathcal H)$. Suppose that for every projection $P\in\mathcal D$ onto a half-space $\cY_P$, the space $\cY_P$ is almost-invariant under $T$. Then there exists $\kappa\in\mathbb N$ such that $\mathrm{rank}(TP-PTP)\le\kappa$ for all projections $P\in\mathcal D$ (including the projections onto finite-dimensional or finite-codimensional subspaces).
\end{thm}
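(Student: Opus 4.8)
The plan is to argue by contradiction, producing a \emph{single} half-space projection in $\cD$ whose range fails to be almost-invariant for $T$. Realise $\cD$ as $\{M_f:f\in L^\infty(X,\mu)\}$ on $\hilb=L^2(X,\mu)$; since $\hilb$ is separable $\mu$ is $\sigma$-finite, and replacing $\mu$ by an equivalent finite measure (which does not change the form of the masa or of the projections $P_E$) we may assume $\mu(X)<\infty$. For measurable $E$ put $r(E):=\mathrm{rank}(TP_E-P_ETP_E)=\mathrm{rank}(P_{E^c}TP_E)$. Two elementary facts: \emph{(a)} $r(E)<\infty$ for \emph{every} measurable $E$ --- if $L^2(E)$ is finite-dimensional then $r(E)\le\dim L^2(E)$, if $L^2(E^c)$ is finite-dimensional then $r(E)\le\dim L^2(E^c)$, and otherwise $P_E$ projects onto a half-space, so $r(E)<\infty$ by hypothesis; \emph{(b)} $r$ is countably subadditive on disjoint unions, since if $E=\bigsqcup_jE_j$ then $P_{E^c}TP_{E_j}=P_{E^c}P_{E_j^c}TP_{E_j}$ has rank $\le r(E_j)$, whence $r(E)\le\sum_jr(E_j)$. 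It is convenient to set $P(E):=\sup\{r(A):A\subseteq E\}$, a monotone and countably subadditive set function, and to note that $P(X)=\sup_E r(E)$ is exactly the quantity we must show to be finite.

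The first reduction is that it suffices to produce $\delta>0$ with $\sup\{r(E):\mu(E)<\delta\}<\infty$: splitting $X$ into $m=\lceil\mu(X)/\delta\rceil$ sets of measure $<\delta$ and applying \emph{(b)} gives $r(E)\le m\sup\{r(A):\mu(A)<\delta\}$ for every $E$, and since every projection in $\cP(\cD)$ --- including those of finite or of cofinite rank --- is some $P_E$, this is the asserted uniform bound. So assume, for contradiction, that $\sup\{r(E):\mu(E)<\delta\}=\infty$ for every $\delta>0$; equivalently (again by \emph{(b)}, partitioning $X$) that $P(X)=\infty$.

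Next I would construct the offending half-space as $E^*=\bigcup_nE_n$ for an increasing sequence $\emptyset=E_0\subseteq E_1\subseteq\cdots$ with $r(E_n)\ge n$ and with the increments $\mu(E_{n+1}\setminus E_n)$ shrinking fast. Having $E_n$ with $r(E_n)\ge n$, fix $f_1,\dots,f_n\in L^2(E_n)$ with $\{P_{E_n^c}TP_{E_n}f_k\}_{k=1}^n$ linearly independent and let $\varepsilon_n>0$ be the constant that Lemma~\ref{sets-manipulation} attaches to $E_n$ and these functions; then choose $E_{n+1}\supseteq E_n$ with $r(E_{n+1})\ge n+1$ and $\mu(E_{n+1}\setminus E_n)<2^{-(n+1-j)}\varepsilon_j$ for all $1\le j\le n$, so that $\sum_{m>n}\mu(E_m\setminus E_{m-1})<\varepsilon_n$. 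For each $n$ the set $A:=E^*\setminus E_n$ then lies in $E_n^c$ with $\mu(A)<\varepsilon_n$, so Lemma~\ref{sets-manipulation} (applied with this $A$ and with nothing removed from $E_n$) shows that $\{P_{E_n^c\setminus A}TP_{E_n}f_k\}_{k=1}^n=\{P_{(E^*)^c}TP_{E^*}f_k\}_{k=1}^n$ is still linearly independent, whence $r(E^*)\ge n$. Thus $r(E^*)=\infty$; but $r(E^*)\le\min\{\dim L^2(E^*),\dim L^2((E^*)^c)\}$, so both subspaces are infinite-dimensional, $P_{E^*}$ is a half-space projection in $\cD$, and $\mathrm{rank}(TP_{E^*}-P_{E^*}TP_{E^*})=\infty$ contradicts the hypothesis. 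This gives the required $\delta$, and with it $\kappa$.

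The step I expect to be the real obstacle is the inductive choice of $E_{n+1}$: one must add to $E_n$ a set of \emph{prescribed small measure} that nonetheless \emph{raises} the defect, and do so without killing off the construction at later stages. For the latter one carries along the invariant $P(E_n^c)=\infty$. The governing estimate is, for any $B\subseteq G^c$: since $P_B=P_{G^c}P_B$ one has $\mathrm{rank}(P_GTP_B)\le r(G^c)$, and since $P_B\le P_{G\cup B}$ and $G^c\setminus B=(G\cup B)^c$ one has $\mathrm{rank}(P_{G^c\setminus B}TP_B)\le r(G\cup B)$, so that
\[
r(B)\le r(G^c)+r(G\cup B)\qquad(B\subseteq G^c,\ r(G^c)<\infty).
\]
Hence, when $P(G^c)=\infty$, partitioning $G^c$ into pieces of measure $<\eta$ and using countable subadditivity of $P$ produces a piece of infinite potential, inside which one picks $B$ with $\mu(B)<\eta$ and $r(B)$ --- hence $r(G\cup B)\ge r(B)-r(G^c)$ --- as large as desired. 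To preserve the invariant one needs $P(G^c\setminus B)=\infty$, which follows from $P(G^c)\le P(G^c\setminus B)+P(B)$ once $B$ is also chosen with $P(B)<\infty$. Arranging simultaneously that $\mu(B)$ is small, $r(B)$ is large, and $P(B)$ is finite is the technical heart of the proof; it is handled by a further, possibly recursive, application of countable subadditivity of $P$, isolating within $G^c$ a small set carrying the large defect while retaining only finite potential. Once this bookkeeping is organised cleanly the induction, and with it the theorem, goes through.
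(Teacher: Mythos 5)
Your overall plan---argue by contradiction, build an exceptional set as a limit of finitely-modified approximants, and use Lemma~\ref{sets-manipulation} to stabilise the linear independence of the ``witnesses'' $P_{E^c}TP_E f_k$ under small perturbations of $E$---is exactly the spirit of the paper's proof. But there is a genuine gap at the step you yourself flag as ``the technical heart.'' You insist on an \emph{increasing} sequence $E_0\subseteq E_1\subseteq\cdots$ and so must, at each stage, augment $E_n$ by a set $B\subseteq E_n^c$ of prescribed small measure which raises the defect $r(E_n\cup B)=\mathrm{rank}(P_{(E_n\cup B)^c}TP_{E_n\cup B})$, while simultaneously preserving $P(E_n^c\setminus B)=\infty$. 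You propose to do this by choosing $B$ with $P(B)<\infty$, ``isolating within $G^c$ a small set carrying the large defect while retaining only finite potential.'' But countable subadditivity of $P$ alone does not produce such a $B$: in a partition of $E_n^c$ into small pieces there may be exactly one piece carrying $P=\infty$, and recursing inside that piece can ``chase the concentration'' forever without ever detaching a set of large defect but finite potential. The step can in fact be salvaged---one can show $P$ is continuous along increasing unions (because rank is lower semicontinuous in the strong topology), and this, rather than subadditivity, is what forces the recursion to terminate---but that ingredient is absent from your sketch, so as written the inductive construction does not close.

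The paper avoids this entirely by allowing the approximating sets to move in \emph{both} directions. From the hypothesised unboundedness it extracts, for each $n$ and $\varepsilon>0$, a pair of \emph{disjoint} small sets $E_{n+1},F_{n+1}$ with $\mathrm{rank}(P_{F_{n+1}}TP_{E_{n+1}})\ge n+1$ --- found anywhere in $X$, not inside a prescribed reservoir --- and then sets $Q_{n+1}=(Q_n\cup E_{n+1})\setminus F_{n+1}$. Since $E_{n+1}\subseteq Q_{n+1}$ and $F_{n+1}\subseteq Q_{n+1}^c$, the defect of $Q_{n+1}$ is automatically $\ge n+1$, and no invariant on $Q_n^c$ need be carried along; Lemma~\ref{sets-manipulation} then stabilises all previously chosen witnesses exactly as in your final-union argument. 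I'd also note two smaller matters: your reduction ``split $X$ into $m$ sets of measure $<\delta$'' tacitly assumes atomlessness (atoms of large measure must be peeled off separately, each contributing at most $1$ to the rank), and the paper handles the atomic and atomless parts of the measure separately for precisely this reason; and your bound $r(B)\le r(G^c)+r(G\cup B)$ is correct, but it is stated only as motivation---the construction must still arrange all three constraints on $B$ at once, and this is where the argument stalls.
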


\begin{proof}
Let us choose a measure space $(X, \Sigma, \mu)$ such that $\mathcal B(\mathcal H) = L^2(X, \mu)$ and $\mathcal D = \{ M_f: f \in L^\infty(X, \mu)\}$ is the set of multiplication operators on $ L^2(X, \mu)$. We may next decompose $L^2(X, \mu) = L^2(X_d, \mu_d)\oplus L^2(X_c, \mu_c)$ where the measure $\mu_d$ is purely atomic and $\mu_c$ is atomless. Relative to this decomposition of $\mathcal B(\mathcal H)$, $T$ can be written in the matrix form
$$
\left[\begin{matrix}
T_1 & T_2\\
F & T_4
\end{matrix}\right],
$$
where $\mathrm{rank}(F)<\infty$ (and $\mathrm{rank}\, {T_2} < \infty$, though we shall not need this). The above decomposition of $L^2(X, \mu)$ induces a decomposition of the algebra $\cD$ as  $\cD = \cD_d + \cD_c$, where  $P_d$ (resp. $P_c$) is the orthogonal projection of $L^2(X, \mu)$ onto $L^2(X_d, \mu_d)$ (resp. $L^2(X_c, \mu_c)$), $\mathcal D_d = P_d \cD$ and $\cD_c = P_c \cD$.    If we can show that there are $\kappa_1,\kappa_2\in\mathbb N$ such that $\mathrm{rank}(T_1P-PT_1P)\le\kappa_1$ and $\mathrm{rank}(T_4Q-QT_4Q)\le\kappa_2$ for all projections $P\in L^2(X, \mu_d)$ and $Q \in L^2(X, \mu_c)$, then $\kappa=\kappa_1+\kappa_2+\mathrm{rank}(F)$ will satisfy the conclusion of the theorem. So, we may assume without loss of generality that $\mu$ is either purely atomic or atomless. We consider three cases.

\emph{Case 1}: $\mathcal H= L^2(X, \mu)$ where $\mu$ is an atomless, finite measure. 

Suppose that $\{\mathrm{rank}(TP_E-P_ETP_E)\mid E\mbox{  measurable}\}$ is unbounded. In particular, there exists a measurable set $E_1$ such that $P_{E_1^c}TP_{E_1}\ne 0$. That is, there exists $f_{11}\in\mathcal H$ such that $\mathrm{supp}\,f_{11}\subseteq E_1$ and $P_{E^c_1}Tf_{11}\ne 0$.

Observe that, since $\mu$ is finite and atomless, the condition of unboundedness of the set $\{\mathrm{rank}(TP_E-P_ETP_E)\mid E\mbox{  measurable}\}$ implies that, given $N\in\mathbb N$ and $\varepsilon>0$ there exist $F,G\in\Sigma$ such that $F\cap G=\varnothing$, $\mu(F),\mu(G)<\varepsilon$, and $\mathrm{rank}(P_FTP_G)\ge N$.

Put $Q_1=E_1$. By Lemma~\ref{sets-manipulation}, there exists $ \varepsilon_1 > 0$ such that $P_{Q_1^c\setminus A}TP_{Q_1\setminus B}f_{11}\ne 0$ for all measurable sets $A$ and $B$ satisfying $\mu(A)$, $\mu(B)<\varepsilon_1$. By the observation above, we can find disjoint sets $E_2$ and $F_2$ such that $\mu(E_2)$, $\mu(F_2)<\frac{\varepsilon_1}{2}$ and $\mathrm{rank}(P_{F_2}TP_{E_2})\ge 2$. That is, there are $f_{21}$ and $f_{22}$ in $\mathcal H$ such that $\mathrm{supp}\,f_{21}$, $\mathrm{supp}\,f_{22}\subseteq E_{2}$ and $\{P_{F_2}Tf_{21},P_{F_2}Tf_{22}\}$ is linearly independent.

Put $Q_2=(Q_1\cup E_2)\setminus F_2$. Then $E_2\subseteq Q_2$ and $F_2\subseteq Q_2^c$, hence the set $\{P_{Q_2^c}TP_{Q_2}f_{2i}\}_{i=1}^2$ is linearly independent. By Lemma~\ref{sets-manipulation}, there exists $0<\varepsilon_2<\frac{\varepsilon_1}{2}$ such that the set $\{P_{Q_2^c\setminus A}TP_{Q_2\setminus B}f_{2i}\}_{i=1}^2$ is linearly independent for all measurable sets $A$ and $B$ satisfying $\mu(A),\mu(B)<\varepsilon_2$. Again, fix disjoint sets $E_3$ and $F_3$ such that $\mu(E_3)$, $\mu(F_3)<\frac{\varepsilon_2}{2}$ and $\mathrm{rank}(P_{F_3}TP_{E_3})\ge 3$. There exist $f_{31}$, $f_{32}$, and $f_{33}$ such that $\mathrm{supp}\,f_{3i}\subseteq E_{3}$ for $i=1,2,3$ and the set $\{P_{F_3}Tf_{3i}\}_{i=1}^3$ is linearly independent. 

Continue this process indefinitely. On the $n$-th step, use Lemma~\ref{sets-manipulation} to find $0<\varepsilon_{n-1}<\frac{\varepsilon_{n-2}}{2}$ such that the set $\{P_{Q_{n-1}^c\setminus A}TP_{Q_{n-1}\setminus B}f_{n-1,i}\}_{i=1}^{n-1}$ is linearly independent whenever $A$ and $B$ are such that $\mu(A)$, $\mu(B)<\varepsilon_{n-1}$. Fix disjoint sets $E_n$ and $F_n$ such that $\mu(E_n)$, $\mu(F_n)<\frac{\varepsilon_{n-1}}{2}$ and $\mathrm{rank}(P_{F_n}TP_{E_n})\ge n$; then there are $\{f_{ni}\}_{i=1}^n$ such that $\mathrm{supp}\,f_{ni}\subseteq E_n$ for all $i=1,2,\dots,n$, and the set $\{P_{F_n}Tf_{ni}\}_{i=1}^n$ is linearly independent. Finally, define $Q_n=(Q_{n-1}\cup E_n)\setminus F_n$. 

Observe that, since $E_n\cap F_n=\varnothing$ for all $n$, we have $Q_n=\cup_{k=1}^n\bigl(E_k\setminus(\cup_{i=k+1}^n F_i)\bigr)$, $n\in\mathbb N$. Put $Q=\cup_{k=1}^\infty\bigl(E_k\setminus(\cup_{i=k+1}^\infty F_i)\bigr)$. We will prove that $P_Q$ is a projection onto a half-space that is not $T$-almost-invariant.

Indeed, let us show that, for each $n\in\mathbb N$, the set $\{P_{Q^c}TP_Qf_{ni}\}_{i=1}^n$ is linearly independent.  Observe that one can write $Q=(Q_n\cup A_n)\setminus B_n$ where 
$A_n\subseteq\cup_{k=n+1}^\infty E_k$ 
and $B_n\subseteq\cup_{k=n+1}^\infty F_k$. In particular, $\mu(A_n)$, $\mu(B_n)<\sum_{k=n+1}^\infty\frac{\varepsilon_{k-1}}{2}$. The choice of $\varepsilon_{n}$ guarantees  $\varepsilon_{m}\le\frac{\varepsilon_{m-k}}{2^k}$ for all $1\le k\le m-1$ and $m\in\mathbb N$. Therefore $\mu(A_n)$, $\mu(B_n)<\varepsilon_n$. It follows that the set $\{P_{Q^c_n\setminus A_n}TP_{Q_n\setminus B_n}f_{ni}\}_{i=1}^n$ is linearly independent. Since $\mathrm{supp}\,f_{ni}\subseteq E_n\subseteq Q_n$ for all~$i$, we have $P_Qf_{ni}=P_{(Q_n\cup A_n)\setminus B_n}f_{ni}=P_{Q_n\setminus B_n}f_{ni}$. Hence, the inclusion  $Q_n^c\setminus A_n\subseteq Q^c$ implies that the set $\{P_{Q^c}TP_{Q}f_{ni}\}_{i=1}^n$ is linearly independent, too. It follows that the operator $P_{Q^c}TP_Q$ is of infinite rank. In particular, $P_Q$ is a half-space, and $P_Q$ is not $T$-almost-invariant.

\emph{Case 2}. $\mathcal H= L^2(X, \mu)$ where $\mu$ is an atomless $\sigma$-finite measure.

Assume that the set $\{\mathrm{rank}\, P_{E^c}TP_E\mid E\mbox{ measurable}\}$ is unbounded. First, let us show that for each $N\in\mathbb N$ and each set $A$ of finite measure, there exist disjoint subsets $E$ and $F$ of $A^c$ such that $\mu(E)$, $\mu(F)<\infty$ and $\mathrm{rank}(P_FTP_E)\ge N$.

Indeed, represent $T$ as $\left[\begin{smallmatrix}T_1 & T_2 \\ T_3 & T_4\end{smallmatrix}\right]$ relative to the decomposition $\mathcal H=P_A\oplus P_{A^c}$. By the assumptions of the theorem, $T_2$ and $T_3$ are of finite rank. Also, by \emph{Case 1}, there is $\kappa_A$ such that $\mathrm{rank}(P_{E^c}TP_E)\le\kappa_A$ for all measurable $E\subseteq A$. Pick a measurable set $G$ such that $\mathrm{rank}(P_{G^c}TP_G)\ge N+\mathrm{rank}T_2+\mathrm{rank}T_3+\kappa_A$. It follows from $P_{G^c}TP_G=P_{G^c\setminus A^c}TP_{G\setminus A}+P_{G^c\setminus A^c}TP_{G\setminus A^c}+P_{G^c\setminus A}TP_{G\setminus A}+P_{G^c\setminus A}TP_{G\setminus A^c}$ that $P_{G^c\setminus A}TP_{G\setminus A}$ has rank at least~$N$. Since both $G^c\setminus A$ and $G\setminus A$ can be approximated by sets of finite measure, the existence of $E$ and $F$ with required properties follows.

We will repeatedly use the above observation to construct a set $E_0$ such that\linebreak $\mathrm{rank}(P_{E_0^c}TP_{E_0})~=~\infty$. Pick disjoint sets $E_1$ and $F_1$ of finite measure such that $P_{E_1^c}TP_{E_1}\ne 0$. Next, pick disjoint sets $E_2$, $F_2\subseteq (E_1\cup F_1)^c$ of finite measure such that $\mathrm{rank}(P_{F_2}TP_{E_2})\ge 2$. Continuing inductively, we construct two sequences $(E_n)$ and $(F_n)$ of sets of finite measure such that every member of either sequence is disjoint from all other members of both sequences and $\mathrm{rank}(P_{F_n}TP_{E_n})\ge n$ for all $n\in\mathbb N$. Define $E_0=\cup_{n\in\mathbb N}E_n$. Then $\mathrm{rank}(P_{E_0^c}TP_{E_0})\ge\mathrm{rank}(P_{F_n}TP_{E_n})\ge n$ for all $n\in\mathbb N$, so that $P_{E_0}$ is a projection onto a halfspace that is not $T$-almost-invariant.

\emph{Case 3}. $\mathcal H= L^2(X, \mu)$ where $\mu$ is a purely atomic measure.

This case is proved by the same argument as \emph{Case 2}. We simply approximate the measurable sets by finite sets instead of sets of finite measure, then repeat the argument almost verbatim.
\end{proof}


\bigskip
Before proceeding to the proof of the main result of this section, we pause for two more technical lemmas.  The thrust of the second of these  lemmas is to show that if, in the setting of a continuous measure space, an ``off-diagonal  corner" of an operator $T$ has rank $\kappa$, then we can compress that corner into as small a ``subcorner" as we wish (in the sense that both the range and initial space correspond to sets of small measure) and still keep the rank of the compression as great as the rank of the original  corner.

\bigskip

\begin{lem} \label{lem4.41}
Let $\kappa \ge 1$ be an integer.  Suppose that $Y$ is a subset of a continuous, $\sigma$-finite, Borel measure space $(X, \mu)$, and that $\{ f_1, f_2, ..., f_\kappa\}$ is a linearly independent family of measurable functions from $Y$ to $\bbC$.   Then for all $\varepsilon > 0$ there exists $W \subseteq Y$ with $0 < \mu(W) < \varepsilon$ such that  $\{{f_1}|_W, {f_2}|_W, ..., {f_\kappa}|_W\}$ is also a linearly independent set.
\end{lem}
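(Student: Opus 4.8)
The plan is to induct on $\kappa$, at each stage shrinking the set on which the linear independence is preserved. For $\kappa = 1$ the statement is immediate: $f_1$ is not the zero function in $L^0(Y)$, so the set $\{y \in Y : f_1(y) \neq 0\}$ has positive measure; by $\sigma$-finiteness and continuity of $\mu$ we may intersect it with a set of arbitrarily small positive measure to obtain a suitable $W$. For the inductive step, suppose the result holds for $\kappa - 1$. Given linearly independent $\{f_1, \dots, f_\kappa\}$ on $Y$ and $\varepsilon > 0$, first apply the inductive hypothesis to $\{f_1, \dots, f_{\kappa-1}\}$ with tolerance $\varepsilon/2$ to obtain $W_0 \subseteq Y$, $0 < \mu(W_0) < \varepsilon/2$, on which $f_1, \dots, f_{\kappa-1}$ remain linearly independent. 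Now I would argue that $f_\kappa$ cannot lie in $\mathrm{span}\{f_1|_W, \dots, f_{\kappa-1}|_W\}$ for \emph{every} sufficiently small $W$, for otherwise a continuity/covering argument would force $f_\kappa$ itself to be a global linear combination of $f_1, \dots, f_{\kappa-1}$ on $Y$, contradicting linear independence.

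More precisely, here is the mechanism I expect to use for the step. Consider the Gram-type determinant: enlarge the family to a basis-detection device by picking, inside $W_0$, a small set on which the restricted $(\kappa-1)\times(\kappa-1)$ situation is nondegenerate and $f_\kappa$ fails to be in the span. Concretely, since $\{f_1, \dots, f_\kappa\}$ is independent on $W_0$ (independence on the larger set $Y$ does not immediately give it on $W_0$ — this is the subtlety — but independence of $f_1, \dots, f_{\kappa-1}$ on $W_0$ plus the fact that no single relation $f_\kappa = \sum c_i f_i$ can hold a.e. on $W_0$ unless it holds on all of $Y$ does give it), decompose $W_0$ into countably many disjoint pieces of measure $< \varepsilon/2$ using $\sigma$-finiteness and continuity (splitting atoms is automatic since $\mu$ is continuous). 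If on every such piece the family $\{f_1, \dots, f_\kappa\}$ were dependent, then since there are only finitely many "dependence patterns" available, an averaging/pigeonhole over a countable exhaustion would produce a fixed nontrivial linear relation valid on a set of positive measure, and iterating across all pieces would patch up to a relation on $W_0$, hence (by the reduction above) on $Y$. So some piece $W$ works.

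Let me reorganize the key steps in the order I would carry them out: (i) base case $\kappa = 1$ via positivity of the support of a nonzero function and continuity of $\mu$; (ii) observe the reduction that for any $W \subseteq Y$ of positive measure, $f_\kappa|_W \in \mathrm{span}\{f_1|_W, \dots, f_{\kappa-1}|_W\}$ together with linear independence of $f_1|_W, \dots, f_{\kappa-1}|_W$ would, after extending scalars, still leave open whether the relation propagates — so instead I work directly: (ii$'$) apply the inductive hypothesis to get $W_0$ with $\mu(W_0) < \varepsilon$ on which $f_1, \dots, f_{\kappa-1}$ are independent; (iii) partition $W_0$ into countably many positive-measure pieces each of measure $< \varepsilon$ (continuity of $\mu$); (iv) show at least one piece $W$ has $\{f_1|_W, \dots, f_\kappa|_W\}$ independent, by contradiction: if every piece gave a nontrivial relation $\sum_{i=1}^{\kappa} c_i^{(n)} f_i|_{W_n} = 0$ with $c_\kappa^{(n)} \neq 0$ (we can normalize $c_\kappa^{(n)} = 1$), then on each $W_n$ we have $f_\kappa|_{W_n} = -\sum_{i<\kappa} c_i^{(n)} f_i|_{W_n}$; since $f_1|_{W_0}, \dots, f_{\kappa-1}|_{W_0}$ are independent, the coefficient vectors $(c_1^{(n)}, \dots, c_{\kappa-1}^{(n)})$ must all coincide (two distinct representations of $f_\kappa$ on overlapping-in-the-sense-of-the-same-functions data force equality piece by piece against the independent $f_i$), hence they glue to a single relation $f_\kappa = -\sum_{i<\kappa} c_i f_i$ on all of $W_0$, contradicting independence of $\{f_1, \dots, f_\kappa\}$ on $Y$ (which restricts nontrivially: if $\sum d_i f_i = 0$ a.e. on $W_0$ with $\mu(W_0) > 0$, this need not give $\sum d_i f_i = 0$ on $Y$, so here is where I must be careful).

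The main obstacle, as the parenthetical above flags, is exactly that \emph{linear independence does not restrict}: $\{f_1, \dots, f_\kappa\}$ independent on $Y$ does not imply independent on a subset $W_0$. So the honest argument cannot "reduce to $Y$" so glibly. The correct fix, which I would actually implement, is to track the dimension $d(W) := \dim \mathrm{span}\{f_1|_W, \dots, f_\kappa|_W\}$ as a function of measurable $W \subseteq Y$ with $\mu(W) > 0$: it is a positive integer $\le \kappa$, it is monotone (if $W' \subseteq W$ then $d(W') \le d(W)$), and — crucially — it is \emph{superadditive} in the sense that for a countable partition $W = \bigsqcup_n W_n$ one has $d(W) \le \sum_n d(W_n)$ is false in general, but $d(W) = \max$ is also false; what \emph{is} true is that there exists a partition into finitely many pieces on which $d$ drops, and by a maximality/compactness argument (choosing $W$ minimizing $d$ among positive-measure subsets of a fixed finite-measure set, which exists since $d$ takes values in $\{1, \dots, \kappa\}$) one lands on a $W$ that is "$d$-homogeneous": every positive-measure subset of $W$ has the same $d$-value. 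On such a homogeneous $W$ of minimal $d$-value, the restricted family spans a space of dimension $d$, and one shows $d$ must equal $\kappa$ by the base-case logic applied to a $\kappa \times \kappa$ Gram/Wronskian-type nonvanishing — because if $d < \kappa$ on a homogeneous $W$, the kernel relation is forced to be constant across all subpieces (now this \emph{does} work, since homogeneity rules out the coefficient vector jumping), giving a genuine relation on $W$ and hence $d < \kappa$... which is consistent, so instead I must use that on $Y$ itself $d(Y) = \kappa$ and run a descent: $d(Y) = \kappa$, pick the homogeneous minimal-$d$ subset $W \subseteq Y$; if $d(W) < \kappa$, homogeneity forces the same proper relation on every subpiece of $W$, but $Y \setminus W$ then also... this requires $d(Y \setminus W) = \kappa$ and an exhaustion. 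I will present this cleanly by the homogeneous-subset device plus shrinking to small measure at the end via continuity of $\mu$; that bookkeeping is the real content and the step I expect to spend the most care on.
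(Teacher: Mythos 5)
You correctly identify the central subtlety --- that linear independence of $\{f_1,\dots,f_\kappa\}$ on $Y$ does \emph{not} restrict to subsets --- but you never close the gap. Your pigeonhole argument over a disjoint partition $W_0=\bigsqcup_n W_n$ cannot work as written: the coefficient vectors $(c_1^{(n)},\dots,c_{\kappa-1}^{(n)})$ on disjoint pieces $W_n$ live on disjoint sets, and there is no overlap on which the (restrictions of the) $f_i$ are known to be independent, so nothing forces the vectors to agree. Moreover, you cannot even normalize $c_\kappa^{(n)}=1$ in general, since the dependence on $W_n$ might involve only $f_1,\dots,f_{\kappa-1}$. And your fallback --- the ``homogeneous-dimension'' descent $d(W)$ --- you yourself leave unfinished (``this requires $d(Y\setminus W)=\kappa$ and an exhaustion\dots''); it is not a completed proof.

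The paper's proof supplies precisely the missing mechanism, and it is cleaner than what you are building. Argue by contradiction: suppose there is $\varepsilon>0$ so that \emph{every} $W\subseteq Y$ with $\mu(W)<\varepsilon$ makes $\{f_1|_W,\dots,f_\kappa|_W\}$ dependent. By induction pick $W_0$ with $\mu(W_0)<\varepsilon/2$ on which $f_1,\dots,f_{\kappa-1}$ are independent; the forced dependence on $W_0$ then must involve $f_\kappa$, and independence of the first $\kappa-1$ functions makes the coefficients $\lambda_1,\dots,\lambda_{\kappa-1}$ in $f_\kappa|_{W_0}=\sum\lambda_i f_i|_{W_0}$ \emph{unique}. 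Now the crucial step: for \emph{any} $W_1$ with $\mu(W_1)\le\varepsilon/2$, the set $W_0\cup W_1$ still has measure $<\varepsilon$, hence the family is dependent there too; restricting that dependence to $W_0\subseteq W_0\cup W_1$ and invoking uniqueness shows the dependence on $W_0\cup W_1$ is exactly $\sum\lambda_i f_i - f_\kappa = 0$. Since $\mu$ is $\sigma$-finite, cover $Y$ by countably many such $W_1$ and conclude $\sum\lambda_i f_i - f_\kappa=0$ on all of $Y$, contradicting independence on $Y$. The \emph{overlap} with $W_0$ inside each union $W_0\cup W_1$ is what allows comparison of coefficients; a disjoint partition cannot provide this, which is exactly where your argument stalls.
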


\begin{pf}
The proof will be by induction on $\kappa$.  The case $\kappa = 1$ is clear, since any non-zero function $f_1$ must still be non-zero on a set of arbitrarily small measure.  (The measure is continuous!)

Assume that the result holds for $\kappa - 1$.   We shall prove that it holds for $\kappa$.   Indeed, suppose otherwise.  Then there exists $\varepsilon > 0$ such that for all $W \subseteq Y$ with $\mu(W) < \varepsilon$, the set 
\[
\{ {f_1}|_W, {f_2}|_W, ..., {f_\kappa}|_W \} \]
is linearly dependent.   Use the induction hypothesis to choose $W_0 \subseteq Y$ with $\mu (W_0) < \frac{\varepsilon}{2}$ such that $\{ {f_1}|_{W_0}, {f_2}|_{W_0}, ..., {f_{\kappa-1}}|_{W_0} \}$ is linearly independent.  Since $\{ {f_1}|_{W_0}, {f_2}|_{W_0}, ..., {f_\kappa}|_{W_0} \}$ is linearly dependent, there exist unique  $\lambda_1, \lambda_2, ..., \lambda_{\kappa-1} \in \bbC$ so that 
\[
{f_\kappa}|_{W_0} = \lambda_1 {f_1}|_{W_0} + \lambda_2 {f_2}|_{W_0} + \cdots + \lambda_{\kappa - 1} {f_{\kappa-1}}|_{W_0}. \]
If $W_1 \subseteq Y$ and $\mu(W_1) \le \frac{\varepsilon}{2}$, then 
\[
\{ {f_1}|_{W_0 \cup W_1}, {f_2}|_{W_0 \cup W_1}, ..., {f_\kappa}|_{W_0 \cup W_1} \} \]
is linearly dependent.   Thus some linear combination of these functions must be zero.   By restricting to the subset $W_0$ of $W_0 \cup W_1$, we see that the only possible linear combination which does this is:
\[
\lambda_1 {f_1}|_{W_0 \cup W_1} + \lambda_2 {f_2}|_{W_0 \cup W_1} + \cdots + \lambda_{\kappa - 1} {f_{\kappa-1}}|_{W_0 \cup W_1} - {f_{\kappa}}|_{W_0 \cup W_1} = 0. \]

Now, since $\mu$ is $\sigma$-finite, we can write $Y = \cup_\alpha W_\alpha$ with $\mu(W_\alpha) < \frac{\varepsilon}{2}$ for all $\alpha$.   But then 
\[
\lambda_1 {f_1}|_{Y} + \lambda_2 {f_2}|_{Y} + \cdots + \lambda_{\kappa - 1} {f_{\kappa-1}}|_{Y} - {f_\kappa}|_{Y} = 0, \]
a contradiction.
\end{pf}


\begin{lem} \label{lem4.42}
Let   $\mu$ be a continuous, $\sigma$-finite Borel measure on a measure space $X$.  Suppose that $\cD = \{ M_f: f \in L^\infty(X, \mu)\}$ is a masa acting on $\hilb = L^2(X, \mu)$, $Z_0 \subseteq X$ is a measurable set, and $P_0 :=P_{Z_0} \in \cD$   is a projection for which 
\[
\mathrm{rank}\, (P_0^\perp T P_0) = \kappa < \infty. \]
Then for each $m \ge 1$, there exist $Y_m, Z_m \subseteq X$ measurable subsets satisfying
\begin{enumerate}
	\item[(i)] 
	$\max\{ \mu(Y_m), \mu(Z_m)\} < \frac{1}{m}$;
	\item[(ii)]   
	$Y_{m+1} \subseteq Y_m \subseteq Z_0^c$, $Z_{m+1} \subseteq Z_m \subseteq Z_0$ for all $m \ge 1$;
\end{enumerate}
for which 
\[
\mathrm{rank}\, ( P_{Y_m} T P_{Z_m}) = \kappa. \]
\end{lem}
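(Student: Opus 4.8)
The strategy is induction on $m$, producing the nested sequences $(Y_m)$ and $(Z_m)$ by repeatedly shrinking the domain and range of the off-diagonal corner while preserving its rank. The base case and the inductive step are essentially the same argument, so I would phrase it as: given a pair $Y \subseteq Z_0^c$, $Z \subseteq Z_0$ with $\operatorname{rank}(P_Y T P_Z) = \kappa$ and any $\varepsilon > 0$, produce $Y' \subseteq Y$, $Z' \subseteq Z$ with $\mu(Y'), \mu(Z') < \varepsilon$ and $\operatorname{rank}(P_{Y'} T P_{Z'}) = \kappa$; then apply this with $\varepsilon = 1/m$ starting from $Y = Z_0^c$, $Z = Z_0$ (where the corner has rank $\kappa$ by hypothesis). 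Shrinking $Z$ is the first move: pick $g_1, \dots, g_\kappa$ supported in $Z$ with $\{P_Y T g_j\}$ linearly independent, then note that the restrictions of the $g_j$ to smaller and smaller subsets of $Z$ stay linearly independent as functions — this is exactly Lemma~\ref{lem4.41} — so choosing $Z' \subseteq Z$ with $\mu(Z') < \varepsilon$ on which $\{g_j|_{Z'}\}$ are independent and re-examining, combined with Lemma~\ref{sets-manipulation}, keeps $\{P_Y T P_{Z'} g_j\}$ independent; hence $\operatorname{rank}(P_Y T P_{Z'}) \ge \kappa$, and the reverse inequality is automatic since it is a compression of $P_Y T P_{Z_0}$ which has rank $\kappa$.

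For the range side, once $Z'$ is fixed, the corner $P_Y T P_{Z'}$ has rank exactly $\kappa$; let $h_1, \dots, h_\kappa$ be functions in its range, i.e. $h_j = P_Y T P_{Z'} u_j$ for suitable $u_j$ supported in $Z'$, and these are linearly independent functions supported in $Y$. Apply Lemma~\ref{lem4.41} again to find $Y' \subseteq Y$ with $\mu(Y') < \varepsilon$ on which $\{h_j|_{Y'}\}$ remain linearly independent; then $P_{Y'} T P_{Z'} u_j = P_{Y'} h_j = h_j|_{Y'}$ (identifying $h_j|_{Y'}$ with $\chi_{Y'} h_j$), so $\operatorname{rank}(P_{Y'} T P_{Z'}) \ge \kappa$, and again equality holds since this is a further compression. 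This gives the pair $(Y', Z')$ with all required properties; taking $Y_m = Y'$, $Z_m = Z'$ at stage $m$ and feeding them back in as the starting pair for stage $m+1$ (with $\varepsilon = 1/(m+1)$) produces the nested sequences, since each new pair is a subset of the previous one. The measure bound $\max\{\mu(Y_m), \mu(Z_m)\} < 1/m$ and the nesting (ii) are then immediate from the construction.

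The main obstacle, and the point that needs care, is the interplay between "linear independence of the functions $g_j$ (or $h_j$) after restriction" and "linear independence of the vectors $P_{E^c} T P_E g_j$ after restriction." Lemma~\ref{lem4.41} handles the former for an \emph{arbitrary} linearly independent family of measurable functions, while Lemma~\ref{sets-manipulation} handles the latter but only for small perturbations $\mu(A), \mu(B) < \varepsilon$ of a \emph{fixed} set $E$ — it does not directly let me pass to a subset of tiny measure in one step. The resolution is to do the two shrinkings in the right order and to use the right lemma for each: use Lemma~\ref{lem4.41} on the range functions $h_j$ (genuine functions, nothing about $T$ is needed there, so arbitrarily small measure is fine), and be slightly more delicate on the domain side. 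Actually the cleanest route on the domain side is to observe that we are free to \emph{choose} the spanning functions $g_j$ of the relevant subspace: since $\operatorname{rank}(P_Y T P_Z) = \kappa$, pick any $\kappa$ functions $v_1,\dots,v_\kappa$ supported in $Z$ with $\{P_Y T v_j\}$ independent, apply Lemma~\ref{lem4.41} to the family $\{v_j\}$ (as functions on $Z$) to get $Z' \subseteq Z$, $\mu(Z') < \varepsilon$, with $\{v_j|_{Z'}\}$ independent — but this alone does not guarantee $\{P_Y T (v_j|_{Z'})\}$ is independent. So one does need Lemma~\ref{sets-manipulation}, applied with $E = Z$ and $B = Z \setminus Z'$; this forces a two-stage argument where first Lemma~\ref{sets-manipulation} supplies the threshold $\varepsilon_0$ below which removing a set of measure $<\varepsilon_0$ from $Z$ keeps the $P_Y T$-images independent, and then Lemma~\ref{lem4.41} (applied to the $v_j$) is only invoked to know such small $Z'$ exist with the functions themselves still independent — though in fact for preserving $\operatorname{rank} \ge \kappa$ we only need Lemma~\ref{sets-manipulation}, since after removing $Z \setminus Z'$ the images $P_Y T P_{Z'} v_j = P_Y T P_{Z \setminus B} v_j$ are independent directly. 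Once this ordering is pinned down, the rest is bookkeeping about nestedness and the $1/m$ bounds.
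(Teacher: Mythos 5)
There is a genuine gap on the domain side, and the paper's proof uses a cleaner idea that avoids it entirely. The issue is exactly where you located it, but the resolution you propose does not close it: Lemma~\ref{sets-manipulation} produces a threshold $\varepsilon_0$ such that removing a set $B \subseteq Z$ with $\mu(B) < \varepsilon_0$ from $Z$ preserves linear independence of $\{P_{Y}TP_{Z\setminus B}v_j\}$, but to achieve $\mu(Z') < 1/m$ you must remove $B = Z \setminus Z'$ of measure $\mu(Z) - \mu(Z')$, which is in general far larger than $\varepsilon_0$. So the step ``the images $P_Y T P_{Z\setminus B}v_j$ are independent directly'' is not justified for the $B$ you actually need. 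Nor does Lemma~\ref{lem4.41} help here on its own: knowing $\{v_j|_{Z'}\}$ is linearly independent says nothing about $\{P_YT(v_j|_{Z'})\}$, since $T$ could collapse those restrictions. Your range-side step, by contrast, is sound precisely because there one is restricting the \emph{image} functions $h_j$ and no further application of $T$ is involved.

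The paper sidesteps the whole domain/range asymmetry by treating both sides the same way: since $P_0^\perp T P_0$ has rank $\kappa$, write it as a finite-rank operator $\sum_{j=1}^\kappa f_j \otimes g_j^*$ with $\{f_j\}\subseteq L^2(Z_0^c,\mu)$ and $\{g_j\}\subseteq L^2(Z_0,\mu)$ each linearly independent. For any $Y_1 \subseteq Z_0^c$, $Z_1 \subseteq Z_0$ one then has
\[
P_{Y_1} T P_{Z_1} \;=\; P_{Y_1}\bigl(P_0^\perp TP_0\bigr)P_{Z_1} \;=\; \sum_{j=1}^\kappa \,(f_j|_{Y_1}) \otimes (g_j|_{Z_1})^*,
\]
whose rank equals $\kappa$ if and only if both $\{f_j|_{Y_1}\}$ and $\{g_j|_{Z_1}\}$ are linearly independent. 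Lemma~\ref{lem4.41} is then invoked \emph{twice}, once for $\{f_j\}$ and once for $\{g_j\}$, to get $Y_1,Z_1$ of measure $<1$ with both families independent after restriction. The induction then proceeds by applying the same argument to the resulting corners. The point you missed is that the ``domain'' functions that need shrinking are not test vectors in $L^2(Z_0)$ but the factors $g_j$ in the tensor decomposition of the rank-$\kappa$ operator; with that choice both sides are handled symmetrically by the function-restriction Lemma~\ref{lem4.41}, and Lemma~\ref{sets-manipulation} is not needed in this proof at all. (Equivalently, your range-side argument applied to $T^*$ handles the domain side, since $(P_YTP_Z)^* = P_ZT^*P_Y$; this is what the $g_j$ decomposition encodes.)
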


\begin{pf}
Since $\mathrm{rank} (P_0^\perp T P_0) = \kappa$, there exist linearly independent functions $f_1, f_2,..., f_\kappa: Z_0^c \to \bbC$ and linearly independent functions $g_1, g_2,..., g_\kappa: Z_0 \to \bbC$ such that 
\[
P_0^\perp T P_0 = \sum_{j=1}^\kappa f_j \otimes g_j^*. \]
By Lemma~\ref{lem4.41}, we can find $Y_1 \subseteq Z_0^c$ and $Z_1 \subseteq Z_0$ measurable so that\linebreak $\max \{ \mu(Y_1), \mu(Z_1) \} < 1$ and both $\{ {f_1}|_{Y_1}, {f_2}|_{Y_1}, ..., {f_\kappa}|_{Y_1} \}$ and $\{ {g_1}|_{Z_1}, {g_2}|_{Z_1}, ..., {g_\kappa}|_{Z_1} \}$ are linearly independent.   Then 
\[
P_{z_1} T P_{Y_1} = \sum_{j=1}^\kappa {f_j}|_{Y_1} \otimes {{g_j}|_{Z_1}}^* \]
has rank $\kappa$.  The result then follows from a routine induction argument.
\end{pf}


The following is the main result of this section.


\begin{thm} \label{thm4.5} 
Let $\hilb$ be a separable Hilbert space and let $\cD$ be a masa in $\bofh$.  Suppose that $T \in \bofh$ has the property that the range of every projection in $\cD$ is almost-invariant for $T$.   Then there exists $F \in \cF(\hilb)$ and $D \in \cD$ so that $T = D + F$.   Furthermore, if $\kappa=\sup \{ \mathrm{rank} \, (I-P)T P : P \in \cP(\cD)\}$ then $\kappa<\infty$ and
\begin{enumerate}
	\item[(a)]
	If $\cD$ is an atomic masa, then $F$ may be chosen so that $\mathrm{rank} \, F \le 3 \kappa$.
	\item[(b)]
	If $\cD$ is a continuous masa,  then $F$ is uniquely determined and
	$\mathrm{rank} \, F \le \kappa$.
	\item[(c)]
	For $\cD$ a general masa,  $F$ may be chosen so that
	$\mathrm{rank} \, F \le 6\kappa$.
\end{enumerate}	
\end{thm}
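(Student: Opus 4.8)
The plan is to reduce first to two ``pure'' cases and handle them separately. By Theorem~\ref{masa-bounded-defect} there is an integer $\kappa<\infty$ with $\mathrm{rank}\,\bigl((I-P)TP\bigr)\le\kappa$ for all $P\in\cP(\cD)$, so the supremum $\kappa$ in the statement is finite. Realize $\hilb=L^2(X,\mu)$ and $\cD=\{M_f:f\in L^\infty(X,\mu)\}$, and decompose $X=X_d\sqcup X_c$ into its purely atomic and its atomless parts, so $\hilb=\hilb_d\oplus\hilb_c$ with $\hilb_d=L^2(X_d)$, $\hilb_c=L^2(X_c)$. Since $P_{X_d},P_{X_c}\in\cP(\cD)$, the corners $P_{X_c}TP_{X_d}$ and $P_{X_d}TP_{X_c}$ have rank $\le\kappa$; hence $T=(T_d\oplus T_c)+F_0$ with $T_d=P_{X_d}TP_{X_d}|_{\hilb_d}$, $T_c=P_{X_c}TP_{X_c}|_{\hilb_c}$, and $\mathrm{rank}\,F_0\le 2\kappa$. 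Compressing the corner estimates shows that $T_d$ satisfies the hypothesis of the theorem with respect to the atomic masa $\cD_d:=P_{X_d}\cD$ on $\hilb_d$, with constant at most $\kappa$, and similarly $T_c$ with respect to the continuous masa $\cD_c$ on $\hilb_c$. So it suffices to produce $T_d=D_d+F_d$ with $D_d\in\cD_d$, $\mathrm{rank}\,F_d\le 3\kappa$, and $T_c=D_c+F_c$ with $D_c\in\cD_c$, $\mathrm{rank}\,F_c\le\kappa$: then $D:=D_d\oplus D_c\in\cD$ and $F:=(F_d\oplus F_c)+F_0$ has rank $\le 3\kappa+\kappa+2\kappa=6\kappa$, which is (c), while (a) and (b) are the degenerate cases $X_c=\varnothing$ and $X_d=\varnothing$.

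For the atomic case, identify $\hilb_d$ with $\ell_2(I)$ and $\cD_d$ with the diagonal operators, and write $T=(t_{ij})$. The hypothesis is exactly that $\mathrm{rank}\,(P_{I\setminus A}TP_A)\le\kappa$ for every $A\subseteq I$; in particular every square submatrix of $T$ of size $\kappa+1$ whose set of row indices is disjoint from its set of column indices is singular. Choosing (if one exists) a submatrix $T[B_0,A_0]$ of maximal size $r\le\kappa$ with $B_0\cap A_0=\varnothing$ and $\det T[B_0,A_0]\ne 0$, the vanishing of the relevant $(r{+}1)$-minors lets one solve for the remaining off-diagonal entries and obtain a bounded operator $R$ of rank $\le\kappa$ (factoring through the $r$-dimensional spaces $L^2(A_0)$ and $L^2(B_0)$) such that $t_{ij}=R_{ij}$ whenever $i\ne j$ and $i,j\notin E:=A_0\cup B_0$. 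Subtracting the diagonal operator $\mathrm{diag}(T)-\mathrm{diag}(R)\in\cD_d$ leaves $R$ plus an off-diagonal operator supported on the rows and columns indexed by the finite set $E$; applying the corner estimates to the complementary pairs coming from $E$ bounds the rank of this remainder, and careful accounting gives $T_d=D_d+F_d$ with $\mathrm{rank}\,F_d\le 3\kappa$ (trivially so when $|I|\le 3\kappa$).

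The continuous case is the heart of the matter. Replacing $\mu$ by an equivalent probability measure (which only conjugates $T$ and $\cD$ by a unitary), assume $\mu$ atomless with $\mu(X_c)=1$; let $\kappa$ now denote the supremum of $\mathrm{rank}\,(P_{E^c}TP_E)$ over measurable $E$, which is attained (being integer-valued and bounded), say at $E_0$. Apply Lemma~\ref{lem4.42} to $P_0=P_{E_0}$ to obtain decreasing measurable sets $Y_m\subseteq E_0^c$ and $Z_m\subseteq E_0$ with $\mu(Y_m),\mu(Z_m)\to 0$ and $\mathrm{rank}\,(P_{Y_m}TP_{Z_m})=\kappa$ for every $m$; thus the full off-diagonal rank of $T$ is carried on sets of arbitrarily small measure. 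Put $M_g:=\mathcal{E}(T)$, the image of $T$ under the normal conditional expectation of $\bofh$ onto $\cD$ (so $g\in L^\infty$ and $M_g$ commutes with every $P_E$), and set $F:=T-M_g$; the goal is to show $\mathrm{rank}\,F\le\kappa$. One argues by contradiction: if $F$ had rank at least $\kappa+1$, one seeks a measurable set $E$ and a $(\kappa{+}1)$-dimensional subspace $V$ supported in $E$ with $F(V)$ supported in $E^c$; Lemma~\ref{lem4.42}, together with atomlessness of $\mu$ (which lets the existing rank-$\kappa$ corner be pushed onto a set of measure so small that its complement comfortably accommodates an additional independent direction), is what makes such a separation achievable, whence $\mathrm{rank}\,(P_{E^c}TP_E)=\mathrm{rank}\,(P_{E^c}FP_E)\ge\kappa+1$, contradicting the choice of $\kappa$. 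Taking $D_c=M_g$ gives the decomposition with $\mathrm{rank}\,F_c\le\kappa$. For uniqueness when $\cD$ is continuous: if $T=D_1+F_1=D_2+F_2$ with $D_i\in\cD$ and $F_i$ of finite rank, then $D_1-D_2=F_2-F_1$ is a finite-rank multiplication operator, hence $0$ because $\mu$ is atomless, so $F_1=F_2$.

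The step I expect to be the main obstacle is the construction of the separating set $E$ (equivalently, of the finite-rank operator $F$) in the continuous case. Rank is not a local invariant, so one cannot simply add the ranks of corners living on disjoint sets; the role of Lemma~\ref{lem4.42} (and of Lemma~\ref{lem4.41} underlying it) is precisely to concentrate the maximal corner onto a set of negligible measure, after which atomlessness supplies the room needed to route a prescribed finite-dimensional subspace and its $F$-image into complementary sets. The numerical bounds ($\le\kappa$ in the continuous case, $\le 3\kappa$ in the atomic case, and $\le 6\kappa$ in general) then require only careful bookkeeping once this separation is in hand.
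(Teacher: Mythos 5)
Your overall reduction to the atomic and atomless parts is exactly the paper's part (c), and in the atomic case your plan (find a maximal-rank off-diagonal subcorner, then use vanishing of larger minors --- i.e.\ Lemma~\ref{lem4.43aBanach} in disguise --- to determine the remaining off-diagonal entries up to a diagonal correction) is the same idea the paper carries out, differently packaged.

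The continuous case, however, has a genuine gap at its very first step. You set $M_g := \mathcal{E}(T)$ where $\mathcal{E}$ is ``the normal conditional expectation of $\bofh$ onto $\cD$.'' For a continuous masa no such map exists: it is a classical fact (going back to Kadison and Singer) that a masa $\cD\subseteq\bofh$ admits a normal conditional expectation from $\bofh$ precisely when $\cD$ is atomic. Concretely, if $\cD = L^\infty([0,1])$ and $\psi\in L^2([0,1])$ is a unit vector with $|\psi|^2\notin L^\infty$, there is no bounded ``diagonal'' $\mathcal{E}(\psi\otimes\psi^*)\in L^\infty$ compatible with $\mathcal{E}(P_A(\psi\otimes\psi^*)P_A)=P_A\mathcal{E}(\psi\otimes\psi^*)P_A$ for all measurable $A$, so the map your argument relies on is not available. (Non-normal conditional expectations onto $\cD$ do exist via invariant means, but they are non-canonical, not weak-$*$ continuous, and nothing in your sketch uses or could use such a surrogate.) Because $F:=T-M_g$ is never constructed, the remainder of the continuous case --- showing $F$ is finite rank and then bounding its rank by $\kappa$ --- does not get off the ground.

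Even setting aside the existence of $\mathcal{E}$, the contradiction step is not carried out. You assert that if $\mathrm{rank}\,F\ge\kappa+1$ one can find a measurable $E$ and a $(\kappa+1)$-dimensional $V\subseteq L^2(E)$ with $F(V)\subseteq L^2(E^c)$ and $F|_V$ injective; that is precisely what must be proved, and it is not a consequence of Lemma~\ref{lem4.42} (which compresses an already-known rank-$\kappa$ corner onto small sets; it does not manufacture corners of higher rank out of a global rank bound on $F$). The paper avoids both problems by building $F=\sum_{j=1}^\kappa \overline u_j\otimes\overline v_j^*$ directly from the limits of the families $\{\overline u_j^{(n)}\}$ obtained via Lemmas~\ref{lem4.41}--\ref{lem4.42}, and then verifies $T-F\in\cD$ by checking that $T-F$ commutes with every projection of $\cD$, using the uniqueness of the ``$X$'' corner from Lemma~\ref{lem4.43aBanach}. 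Your uniqueness argument at the very end (a finite-rank multiplication operator over an atomless measure is $0$) is correct and matches the paper, but it is downstream of the missing construction.
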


\begin{pf}
First observe that by Theorem~\ref{masa-bounded-defect},  $\kappa=\sup \{ \mathrm{rank} \, (I-P)T P : P \in \cP(\cD)\}<\infty$ and
\[
\kappa  = \max  \{ \mathrm{rank} \, (I-P)T P : P \in \cP(\cD)\}. \]
\smallskip

\noindent(a)\ \ \ 
	We begin with the case where $\cD$ is a discrete masa.   Let $\cE := \{ \xi_\alpha: \alpha \in A\}$ denote the orthonormal basis for $\hilb$ corresponding to characteristic functions onto the atoms $A$ of the masa.   Choose $P_0 \in \cD$ so that $\kappa = \mathrm{rank}\, (I-P_0) T P_0$.   It follows that the matrix of $(I-P_0) T P_0$ relative to $\{ \xi_\alpha\}_\alpha$ admits $\kappa$ linearly independent columns.  We shall relabel the corresponding basis vectors as   $\{ e_1, e_2, ..., e_\kappa\}$.    In a similar way, we may find $\kappa$ linearly independent rows of $(I-P_0) T P_0$, and we may relabel the basis vectors corresponding to those rows as $\{ f_1, f_2, ..., f_\kappa\}$.   (Observe that since $P_0$ is obviously perpendicular to $(I-P_0)$, it follows that $\{ e_1, e_2, .., e_\kappa\} \cap \{ f_1, f_2, ..., f_\kappa\} = \varnothing$.)  
	
	Let us define $\hilb_e = \mathrm{span} \{ e_j\}_{j=1}^\kappa$, $\hilb_f = \mathrm{span} \{ f_j\}_{j=1}^ \kappa$, and $\hilb_1 = \hilb \ominus (\hilb_e \oplus \hilb_f)$.
	We may then write the matrix for $T$ relative to $\hilb = \hilb_e \oplus \hilb_1 \oplus \hilb_f$ as
	\[
	T = \begin{bmatrix} T_1 & T_2 & T_3 \\ C & B & T_6 \\ T_0 & A & T_9 \end{bmatrix}. \]
	Let $Q$ denote the orthogonal projection of $\hilb$ onto $\hilb_e \oplus \hilb_1$.  Then 
	\[
	\kappa \ge \mathrm{rank} \,(Q^\perp T Q) = \mathrm{rank} \begin{bmatrix} T_0 & A \end{bmatrix} \ge \mathrm{rank}\, T_0 = \kappa.\]
	It follows that $A = T_0 W$ for some $W \in \cB(\hilb_1, \hilb_f)$.  Similarly, $C = X T_0$ for some $X \in \cB(\hilb_e, \hilb_1)$.
	
	Let $g \in \cE \backslash \{ e_1, e_2, ..., e_\kappa, f_1, f_2, ..., f_\kappa\}$, and let $R$ denote the orthogonal projection of $\hilb$ onto $\mathrm{span}\, \{ g, f_1, ..., f_\kappa\}^\perp$.  Then $\mathrm{rank} (R^\perp T R) = \kappa$, since $T_0$ is a compression of $R^\perp T R$.
	Let $\hilb_2 =  \hilb_{1} \ominus \bbC g$.  Relative to the decomposition $\hilb = \hilb_e \oplus  \hilb_2 \oplus  \bbC g \oplus \hilb_f$, we may write
	\[
	T = \begin{bmatrix} T_1 & T_{2, 1} & T_{2, g} & T_3 \\ C_1 & B_{2, 2} & B_{2,g} & T_{6,1} \\ C_2 & B_{g, 2} & B_{g,g} &  T_{6, g} \\ T_0 & A_1 & A_g & T_9 \end{bmatrix}. \]
	Here, $B_{g, g} = \langle T g, g \rangle $.
	Thus $\mathrm{rank}\, \begin{bmatrix} C_2 & B_{g, 2} \\ T_0 & A_1 \end{bmatrix} = \kappa = \mathrm{rank} \begin{bmatrix} T_0 & A_1  & A_g \end{bmatrix}.$
	
	In particular, this means that the column  of $A_g$ must be a linear combination of the columns of $T_0$.   We can therefore choose a new entry $z_g \in \bbC$ so that the column $\begin{bmatrix} z_g \\ A_g \end{bmatrix}$ is precisely the same linear combination of the columns of $\begin{bmatrix} C_2 \\ T_0\end{bmatrix}$. In fact, by Lemma~\ref{lem4.43aBanach}, $z_g=C_2T_0^{-1}A_g$, so that $\left|z_g\right|\le\|C_2\|\cdot\|T_0^{-1}\|\cdot\|A_g\|\le\|T\|^2\cdot\|T_0^{-1}\|$. In particular, the diagonal operator $D_1 = \mathrm{diag} (B_{g, g} - z_g): g \in \cE \backslash \{ e_1, e_2, ..., e_\kappa, f_1, f_2, ..., f_\kappa\}$ is bounded.  Set $D = 0 \oplus D_1 \oplus 0 \in \cB(\hilb_e \oplus \hilb_1 \oplus \hilb_f)$, so that $D \in \cD$.   It then follows that with  \[
	T - D = \begin{bmatrix} T_1 & T_2 & T_3 \\ C & B - D_1 & T_6 \\ T_0 & A & T_9 \end{bmatrix}, \]
	every row  of $\begin{bmatrix} C & B-D_1 \end{bmatrix}$ is a linear combination of the rows of $\begin{bmatrix} T_0 & A\end{bmatrix}$, so that 
	\[
	\mathrm{rank} \begin{bmatrix} C & B-D_1 \\ T_0 & A \end{bmatrix} = \kappa.\]
	Finally, 
	\[
	\mathrm{rank}\, (T-D) \le \mathrm{rank} \begin{bmatrix} C & B-D_1 \\ T_0 & A \end{bmatrix} 
	+ \mathrm{rank}\, \begin{bmatrix} T_1 & T_2 & T_3 \end{bmatrix} 
	+ \mathrm{rank}\, \begin{bmatrix} 0 \\ T_6 \\T_9 \end{bmatrix} \le 3 \kappa. 
	\]
	(That the ranks of the last two operators are each at most $\kappa$ follows from the fact that the range of the second operator is contained in $\mathrm{span}\, \{e_1, e_2, ..., e_\kappa\}$, while the domain of the third operator is $\mathrm{span}\, \{ f_1, f_2, ..., f_\kappa\}$.)
\smallskip	

\noindent(b) \ \ \ 
	Next we consider the case where the underlying measure space is atomless.
	Let $Z_0\subseteq X$ be measurable and such that $P_{Z_0}\in\cD$ satisfies $\mathrm{rank}(P_{Z_0}^\perp TP_{Z_0})=\kappa$. Write $P_{Z_0}^\perp TP_{Z_0}=\sum_{j=1}^{\kappa}f_j\otimes g_j^*$ for some $f_j\in L^2(Z_0^c,\mu)$, $g_j\in L^2(Z_0,\mu)$, $1\le j\le\kappa$.
	
	Consider $Y_n$, $Z_n$, $n \ge 1$ chosen as in Lemma~\ref{lem4.42}.   Then 
	\[
	\mathrm{rank} (P_{Z_n}^\perp T P_{Z_n}) \le \kappa = \sup_{P \in \cP(\cD)}  \mathrm{rank} (P^\perp T P) \]
	for all $n \ge 1$.   On the other hand, $P_{Y_n} P_{Z_n} = 0$ implies that 
	\[
	\kappa = \mathrm{rank}\, (P_{Y_n} T P_{Z_n}) \le \mathrm{rank}\, (P_{Z_n}^\perp T P_{Z_n}) \le \kappa, \]
	and hence $\mathrm{rank}\, (P_{Z_n}^\perp T P_{Z_n}) = \kappa$.
	
	This allows us to write $P_{Z_n}^\perp T P_{Z_n} = \sum_{j=1}^\kappa u_j \otimes v_j^*$ where $u_j: Z_n^c \to \bbC$, $v_j: Z_n \to \bbC$, $1 \le j \le \kappa$ and each of $\{ u_j\}_{j=1}^\kappa$ and $\{ v_j\}_{j=1}^\kappa$ is linearly independent.  But then 
	\[
	P_{Y_n} (P_{Z_n}^\perp T P_{Z_n}) = P_{Y_n} T P_{Z_n} = \sum_{j=1}^\kappa f_j|_{Y_n} \otimes (g_j|_{Z_n})^* = \sum_{j=1}^\kappa u_j \otimes v_j^*. \]
	From this it follows that $\mathrm{span}\, \{ v_1, v_2, ..., v_\kappa\} = \mathrm{span}\, \{ {g_1}|_{Z_n}, {g_2}|_{Z_n}, ..., {g_\kappa}|_{Z_n} \}$.  Hence we may rewrite 
	\[
	P_{Z_n}^\perp T P_{Z_n} = \sum_{j=1}^\kappa \overline{u}_j^{(n)} \otimes (g_j|_{Z_n})^*\]
	for some choice of $\overline{u}_j^{(n)}: {Z_n}^c \to \bbC$, $\{ \overline{u}_j^{(n)}\}_{j=1}^\kappa$ linearly independent.  But then 
	\[
	\sum_{j=1}^\kappa \overline{u}_j^{(n)}|_{Y_n} \otimes ({g_j}|_{Z_n})^* = \sum_{j=1}^\kappa {f_j}|_{Y_n} \otimes ({g_j}|_{Z_n})^* \]
	implies that $\overline{u}_j^{(n)}|_{Y_n} = {f_j}|_{Y_n}$.
	
	Now, for $m \ge n$, we may write
	\[
	P_{Z_m}^\perp T P_{Z_m} = \sum_{j=1}^\kappa \overline{u}_j^{(m)} \otimes ({g_j}|_{Z_m})^*. \]
	Also, for $m \ge n$, observe that $P_{Z_n}^\perp (P_{Z_m}^\perp T P_{Z_m}) P_{Z_m} = P_{Z_n}^\perp (P_{Z_n}^\perp T P_{Z_n}) P_{Z_m}$, which implies that
	\[
	\sum_{j=1}^\kappa  \overline{u}_j^{(m)}|_{ {Z_n}^c} \otimes ({g_j}|_{Z_m})^*
	= \sum_{j=1}^\kappa \overline{u}_j^{(n)}|_{Y_n} \otimes ({g_j}|_{Z_m})^*, \]
	which in turn implies that $\overline{u}_j^{(m)}|_{ {Z_n}^c} = \overline{u}_j^{(n)}|_{Y_n}$.   This allows us to define $\overline{u}_j: X \to \bbC$ by 
	\[
	x \mapsto \lim_{n\to \infty} \overline{u}_j^{(n)} (x), \]
	which is well-defined on $X \backslash \cap_{n=1}^\infty Z_n$.  Note that $\cap_{n=1}^\infty Z_n$ has measure zero, so that $\overline{u}_j$ is defined almost everywhere on $X$. 
	
	Note that for any $n \ge 1$, 
	\[
	P_{Z_n}^\perp T P_{Z_n} = \sum_{j=1}^\kappa {\overline{u}_j}|_{ {Z_n}^c} \otimes ( {g_j}|_{ {Z_n}})^*. \]
	In a similar manner, by considering $P_{Y_n}  T P_{Y_n}^\perp$, we can construct functions $\overline{v}_1, \overline{v}_2, ..., \overline{v}_\kappa : X \to \bbC$ so that 
	\[
	P_{Y_n}  T P_{Y_n}^\perp = \sum_{j=1}^\kappa {f_j}|_{P_{Y_n} X} \otimes ( {v_j}|_{P_{Y_n}^\perp X})^* \mbox{ \ \ \ \ \ \ \ for all } n \ge  1. \]
	Now consider $F = \sum_{j=1}^\kappa \overline{u}_j \otimes \overline{v}_j^*$.     We claim that both $\{ \overline{u}_1, \overline{u}_2, ..., \overline{u}_\kappa \}$ and $\{ \overline{v}_1, \overline{v}_2, ..., \overline{v}_\kappa\}$ are linearly independent sets.   
Indeed, since $\{\overline{u}_j|_{Y_n} \}_{j=1}^\kappa = \{ f_j|_{Y_n}\}_{j=1}^\kappa$ is linearly independent, so is $\{ \overline{u}_1, \overline{u}_2, ..., \overline{u}_\kappa \}$;  similarly  $\{ \overline{v}_1, \overline{v}_2, ..., \overline{v}_\kappa\}$ is linearly independent.
 Hence $\mathrm{rank}\, F = \kappa$.  
	
	Let $D = T - F$.   We shall prove that $D \in \cD$ by showing that $D$ commutes with every projection in $\cD$.   
	
	Let $R \in \cP(\cD)$ be a projection satisfying $P_{Z_n} \le R \le P_{Y_n^c}$. That is, $R=P_V$ for some measurable set $V$ satisfying $Z_n\subseteq V\subseteq Y_n^c$. We have $\mathrm{rank}\, (R^\perp T R) \le \kappa$. 
	Consider $R^\perp T R$ as an operator from $R\cH$ to $R^\perp\cH$.   Relative to the decomposition $R\cH = P_{Z_n}\cH \oplus (P_{V\setminus Z_n}\cH)$ and $R^\perp\cH = (P_{V^c\setminus Y_n}\cH) \oplus P_{Y_n}\cH$, we may write 
	\begin{align*}
		R^\perp (P_{Z_n}^\perp T P_{Z_n})|_{R\cH} &= \begin{bmatrix} A & 0 \\ C & 0 \end{bmatrix}; \\
		(P_{Y_n} T P_{Y_n}^\perp)|_{R\cH} &= \begin{bmatrix} 0 & 0 \\ C & B \end{bmatrix}; \\
		P_{Y_n} T P_{Z_n}|_{R\cH} &= \begin{bmatrix} 0 & 0 \\ C & 0 \end{bmatrix}; 
		\mbox{ \ \ \ and } \\ 
		P_{V^c \setminus Y_n} T P_{V\setminus Z_n}|_{R\cH} &= \begin{bmatrix} 0 & X \\ 0 & 0 \end{bmatrix},
	\end{align*}	
	so that 
	\begin{align*}
	(R^\perp T R)|_{R\cH}
		&= \big(R^\perp (P_{Z_n}^\perp T P_{Z_n}) + (P_{Y_n} T P_{Y_n}^\perp) R - P_{Y_n} T P_{Z_n} + P_{V^c \setminus Y_n} T P_{V\setminus Z_n}\big)|_{R\cH} \\
		&= \begin{bmatrix} A & X \\ C & B \end{bmatrix}.
	\end{align*}	
	Since $\mathrm{rank}\, (T^\perp T R)|_{R \hilb} = \kappa = \mathrm{rank}\, C = \mathrm{rank}\, (P_{Y_n} T P_{Z_n})|_{R \hilb}$, 
	by Lemma~\ref{lem4.43aBanach}, $X$ is uniquely determined by $A, B$ and $C$.	
	Observe that
	\[
		R^\perp P_{Z_n}^\perp T P_{Z_n} = R^\perp (\sum_{j=1}^\kappa {\overline{u}_j}|_{Z_n^c} \otimes ({\overline{v}_j}|_{Z_n}^* ),\]
	\[
  P_{Y_n} T P_{Y_n}^\perp = (\sum_{j=1}^\kappa {\overline{u}_j}|_{Y_n} \otimes ({\overline{v}_j}|_{Y_n^c})^* )R, \quad\mbox{and}\]
	\[
  P_{Y_n} T P_{Z_n} = (\sum_{j=1}^\kappa {\overline{u}_j}|_{Y_n} \otimes ({\overline{v}_j}|_{Z_n})^* )R. \]
		
	But  $X = \sum_{j=1}^\kappa {\overline{u}_j}|_{V^c\setminus Y_n} \otimes ({\overline{v}_j}|_{V\setminus Z_n})^*$ is a particular choice which satisfies 
	\[
	\mathrm{rank}\, \begin{bmatrix} A & X \\ C & B \end{bmatrix} = \mathrm{rank}\, C = \kappa, \]
	for 
	\[
	R^\perp T R = \sum_{j=1}^\kappa {\overline{u}_j}|_{R^\perp} \otimes ({\overline{v}_j}|_{R})^* \]
	has rank $\kappa$ by virtue of the fact that $\{ {\overline{u}_j}|_{Y_n} \}_{j=1}^\kappa, \{ {\overline{v}_j}|_{Z_n} \}_{j=1}^\kappa$ are linearly independent families.
	
	Thus $R^\perp T R = R^\perp F R$ for any projection $R \in  \cD $ with $P_{Z_n} \le R \le P_{Y_n}$.
	
	Since $\mu (\cap_{n=1}^\infty Z_n) = 0 = \mu(\cap_{n=1}^\infty Y_n)$, it follows that $Q^\perp T Q = Q^\perp F Q$ for all projections $Q \in \cD$.
	But then
	\begin{align*}
	Q D - D Q 
		&=	Q (T - F) - (T - F) Q \\
		&= (Q T - T Q) - (Q F - F Q) \\
		&= (Q T Q^\perp + Q T Q) - (Q^\perp T Q + Q T Q) - (Q F - FQ) \\
		&= Q F Q^\perp - Q^\perp F Q - Q F + F Q \\
		&= - Q F Q + Q F Q \\
		&= 0 
	\end{align*}
	for all projections $Q \in \cD$, whence $D = T - F \in \cD$, completing the proof in this setting.
	
	Finally, let us show that the decomposition $T=D+F$ is unique. Let $D_1,D_2\in\cD$ and $F_1,F_2\in\cF(\cH)$ be such that $T=D_1+F_1=D_2+F_2$. Then $F_1-F_2=D_2-D_1\in\cD$. Since all non-zero multiplication operators have infinite rank, we have $F_1=F_2$. But then $D_1=T-F_1=T-F_2=D_2$.
\smallskip
	
\noindent(c)\ \ \ 
	Now consider the case of an arbitrary, $\sigma$-finite measure space $(X, \mu)$.   We first partition the space into its discrete and continuous parts $(X_d, \mu_d)$ and $(X_c, \mu_c)$ (where $\mu_d = \mu|_{X_d}$ and $\mu_c = \mu|_{X_c}$ respectively) and consider the corresponding decomposition of the Hilbert space:
	\[
	\hilb = L^2(X, \mu) = L^2(X_d, \mu_d) \oplus L^2(X_c, \mu_c). \]
	Let $P_d$ (resp. $P_c$) denote the orthogonal projection of $\hilb$ onto $\hilb$, and note that $P_d, P_c \in \cD$.  As such, this  produces a  decomposition of the masa $\cD = \cD_d \oplus \cD_c$.   Relative to the above decomposition of $\hilb$, let us write $T = \begin{bmatrix} T_1 & T_2 \\ T_3 & T_4 \end{bmatrix}$.   It is not too difficult to see that the condition that $\sup_{P \in \cP(\cD)} \mathrm{rank}\, (P^\perp T P) = \kappa$ implies that 
	\[
	\sup_{P \in \cP(\cD_d)} P_d (P^\perp T_1 P) P_d \le \kappa, \]
	and 
	\[
	\sup_{P   \in \cP(\cD_c)} P_c (P^\perp T_4 P) P_c \le \kappa. \]
	Also, the estimates $\mathrm{rank}\, T_2$, $\mathrm{rank}\, T_3 \le \kappa$ are clear.   

	From part (a), we may write  $T_1 = D_1 + F_1$ with $T_1 \in \cD_d$, $F_1 \in \cB(L^2(X_d, \mu_d))$ and $\mathrm{rank}\, F_1 \le 3 \kappa$.  In a similar way, by part (b), there exist $D_4 \in \cD_c$, $F_4 \in \cB(L^2(X_c, \mu_c))$ with $\mathrm{rank}\, F_4 \le \kappa$ so that $T_4 = D_4 + F_4$.
	\smallskip
	
	Finally, 	$T = D + F$, where $D = D_1 \oplus D_4 \in \cD$, and $F = \begin{bmatrix} F_1 & T_2 \\ T_3 & F_4 \end{bmatrix}$ satisfies  $\mathrm{rank} F \le  \mathrm{rank}\, F_1 + \mathrm{rank}\, T_2 +  \mathrm{rank}\, T_3 +  \mathrm{rank}\, F_4 \le (3 + 1 + 1 + 1) \kappa = 6 \kappa$. 
\end{pf}

\begin{rem} It should be clear that in the case of a discrete masa, no uniqueness of the decomposition $T=D+F$ can be expected. Indeed, let, for example, $\cD=\ell_\infty$ be a masa in $\cB(\ell_2)$ and $T\in\cB(\cH)$ be arbitrary. By Theorem~\ref{thm4.5} we can find a decomposition $T=D+F$ with $D\in\cD$, $F\in\cF(\cH)$. Put $D_1=D+e_1\otimes e_1^*$ and $F_1=F-e_1\otimes e_1^*$, where $e_1=(1,0,0,\dots)$ is the first basic vector. It is clear that $D_1\in\cD$, $F_1\in\cF(\cH)$, and $T=D_1+F_1$.
\end{rem}

\bigskip

The following theorem is a version of \cite[Theorem~2.7]{Popov2010} in the context of masas.

\begin{thm}  
Suppose that $\cA$ is a norm-closed algebra and that $\cD \subseteq \cB(L^2(X, \mu))$ is a masa.   Suppose also that for all projections $P \in \cD$ and all $A \in \cA$ we have that 
\[
\mathrm{rank}\, (A P - P A P) < \infty. \]
Then there exists $\kappa > 0$ so that $\mathrm{rank}\, (A P - P A P) \le \kappa$ for all $A \in \cA$ and  $P \in \cP(\cD)$.
\end{thm}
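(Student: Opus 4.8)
The plan is a Baire category argument, in close parallel with the proof of \cite[Theorem~2.7]{Popov2010}. For $A\in\cA$ write $\delta(A):=\sup\{\,\mathrm{rank}\,(AP-PAP):P\in\cP(\cD)\,\}$. The first point is that $\delta$ is finite-valued: the hypothesis says in particular that for each fixed $A\in\cA$ and each projection $P\in\cD$ onto a half-space, the range of $P$ is almost-invariant for $A$, so Theorem~\ref{masa-bounded-defect} applies to $A$ and produces a finite bound for $\mathrm{rank}\,(AP-PAP)$ valid over \emph{all} of $\cP(\cD)$; hence $\delta(A)<\infty$ for every $A\in\cA$. The whole content of the theorem is to upgrade this to a bound independent of $A$.

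Next I would record two elementary properties of $\delta$. For a fixed $P\in\cP(\cD)$ one has $(A+B)P-P(A+B)P=(AP-PAP)+(BP-PBP)$, so the rank of the left side is at most the sum of the two ranks on the right; taking the supremum over $P$ gives subadditivity, $\delta(A+B)\le\delta(A)+\delta(B)$. Likewise $(\lambda A)P-P(\lambda A)P=\lambda\,(AP-PAP)$ shows $\delta(\lambda A)=\delta(A)$ for every scalar $\lambda\neq0$; in particular $\delta(-A)=\delta(A)$. (Only the linear structure of $\cA$ enters here; the algebra hypothesis is not actually needed for this theorem.)

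Now for $n\in\bbN$ put $\cA_n:=\{\,A\in\cA:\mathrm{rank}\,(AP-PAP)\le n\text{ for all }P\in\cP(\cD)\,\}$. Each $\cA_n$ is norm-closed in $\cA$: for fixed $P$ the map $A\mapsto AP-PAP$ is norm-continuous, and the set of operators of rank at most $n$ is norm-closed --- if $S_k\to S$ in norm with $\mathrm{rank}\,S_k\le n$ but $\mathrm{rank}\,S\ge n+1$, pick $x_1,\dots,x_{n+1}$ with $Sx_1,\dots,Sx_{n+1}$ linearly independent, and note that linear independence of a finite tuple of vectors is an open condition, so $S_kx_1,\dots,S_kx_{n+1}$ would be linearly independent for large $k$, a contradiction. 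Thus $\cA_n=\bigcap_{P\in\cP(\cD)}\{A\in\cA:\mathrm{rank}\,(AP-PAP)\le n\}$ is an intersection of closed sets. Since $\delta(A)<\infty$ for each $A$, we have $\cA=\bigcup_{n\ge1}\cA_n$; and $\cA$, being norm-closed in $\cB(L^2(X,\mu))$, is a complete metric space, so by the Baire category theorem some $\cA_{n_0}$ (which we may take with $n_0\ge1$) has nonempty interior in $\cA$: there exist $A_0\in\cA$ and $r>0$ with $\{A\in\cA:\|A-A_0\|<r\}\subseteq\cA_{n_0}$.

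Finally I would extract the uniform bound. Given a nonzero $A\in\cA$, set $\lambda=r/(2\|A\|)>0$; then $A_0+\lambda A\in\cA$ and $\|(A_0+\lambda A)-A_0\|=r/2<r$, so $A_0+\lambda A\in\cA_{n_0}$, i.e. $\delta(A_0+\lambda A)\le n_0$, while also $\delta(A_0)\le n_0$. Using scale-invariance, subadditivity, and $\delta(-A_0)=\delta(A_0)$,
\[
\delta(A)=\delta(\lambda A)=\delta\bigl((A_0+\lambda A)+(-A_0)\bigr)\le\delta(A_0+\lambda A)+\delta(-A_0)\le 2n_0 .
\]
Since $\delta(0)=0$ as well, $\kappa:=2n_0$ satisfies $\mathrm{rank}\,(AP-PAP)\le\kappa$ for all $A\in\cA$ and $P\in\cP(\cD)$. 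The only point requiring a little care is the norm-closedness of the $\cA_n$ (equivalently, lower semicontinuity of the rank); no genuine obstacle arises, and the familiar self-improving trick of scaling into the ball and subtracting the centre does the rest of the work.
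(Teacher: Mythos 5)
Your proof is correct and follows essentially the same route as the paper: for each $A\in\cA$ obtain a finite bound on $\sup_P\mathrm{rank}\,(AP-PAP)$ (you cite Theorem~\ref{masa-bounded-defect} directly, while the paper routes through the decomposition $T=D+F$ of Theorem~\ref{thm4.5}, itself resting on Theorem~\ref{masa-bounded-defect}), write $\cA$ as a countable union of norm-closed sets, invoke Baire, and use the scale-and-subtract trick to get a uniform bound of twice the constant from the dense-interior set. The slight re-routing of the first step is a harmless simplification; the Baire argument, the closedness via lower semicontinuity of rank, and the final $2\kappa$ extraction all match the paper.
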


\begin{pf}
Let $\cE_k := \{ T \in \cA:  \mathrm{rank}\, (I-P)T P \le k \mbox{ for all } P \in \cP(\cD) \}$.   Since every member in $\cA$ is of the form $D + F$ for some $D\in \cD$ and $F\in\cF(\hilb)$ by Theorem~\ref{thm4.5}, it follows that 
\[
\cA = \cup_{k=1}^\infty \cE_k. \]

We now claim that $\cE_k$ is closed for each $k \ge 1$.   To that end, let $P \in \cP(\cD)$.   If $T_n = D_n + F_n \in \cE_k$ and $\lim_n T_n = T \in \cA$, then 
\begin{align*}
	(I-P) T P 
		&= \lim_n (I-P) T_n P \\
		&= \lim_n (I-P) (D_n+F_n) P \\
		&= \lim_n (I-P)F_n P.
\end{align*}
But $\mathrm{rank}\, (I-P)F_nP \le k$ for all $n \ge 1$ implies by the lower-semicontinuity of the rank that $\mathrm{rank}\, (I-P)TP \le k$, whence $T \in \cE_k$ and $\cE_k$ is closed.

By the Baire Category Theorem, we get that the interior  of $\cE_k$ is non-empty for some $k > 0$.   Choose such a $k$, and suppose that $T_0 = D_0 + F_0$ lie in the interior of $\cE_k$.	Then $\cE_k - T_0 := \{ T - T_0: T \in \cE_k \}$ contains a ball in $\cA$ of positive radius centred at $0$.   But then for all $A \in \cA$, there exists $t > 0$ so that $t A \in \cE_k - T_0$.   That is, $t A = T_1 - T_0$ for some $T_1 \in \cE_k$.   

Hence  for all projections $P \in \cD$, 
\begin{align*}
	\mathrm{rank}(I-P) A P 
		&= \mathrm{rank} (I-P) tA P \\
		&= \mathrm{rank} (I-P) (T_1 - T_0) P \\
		&\le \mathrm{rank} (I-P) T_1 P + \mathrm{rank} (I-P)T_0 P \\
		&\le k + k. 
\end{align*}
			
Setting $\kappa = 2 k$ completes the proof.			
			
\end{pf}		
			

\begin{cor}\label{cor-4.10} Let $\cA\subseteq\cB(\cH)$ be a norm-closed algebra and $\cD\subseteq\cB(\cH)$ be a masa such that $(I-P)TP\in\cF(\cH)$ for all $T\in\cA$ and  $P\in \cP(\cD)$. Then there exists $k\in\bbN$ such that $\cA\subseteq\cD+\cF_k(\cH)$ where $\cF_k(\cH)=\{T\in\cB(\cH)\mid\mathrm{rank}\,T\le k\}$.
\end{cor}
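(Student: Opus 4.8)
The plan is to combine the uniform defect bound just established (the preceding theorem) with the decomposition theorem, Theorem~\ref{thm4.5}, applied one operator at a time. First I would note that the hypothesis of the corollary---that $(I-P)TP\in\cF(\cH)$ for all $T\in\cA$ and all $P\in\cP(\cD)$---is exactly the hypothesis of the preceding theorem, since $(I-P)TP=TP-PTP$. That theorem therefore supplies a single $\kappa\in\bbN$ with $\mathrm{rank}\bigl((I-P)TP\bigr)\le\kappa$ for \emph{every} $T\in\cA$ and \emph{every} $P\in\cP(\cD)$.

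Next, fix an arbitrary $T\in\cA$. Since $(I-P)TP$ has finite rank for each $P\in\cP(\cD)$, the range of every projection in $\cD$ is almost-invariant for $T$, so Theorem~\ref{thm4.5} applies and yields $D_T\in\cD$ and $F_T\in\cF(\cH)$ with $T=D_T+F_T$. Crucially, that theorem also bounds $\mathrm{rank}\,F_T$ by (at most) $6\kappa_T$, where $\kappa_T:=\sup\{\mathrm{rank}\,(I-P)TP:P\in\cP(\cD)\}$; and by the first step $\kappa_T\le\kappa$ for every $T\in\cA$. Hence $\mathrm{rank}\,F_T\le 6\kappa$ for all $T\in\cA$.

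Setting $k=6\kappa$, we conclude $T=D_T+F_T\in\cD+\cF_k(\cH)$ for each $T\in\cA$, i.e. $\cA\subseteq\cD+\cF_k(\cH)$. There is no real obstacle: all the substantive work resides in Theorem~\ref{thm4.5} and in the preceding theorem. The only point requiring attention is that the rank estimate in Theorem~\ref{thm4.5} is phrased in terms of the operator-dependent quantity $\kappa_T$, so one genuinely needs the uniform bound from the preceding theorem in order to extract a single $k$ valid for the entire algebra.
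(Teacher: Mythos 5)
Your proof is correct and follows exactly the route the paper intends: invoke the preceding theorem (the masa version of the Popov uniform-defect result) to get a single $\kappa$, then for each $T\in\cA$ apply Theorem~\ref{thm4.5}(c) with $\kappa_T\le\kappa$ to obtain $T=D_T+F_T$ with $\mathrm{rank}\,F_T\le 6\kappa$, and set $k=6\kappa$. The paper does not spell out a proof of this corollary precisely because it is this immediate consequence, and you have correctly identified the only subtlety, namely that the rank bound in Theorem~\ref{thm4.5} is a priori operator-dependent and the uniform bound is what makes a single $k$ work for all of~$\cA$.
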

		

\bigskip

In the case of a continuous measure space, we are able to say more about the structure of an algebra $\cA$ satisfying the condition above. We need an auxiliary lemma.

\begin{lem}\label{results-of-radjavi}
Suppose that $\cL\subseteq\cF_k(\hilb)$ is a linear subspace. Then there exist two invertible operators $S$ and $T$ in $\cB(\hilb)$ and a projection $P=P^*=P^2\in\cB(\hilb)$ of rank at most $k$ such that $S\cL T\subseteq P\cB(\hilb)+\cB(\hilb)P$.
\end{lem}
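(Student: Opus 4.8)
The plan is to reduce to the case in which $\cL$ itself contains an orthogonal projection of rank $r:=\sup_{A\in\cL}\mathrm{rank}\,A$, and then to observe that this case is essentially automatic. Since $\cL\subseteq\cF_k(\hilb)$ we have $r\le k<\infty$; if $r=0$ then $\cL=\{0\}$ and $P=0$, $S=T=I$ work, so I would assume $r\ge 1$ and fix $A_0\in\cL$ with $\mathrm{rank}\,A_0=r$. A routine singular-value-decomposition argument then produces invertible $S,T\in\cB(\hilb)$ with $SA_0T$ an orthogonal projection of rank $r$: writing $A_0=\sum_{j=1}^{r}s_j\,x_j\otimes y_j^*$ with $\{x_j\}$, $\{y_j\}$ orthonormal and $s_j>0$, extending both families to orthonormal bases, and setting $S=\sum_{j\le r}s_j^{-1}y_j\otimes x_j^*+\sum_{j>r}y_j\otimes x_j^*$ and $T=I$, one checks that $S$ is bounded and boundedly invertible (only the $r$ positive singular values are being inverted) and that $SA_0T=\sum_{j\le r}y_j\otimes y_j^*=:P$, the orthogonal projection onto $[y_j]_{j=1}^{r}$, of rank $r\le k$. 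Since left and right multiplication by invertibles preserve rank, $S\cL T$ is still a subspace of $\cF_r(\hilb)$ and contains $P$; so it suffices to show $(I-P)B(I-P)=0$ for every $B\in S\cL T$, which gives $S\cL T\subseteq P\cB(\hilb)+\cB(\hilb)P$.

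For this I would fix $B\in S\cL T$ and write $B=\begin{bmatrix}B_1&B_2\\B_3&B_4\end{bmatrix}$ relative to $\hilb=P\hilb\oplus P^\perp\hilb$, where $\dim P\hilb=r$. For all but finitely many $t\in\bbC$ the operator $I+tB_1$ is invertible on the $r$-dimensional space $P\hilb$, and then
\[
P+tB=\begin{bmatrix}I+tB_1&tB_2\\tB_3&tB_4\end{bmatrix}\in S\cL T
\]
has rank at most $r$. Multiplying on the left by $\begin{bmatrix}I&0\\-tB_3(I+tB_1)^{-1}&I\end{bmatrix}$ and on the right by $\begin{bmatrix}I&-(I+tB_1)^{-1}tB_2\\0&I\end{bmatrix}$ --- both invertible, hence rank-preserving --- turns $P+tB$ into $\begin{bmatrix}I+tB_1&0\\0&tB_4-t^2B_3(I+tB_1)^{-1}B_2\end{bmatrix}$; as $I+tB_1$ is invertible on an $r$-dimensional space, this matrix has rank $r+\mathrm{rank}\bigl(tB_4-t^2B_3(I+tB_1)^{-1}B_2\bigr)$, which must therefore equal $r$, so $tB_4-t^2B_3(I+tB_1)^{-1}B_2=0$, i.e. $B_4=tB_3(I+tB_1)^{-1}B_2$ for all such $t$. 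Letting $t\to 0$ and using that $(I+tB_1)^{-1}$ stays norm-bounded, the right-hand side tends to $0$, so $B_4=0$, which is precisely $(I-P)B(I-P)=0$.

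The argument is short, and its only real content is the block-triangular rank identity, which is a direct analogue of the Schur-complement manipulation in the proof of Lemma~\ref{lem4.43aBanach} and remains valid verbatim in the Hilbert-space setting since all the off-diagonal blocks in question have finite rank and $I+tB_1$ is boundedly invertible on a finite-dimensional space. The point worth flagging is that no induction on $k$ is needed: once a single maximal-rank element has been normalized to a projection, it already forces the $P^\perp\hilb$-to-$P^\perp\hilb$ corner of every operator in $\cL$ to vanish --- and this normalization is legitimate precisely because membership in $\cF_r(\hilb)$ is preserved under left and right multiplication by invertibles.
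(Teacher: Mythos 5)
Your proposal is correct and follows essentially the same route as the paper: pick an element $A_0$ of maximal rank $r$ in $\cL$, normalize it by invertibles to a rank-$r$ orthogonal projection $P$, then perturb by a scalar multiple and use the Schur-complement/rank manipulation of Lemma~\ref{lem4.43aBanach} to force the $P^\perp\hilb$-to-$P^\perp\hilb$ corner of every element to vanish. The only cosmetic difference is that you expand $P+tB$ and let $t\to 0$ where the paper works with $A+\lambda A_0$ and lets $\lambda\to\infty$ --- these are the same computation after the substitution $t=\lambda^{-1}$ --- and you spell out the SVD normalization (with $T=I$) that the paper asserts without detail.
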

\begin{proof}
Pick an operator $A_0\in\cL$ of maximal rank. Let $\kappa=\mathrm{rank}\,(A_0)$. There exist two invertible operators $S$ and $T$ such that $SA_0T=\begin{bmatrix} I_\kappa & 0 \\ 0 & 0 \end{bmatrix}$. We claim that $P=\begin{bmatrix} I_\kappa & 0 \\ 0 & 0 \end{bmatrix}$ satisfies the conclusion of the lemma.

Indeed, let $A\in\cL$ be arbitrary. Write $SAT=\begin{bmatrix} A_{11} & A_{12} \\ A_{21} & A_{22} \end{bmatrix}$. It is enough to prove that $A_{22}=0$. For every scalar $\lambda$, we have $S(A+\lambda A_0)T=\begin{bmatrix} \lambda I_\kappa+A_{11} & A_{12} \\ A_{21} & A_{22} \end{bmatrix}$. The matrix $\lambda I_\kappa+A_{11}$ is invertible for all large enough~$\lambda$, so that $\mathrm{rank}\,\begin{bmatrix} \lambda I_\kappa+A_{11} & A_{12} \\ A_{21} & A_{22} \end{bmatrix}=\mathrm{rank}\,(\lambda I_\kappa+A_{11})$ for large~$\lambda$. By the reasoning analogous to that of Lemma~\ref{lem4.43aBanach}, we get $A_{22}=A_{21}(\lambda I_\kappa+A_{11})^{-1}A_{12}$. Taking $\lambda\to\infty$, we find that $A_{22}=0$.
\end{proof}

\begin{thm} \label{thm4.8}
Let $\mu$ be a continuous measure on a measure space $X$.  Suppose that $\hilb=L^2(X, \mu)$ and $\cA \subseteq \cB(\hilb)$ is a closed algebra such that $\mathrm{rank}\, (P^\perp T P) \le \kappa $ for some constant $\kappa$ and for all projections $P \in \cD$ and all $T \in \cA$.   Then there exist projections $Q, R \in\bofh$ so that 
\[
\cA \subseteq \cD + Q \bofh +  \bofh R, \]
with $\mathrm{rank}\, Q \le \kappa$ and $ \mathrm{rank}\, R \le \kappa$.
\end{thm}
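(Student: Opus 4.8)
The plan is to linearize the decomposition furnished by Theorem~\ref{thm4.5}(b) and then feed the resulting space of finite-rank parts into Lemma~\ref{results-of-radjavi}. Since $\mu$ is continuous and, by hypothesis, $\mathrm{rank}\,(P^\perp T P) \le \kappa$ for every $T \in \cA$ and every $P \in \cP(\cD)$, Theorem~\ref{thm4.5}(b) applies to each individual $T \in \cA$ and produces a \emph{unique} decomposition $T = D_T + F_T$ with $D_T \in \cD$, $F_T \in \cF(\hilb)$ and $\mathrm{rank}\,F_T \le \kappa$. The uniqueness is what makes everything work: because $\cA$ is a linear space, $(D_T + D_{T'}) + (F_T + F_{T'})$ is a decomposition of $T + T'$ of the same type, and similarly $\lambda D_T + \lambda F_T$ for $T \in \cA$, $\lambda \in \bbC$; hence $T \mapsto F_T$ is linear and $\cL := \{F_T : T \in \cA\}$ is a linear subspace of $\cF_\kappa(\hilb)$ with $\cA \subseteq \cD + \cL$. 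Note that the uniform bound $\kappa$ comes directly from the hypothesis, so Corollary~\ref{cor-4.10} is not needed at this point.

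Next I would apply Lemma~\ref{results-of-radjavi} to $\cL$: there are invertible $S_0, T_0 \in \bofh$ and an orthogonal projection $P_0$ with $\mathrm{rank}\,P_0 \le \kappa$ so that $S_0 \cL T_0 \subseteq P_0\bofh + \bofh P_0$. Since left (respectively right) multiplication by an invertible operator is a bijection of $\bofh$, this unravels to
\[
\cL \subseteq S_0^{-1}\bigl(P_0\bofh + \bofh P_0\bigr)T_0^{-1} = (S_0^{-1}P_0)\,\bofh + \bofh\,(P_0T_0^{-1}).
\]
The last step is to replace these two non-selfadjoint ``corners'' by genuine projections. Every operator of the form $S_0^{-1}P_0 B$ has range inside the finite-dimensional subspace $S_0^{-1}(\ran P_0)$, so with $Q$ the orthogonal projection onto that subspace we get $(S_0^{-1}P_0)\bofh \subseteq Q\bofh$ and $\mathrm{rank}\,Q = \mathrm{rank}\,P_0 \le \kappa$. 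Dually, every operator of the form $B P_0 T_0^{-1}$ annihilates the closed, finite-codimensional subspace $T_0(\ker P_0)$, so with $R$ the orthogonal projection onto $T_0(\ker P_0)^\perp$ we get $\bofh(P_0T_0^{-1}) \subseteq \bofh R$ and $\mathrm{rank}\,R = \mathrm{rank}\,P_0 \le \kappa$. Combining, $\cL \subseteq Q\bofh + \bofh R$, whence $\cA \subseteq \cD + \cL \subseteq \cD + Q\bofh + \bofh R$.

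The genuinely nontrivial step — and the one I would be most careful to get right — is the linearity of $T \mapsto F_T$, since all the rest is bookkeeping with ranges and kernels of products of operators. That linearity depends entirely on the uniqueness clause of Theorem~\ref{thm4.5}(b), which holds only for continuous masas (in the atomic case uniqueness fails, as the remark after Theorem~\ref{thm4.5} shows), and this is exactly why the conclusion of the present theorem is stated only for continuous $\mu$.
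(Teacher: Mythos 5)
Your argument is correct and is essentially the paper's own proof: you invoke the uniqueness clause of Theorem~\ref{thm4.5}(b) to make $T\mapsto F_T$ linear, conclude that $\cF_\cA=\{F_T:T\in\cA\}$ is a linear subspace of $\cF_\kappa(\hilb)$, and then apply Lemma~\ref{results-of-radjavi}, replacing the resulting idempotents $S_0^{-1}P_0S_0$ and $T_0P_0T_0^{-1}$ by the orthogonal projections $Q$ onto $S_0^{-1}(\ran P_0)$ and $R$ onto $T_0(\ker P_0)^\perp=\ker(T_0P_0T_0^{-1})^\perp$. These are exactly the projections the paper constructs, so the two proofs coincide step for step.
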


\smallskip

\noindent{\textbf{Remark.}} In view of Corollary~\ref{cor-4.10} the existence of $\kappa$ from the hypothesis of Theorem~\ref{thm4.8} follows from the condition $\mathrm{rank}\, (P^\perp T P)<\infty$ for all projections $P \in \cD$ and all $T \in \cA$.

\smallskip

\begin{pf}
By Theorem~\ref{thm4.5} (b), each $T \in \cA$ may be written in a  unique  way as $T  = D_T + F_T$, where $D_T \in \cD$, and $\mathrm{rank}\, F_T < \infty$.   Since $\cA$ is a vector space, so is the set $\cF_\cA := \{ F_T : T \in \cA\}$.   Indeed, for $T_1, T_2 \in \cA$ and $\lambda \in \bbC$, 
	\[
	\lambda_1 T_1 + T_2 = (\lambda D_{T_1} + D_{T_2}) + (\lambda F_{T_1} + F_{T_2}) \]
	is a particular decomposition of $\lambda T_1 + T_2$ as the sum of an element of the masa with a finite rank operator.   Since the decomposition is unique, it follows that $D_{\lambda_1 T_1 + T_2} = \lambda_1 D_{T_1} + D_{T_2}$, and similarly, $F_{\lambda_1 T_1 + T_2} = \lambda_1 F_{T_1} + F_{T_2}$.    	
	Moreover, as we saw in Theorem~\ref{thm4.5}(b), $\mathrm{rank}\, F_T \le \kappa$ for all $T \in \cA$, and so $\cF_\cA$ is a vector space of operators having rank at most $\kappa$. 
	
	By Lemma~\ref{results-of-radjavi}, there exist invertible operators $S,T\in\cB(\hilb)$ and a projection of rank at most $k$ such that $\cF_\cA\subseteq S^{-1}P\cB(\hilb)T^{-1}+S^{-1}\cB(\hilb)PT^{-1}\subseteq S^{-1}PS\cB(\hilb)+\cB(\hilb)TPT^{-1}$. Now, if $Q$ is the orthogonal projection onto the range of $S^{-1}PS$ and $R$ is the orthogonal projection onto $\ker(TPT^{-1})^\perp$ then $\mathrm{rank}\, Q$, $\mathrm{rank}\, R \le \kappa$ and $\cF_\cA \subseteq Q \bofh + \bofh R$.   
\end{pf}


\begin{eg} \label{eg4.9}
While Theorem~\ref{thm4.8} guarantees the existence of projections $Q, R \in \bofh$ so that $A \subseteq \cD + Q \bofh + \bofh R$, there may not exist $Q, R \in \cD$ so that $A \subseteq \cD + Q \bofh + \bofh R$.  
To see this, let $\hilb = L^2([0,1], dx)$ where ``dx" denotes Lebesgue measure and let $F \in \bofh$ be a rank one operator without invariant subspaces of the form $L^2(X, dx)$, where $X\subseteq[0,1]$ measurable.   For example, we may let $f: [0,1] \to \bbC$ be the constant function $f(x) = 1$ for all $x \in [0,1]$, and set $F = f \otimes f^*$.   Set $\cA = \bbC I + \bbC F$.   Let $\cD$ denote the canonical masa $\cD \simeq L^\infty([0,1], dx)$ in $\bofh$.  Then $\cA$ is a norm-closed algebra and clearly $\cA \subseteq \cD + \cF_1$   where   $\cF_1$ denotes the set of all rank one operators on  $\hilb$.   
\end{eg}


\begin{rem} \label{rem4.10} 
As we have just seen, the projections $Q$ and $R$ obtained in Theorem~\ref{thm4.8} do not have to lie in $\cD$.   We remark, however, that if one of $Q$ and $R$ is zero -- say $R = 0$ -- then $Q$ must commute with every $D \in \cD$ appearing as the diagonal of an element $T \in \cA$.  That is, for all $T = D_T + F_T \in \cA$, $Q D_T = D_T Q$.

Indeed, choose $T_0 \in \cA$ and $P_0$ a projection in $\cD$ so that 
\[
\mathrm{rank} (P_0^\perp T_0 P_0) = \kappa = \sup_{T \in \cA,\  P  \in \cP(\cD) } \mathrm{rank}\, (P^\perp T P).\]
Since $T_0 = D_0 + F_0$ for some $D_0 \in \cD$ and $F_0 \in Q \bofh$ by Theorem~\ref{thm4.8}, it is clear that $\mathrm{rank}\, P_0^\perp F_0 P = \kappa$, and in particular, $\mathrm{rank}\, F_0 = \kappa = \mathrm{rank}\, Q$.   In other words, $\ran\, F_0 = Q\hilb$.   

For any $T = D + F \in \cA$, $T_0 T \in \cA$, and $T T_0 = (D D_0) + (F  D_0 + D F_0 + F F_0)$.   Since $D  D_0 \in \cD$ and $F D_0 + D F_0 + F F_0$ is finite-rank, the uniqueness of this decomposition (it is here that we need the measure to be continuous) implies that $F D_0 + D F_0 + F F_0 \in Q \bofh$.   Hence $Q (F D_0 + D F_0 + F F_0) = F D_0 + D F_0 + F F_0$, which implies that $Q D F_0 = D F_0$.   But $\ran\, F_0 = \ran\, Q$, and thus $Q D Q = D Q$.   Thus $\ran\, Q$ is a finite-dimensional invariant subspace for the normal operator $D$.   But any such subspace must be reducing for $D$, and hence $Q D = D Q$.
\end{rem}

\bigskip

The rest of this section is motivated by \cite[Proposition~5.1]{APTT} which states that if $T\in\cB(\cX)$ is such that every half-space in $\fX$ is $T$-almost-invariant then $T$ has many invariant subspaces. We show that, at least for the case of operators acting on a Hilbert space $\cH$, much more is true: the operator must be of the form $\alpha I+F$ where $\mathrm{rank}\,F<\infty$.

\begin{prop} \label{nov25_1.2}
Let $T \in \bofh$.   Suppose that given any masa $\cD$ for $\bofh$, we can find $D \in \cD$ and a finite-rank operator $F$ so that $T = D + F$.   Then there exists $\alpha \in \bbC$ and $H$ a finite-rank operator so that $T = \alpha I + H$.
\end{prop}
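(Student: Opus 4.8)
The plan is to read off the decomposition of $T$ over one continuous masa and then eliminate every non-scalar possibility by testing the hypothesis against a suitably chosen atomic masa. We may assume $\hilb$ is infinite-dimensional, since otherwise $\alpha=0$ and $H=T$ already works. Identify $\hilb$ with $L^{2}([0,1],m)$, $m$ Lebesgue measure, and let $\cD_{0}=\{M_{g}:g\in L^{\infty}([0,1],m)\}$ be the associated continuous masa. Applying the hypothesis to $\cD_{0}$ produces $g\in L^{\infty}([0,1],m)$ and $F\in\cF(\hilb)$ with $T=M_{g}+F$. It then suffices to show that $g$ is $m$-a.e.\ equal to a constant $\alpha$: for then $M_{g}=\alpha I$ and $T=\alpha I+F$ with $F$ of finite rank.

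Assume, towards a contradiction, that $g$ is not a.e.\ constant. Then one can find distinct $a,b\in\bbC$ and $\delta>0$ with $B(a,\delta)\cap B(b,\delta)=\varnothing$ such that $E_{a}:=g^{-1}(B(a,\delta))$ and $E_{b}:=g^{-1}(B(b,\delta))$ both have positive measure (take $a,b$ in the essential range of $g$ and $\delta=\tfrac15\abs{a-b}$). In particular $L^{2}(E_{a})$ and $L^{2}(E_{b})$ are infinite-dimensional. I would then fix a unitary $\psi\colon L^{2}(E_{a})\to L^{2}(E_{b})$ and an orthonormal basis $(e_{n})_{n\ge1}$ of $L^{2}(E_{a})$, and set $f_{n}=\psi e_{n}$, $u_{n}=\tfrac1{\sqrt2}(e_{n}+f_{n})$, $v_{n}=\tfrac1{\sqrt2}(e_{n}-f_{n})$. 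One checks that $\{u_{n}\}_{n}\cup\{v_{n}\}_{n}$ is an orthonormal basis of $L^{2}(E_{a})\oplus L^{2}(E_{b})$; adjoining an orthonormal basis of $\bigl(L^{2}(E_{a})\oplus L^{2}(E_{b})\bigr)^{\perp}$ yields an orthonormal basis of $\hilb$, and I let $\cD_{1}$ be the atomic masa of all operators diagonal with respect to this basis.

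The key computation is the estimate of $\langle Tu_{n},v_{n}\rangle$. Since $M_{g}$ preserves supports, $M_{g}e_{n}\in L^{2}(E_{a})$ and $M_{g}f_{n}\in L^{2}(E_{b})$, so the cross terms drop out and $\langle M_{g}u_{n},v_{n}\rangle=\tfrac12\bigl(\langle M_{g}e_{n},e_{n}\rangle-\langle M_{g}f_{n},f_{n}\rangle\bigr)$. As $\abs{g-a}<\delta$ on $E_{a}$ and $\abs{g-b}<\delta$ on $E_{b}$, we get $\abs{\langle M_{g}e_{n},e_{n}\rangle-a}\le\delta$ and $\abs{\langle M_{g}f_{n},f_{n}\rangle-b}\le\delta$, whence $\abs{\langle M_{g}u_{n},v_{n}\rangle}\ge\tfrac12(\abs{a-b}-2\delta)=\tfrac{3\delta}2$. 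Because $(u_{n})$ is orthonormal it tends weakly to $0$, so $Fu_{n}\to0$ in norm, and therefore $\abs{\langle Tu_{n},v_{n}\rangle}\ge\delta$ for all large $n$. On the other hand, applying the hypothesis to $\cD_{1}$ gives $T=D_{1}+F_{1}$ with $D_{1}\in\cD_{1}$ and $F_{1}\in\cF(\hilb)$; since $D_{1}$ is diagonal with respect to our basis, $\langle D_{1}u_{n},v_{n}\rangle=0$, and since $F_{1}$ is compact, $\langle F_{1}u_{n},v_{n}\rangle\to0$, so $\langle Tu_{n},v_{n}\rangle\to0$. This contradicts the lower bound just obtained, so $g$ must be a.e.\ constant and the proof is complete.

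I do not anticipate a serious obstacle here: the hypothesis supplies the continuous-masa decomposition $T=M_{g}+F$ for free, and the atomic masa $\cD_{1}$ is constructed by hand. The only points requiring a little care are verifying that the $u_{n},v_{n}$ (together with a basis of the complement) genuinely form an orthonormal basis of $\hilb$ and that the off-diagonal terms $\langle M_{g}e_{n},f_{n}\rangle$ vanish; both are immediate consequences of $E_{a}\cap E_{b}=\varnothing$.
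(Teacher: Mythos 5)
Your proof is correct, and it takes a genuinely different route from the paper's. The paper first observes that every projection sits inside some masa, so $TP - PT \in \cF(\hilb)$ for every projection $P$; it then invokes the Fillmore--Matsumoto theorem that every bounded operator is a finite linear combination of projections to upgrade this to $TX - XT \in \cF(\hilb)$ for all $X$, and finally tests this against the unilateral shift $S$ with respect to an orthonormal basis supporting a discrete masa $\cD_0$: writing $T = D_0 + F_0$ with $D_0 = \mathrm{diag}(d_n)$, the commutator $SD_0 - D_0S$ carries the differences $d_n - d_{n+1}$ on its subdiagonal and is finite rank only when $d_n$ is eventually constant. Your argument instead reads $T = M_g + F$ off a \emph{continuous} masa $L^\infty([0,1],m)$ and kills nonconstant $g$ by constructing a bespoke atomic masa $\cD_1$ from the mixed vectors $u_n = (e_n+f_n)/\sqrt2$, $v_n = (e_n-f_n)/\sqrt2$ supported near two distinct essential values $a,b$; the hypothesis applied to $\cD_1$ forces $\langle Tu_n,v_n\rangle\to 0$, while the $M_g$-part keeps this inner product bounded below by $\delta$. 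What each approach buys: the paper's proof is shorter but leans on an external theorem (finite spans of projections) and a clean algebraic shift computation; yours is entirely self-contained, works directly from the spectral set structure of a multiplication operator, and incidentally produces the scalar $\alpha$ and finite-rank $H$ coming from the (unique, by Theorem~\ref{thm4.5}(b)) continuous-masa decomposition rather than accumulating an extra finite-rank perturbation when passing from $D_0$ to a scalar. All the small verifications you flag---that $\{u_n\}\cup\{v_n\}$ together with a basis of the complement is an orthonormal basis, and that the cross terms $\langle M_g e_n, f_m\rangle$ vanish---do go through as claimed, since $E_a \cap E_b = \varnothing$.
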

	
\begin{pf}
Let $0 \not = P$ be any projection in $\bofh$.   By a routine application of Zorn's Lemma, we can find a masa $\cD$ with $P \in \cD$.   Let $T = D + F$, where $D \in \cD$ and $F \in \cF(\hilb)$.   Then $D, P \in \cD$ implies that $D P = P D$, and hence $T P - P T = F P - P F \in \cF(\hilb)$.   That is, $T P - P T \in \cF(\hilb)$ for every projection $P$ in $\bofh$.

By a result of Fillmore~\cite{Fil1966} (see also the result of Matsumoto~\cite{Mat1984}), every $X \in \bofh$ is a finite linear combination of projections.  	It follows that $T X  - X T \in \cF(\hilb)$ for every $X \in \bofh$.   Let $(e_n)_{n=1}^\infty$ be an orthonormal basis for $\hilb$, and let $\cD_0$ denote the set of diagonal operators on $\hilb$ relative to this basis.   Since $\cD_0$ is again a masa in $\bofh$, we may write $T = D_0 + F_0$ for some $D_0 = \mathrm{diag}\, (d_n)_{n=1}^\infty \in \cD_0$ and $F_0 \in \cF(\hilb)$.   Let $S \in \bofh$ denote the unilateral forward shift operator relative to this basis, so that $S e_n = e_{n+1}$ for all $n \ge 1$.   Since $S T - T S \in \cF(\hilb)$, we also have $S D_0 - D_0 S \in \cF(\hilb)$.   But 
\[
S D_0 - D_0 S = \begin{bmatrix} 0 & & & & \\ d_1 - d_2 & 0 & & & \\ & d_2 - d_3 & 0 & & \\ & & d_3 - d_4 & 0 & \\ & & & \ddots & \ddots \\ \end{bmatrix}. \]
This operator is finite-rank if and only if there exists $N \ge 1$ so that $d_{n} - d_{n+1} = 0$ for all $n \ge N$; i.e. $d_n = d_N$ for all $n \ge N$.  Thus $D_0 \in \bbC I + \cF(\hilb)$, and hence $T = D_0 + F_0 \in \bbC I + \cF(\hilb)$ as well.
\end{pf}

\begin{cor}
Suppose that $T \in \bofh$ and that every half-space of $\hilb$ is almost-invariant for $T$.   Then $T = \alpha I + H$ for some $\alpha \in \bbC$ and $H \in \bofh$ a finite-rank operator.
\end{cor}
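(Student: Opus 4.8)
The plan is to deduce this corollary directly from Theorem~\ref{thm4.5} and Proposition~\ref{nov25_1.2}. If $\hilb$ is finite-dimensional the statement is trivial (take $\alpha=0$ and $H=T$), so I may assume throughout that $\hilb$ is infinite-dimensional.

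The first step is the key observation: for \emph{every} masa $\cD\subseteq\bofh$, the range of every projection $P\in\cD$ is almost-invariant for $T$. Indeed, $P\hilb$ is a closed subspace of $\hilb$, hence it is finite-dimensional, finite-codimensional, or a half-space. In the first two cases $P\hilb$ is almost-invariant under every operator, and in the remaining case $P\hilb$ is a half-space, so it is almost-invariant for $T$ by hypothesis. Thus the hypothesis of the corollary already guarantees the hypothesis of Theorem~\ref{thm4.5} for an \emph{arbitrary} masa.

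The second step is to apply Theorem~\ref{thm4.5} to $\cD$: it produces $D\in\cD$ and $F\in\cF(\hilb)$ with $T=D+F$. Since $\cD$ was an arbitrary masa in $\bofh$, this is precisely the hypothesis of Proposition~\ref{nov25_1.2}. Applying that proposition then yields $\alpha\in\bbC$ and a finite-rank operator $H$ with $T=\alpha I+H$, which is the desired conclusion.

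I do not expect a genuine obstacle here; the only point that requires a moment's care is the first step, namely noticing that the corollary's hypothesis ``every half-space is almost-invariant'' automatically upgrades to ``the range of every projection in every masa is almost-invariant'', since finite-dimensional and finite-codimensional subspaces are almost-invariant for free. After that, the two cited results do all the work.
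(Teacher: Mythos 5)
Your proof is correct and follows essentially the same route as the paper: apply Theorem~\ref{thm4.5} to an arbitrary masa to get $T = D + F$, then invoke Proposition~\ref{nov25_1.2}. The only difference is that you spell out explicitly why the hypothesis of Theorem~\ref{thm4.5} holds (finite-dimensional and finite-codimensional projection ranges being automatically almost-invariant), a small detail the paper leaves implicit.
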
		
		
\begin{pf}
By Theorem~\ref{thm4.5}, we know that if $\cD$ is a masa in $\hilb$ then $T = D + F$ for some $D \in \cD$ and some finite-rank operator $F$.  Since this is true for all masas, the result then follows from Proposition~\ref{nov25_1.2}.
\end{pf}



\section{Almost-reducing subspaces} \label{sec5}

As we have already seen, if $\cX$ is a Banach space and $\mathcal{A}$ is a norm-closed subalgebra of $\mathcal{B}(\cX)$ admitting a  complemented  almost-invariant half-space, then $\mathcal{A}$ admits an invariant half-space.  Let $\mathcal{H}$ be a Hilbert space and $\varnothing \not = \mathcal{S} \subseteq \mathcal{B}(\mathcal{H})$.  
We shall say that a half-space $\mathcal{M}$ of $\mathcal{H}$ is \term{reducing}  for $\mathcal{S}$ if the orthogonal projection $Q$ of $\mathcal{H}$ onto $\mathcal{M}$ lies in the commutant $\mathcal{S}^\prime := \{ T \in \mathcal{B}(\mathcal{H}): T S = S T \mbox{ for all } S \in \mathcal{S}\}$ of $\mathcal{S}$.   Equivalently, $\mathcal{M}$ must be  invariant for both $\mathcal{S}$ and for $\mathcal{S}^* := \{ S^*: S \in \mathcal{S}\}$.  We shall say that $\cM$ is \term{almost reducing} for $\cS$ if both $\cM$ and $\cM^\perp$ are almost-invariant for $\cS$, or equivalently, if $\cM$ is almost-invariant for both $\cS$ and $\cS^*$.

In light of the results of Section~\ref{sec3}, it is reasonable to ask whether, in the Hilbert space setting, a norm-closed subalgebra $\mathcal{A}$ of operators in $\mathcal{B}(\mathcal{H})$ admitting an {almost-reducing} half-space admits a reducing subspace.   The following example shows that this need not be the case.

\bigskip

\begin{eg} \label{eg5.1}
Let $\mathcal{H} = \ell_2(\bbZ)$ with orthonormal basis $\{ e_n \}_{n \in \bbZ}$.  Recall that for $x, y \in \mathcal{H}$, $x \otimes y^*$ represents the rank-one operator $x \otimes y^* (z) = \langle z, y \rangle x$ for all $z \in \mathcal{H}$.

Let $\mathcal{A}$ denote the algebra of operators $A$ whose matrix $[a_{i j}]$ relative to the basis $\{ e_n\}_{n \in \bbZ}$ satisfies:
\[
a_{i j} = 0 \mbox{ if } 
\begin{cases}
		| i - j| \ge 2, \mbox{ or } \\ 
		| i - j| = 1 \mbox{ and } i \mbox{ is odd.}
\end{cases} \]		
In other words, every matrix $A$ in $\mathcal{A}$ is of the following form:

\[
\bordermatrix{\ & \ldots & e_{-3} & e_{-2} & e_{-1} & e_0 & e_1 & e_2 & e_3 & \ldots \cr
\vdots & \ddots & & & & & & & & \cr
e_{-3}& * & * & * & & & & & & \cr
e_{-2}& & & * & & & & & & \cr
e_{-1}& & & * & * & * & & & & \cr
e_{0}& & & & & * & & & &  \cr
e_{1}& & & & & * & * & * & & \cr
e_{2}& & & & & & & * & & \cr
e_{3}& & & & & & & * & * & * \cr
\vdots & & & & & & & & & \ddots }
\]

\bigskip

Observe that if $\mathcal{D}$ denotes the diagonal (bounded) operators relative to $\{ e_n\}_{n \in \bbZ}$, then $\mathcal{D} \subseteq \mathcal{A}$, and $\mathcal{D}$ is a masa in $\mathcal{B}(\mathcal{H})$. If $\mathcal{M}$ is a reducing subspace for $\mathcal{A}$, and if $Q$ denotes the orthogonal projection of $\mathcal{H}$ onto $\mathcal{M}$, then $Q \in \mathcal{A}^\prime$, whence $Q \in \mathcal{D}^\prime = \mathcal{D}$.  Thus the only possible reducing subspaces must be standard  subspaces.  (Recall that a \term{standard subspace} relative to this masa is a subspace densely spanned by the basis vectors it contains.)

Since $Q \in \mathcal{D}$, we can write $Q = \mathrm{diag} \{ q_n\}_{n \in \bbZ}$.  Since $Q = Q^2$, we have that $q_n \in \{ 0, 1\}$ for all $n \in \bbZ$.   Suppose that $Q \not = 0$, so that there exists $m \in \bbZ$ for which $q_m = 1$.

\bigskip

\noindent{\textit{Case 1:}}\ \ 
If $m$ is even, then $W_m := e_{m-1} \otimes e_m^* \in \mathcal{A}$.   Since $Q \in \mathcal{A}^\prime$, $Q W_m = W_m Q$, which yields:
\[
q_{m-1} (e_{m-1} \otimes e_m^*) = q_m (e_{m-1} \otimes e_m^*). \]
Thus $q_{m-1} = q_m = 1$.

Similarly, $V_m := e_{m+1} \otimes e_m^* \in \cA$, and the equation $Q V_m = V_m Q$ implies
\[
q_{m+1} (e_{m+1} \otimes e_m^*) = q_m (e_{m+1} \otimes e_m^*). \]
Thus $q_{m+1} = q_m = 1$.

\smallskip
\noindent{\textit{Case 2:}}\ \ 
If $m$ is odd, we can employ a similar argument, only this time setting\linebreak $W_m = e_m \otimes e_{m+1}^*$ and $V_m = e_m \otimes e_{m-1}^*$.   Again, $V_m, W_m \in \mathcal{A}$ and so $Q V_m = V_m Q$, while $Q W_m = W_m Q$, which imply that $q_{m-1} = q_{m+1} = q_m = 1$.

From this it readily follows that $q_n = 1$ for all $n \in \bbZ$, and so $Q = I$.  
We have shown that $\mathcal{A}$ does not admit any proper reducing subspaces.

\bigskip

On the other hand, $\mathcal{A}$ admits a plethora of almost-reducing half-spaces.   For example, if for $n \in \bbZ$ we set $\mathcal{H}_n := \mathrm{span}\{ e_k: k \le n\}$, then each $\mathcal{H}_n$ is easily seen to be an almost-reducing half-space for $\mathcal{A}$ (with defect equal to 1).
\end{eg}

\begin{eg} \label{eg5.2}
With a bit more effort, it is possible to construct a single irreducible operator $T \in \mathcal{B} (\ell_2(\bbZ))$ such that $\mathcal{A}_T := \overline{\mathrm{alg}(T)}$ admits a large number of almost-reducing half-spaces.   We shall exhibit such an operator $T$ which lies in the algebra $\mathcal{A}$ defined above, implying that $\mathcal{A}_T \subseteq \mathcal{A}$.   In particular, every almost-reducing subspace of $\mathcal{A}$ is automatically almost reducing for $\mathcal{A}_T$.

\bigskip

With $\{ e_n\}_{n \in \bbZ}$ as above, consider the operator $T$ whose matrix is given by
\[
T = 
\bordermatrix{\ & \ldots & e_{-3} & e_{-2} & e_{-1} & e_0 & e_1 & e_2 & e_3 & \ldots \cr
\vdots & \ddots & & & & & & & & \cr
e_{-3}& * & d_{-3} & v_{-2} & & & & & & \cr
e_{-2}& & & d_{-2} & & & & & & \cr
e_{-1}& & & w_{-2} & d_{-1} & v_0 & & & & \cr
e_{0}& & & & & d_0 & & & &  \cr
e_{1}& & & & & w_0 & d_1 & v_2 & & \cr
e_{2}& & & & & & & d_2 & & \cr
e_{3}& & & & & & & w_2 & d_3 & * \cr
\vdots & & & & & & & & & \ddots }.
\]
The values of $\{ d_n\}_{n \in \bbZ}$, $\{w_{2n}\}_{n \in \bbZ}$ and $\{ v_{2n}\}_{n \in \bbZ}$ will be chosen in such a way as to guarantee that $T$ is irreducible.  We first decompose 
\[
\ell_2(\bbZ) = \mathcal{H}_o \oplus \mathcal{H}_e, \]
where $\hilb_o = [ \{e_{2 k+1} : k \in \mathbb{Z}\} ]$ and $\mathcal{H}_e := [ \{ e_{2k}: k \in \mathbb{Z} \}]$.  Relative to this decomposition, we may write 
\[
T = \begin{bmatrix} D_o & T_2 \\ 0 & D_e \end{bmatrix}, \]
where $D_0 = \mathrm{diag} \{ d_{2k+1} \}_{k \in \mathbb{Z}}$ and $D_e = \mathrm{diag} \{ d_{2k}\}_{ k \in \mathbb{Z}}$.  The precise form of $T_2$ need not concern us just yet.

Suppose that $P = \begin{bmatrix} P_o & Q \\ Q^* & P_e \end{bmatrix}$ is a projection commuting with $T$.  Then 
\[
\begin{bmatrix} D_o P_o + T_2 Q^* & D_o Q + T_2 P_e \\ D_e Q^* & D_e P_e \end{bmatrix} = \begin{bmatrix} P_o D_o & P_o T_2 + Q D_e \\ Q^* D_o & Q^* T_2 + P_e D_e \end{bmatrix}. \]

In particular, we have \[ D_e Q^* = Q^* D_o.  \]

\bigskip

Recall that if $\mathcal{H}$ and $\mathcal{K}$ are Hilbert spaces, $A \in \mathcal{B}(\mathcal{H})$, $B \in \mathcal{B}(\mathcal{K})$, then the \emph{Rosenblum operator} $\tau_{A, B}: \mathcal{B}(\mathcal{H}, \mathcal{K}) \to \mathcal{B}(\mathcal{H}, \mathcal{K})$ defined by $\tau_{A, B} (X) = A X - X B$ has spectrum contained in $\sigma(A) - \sigma(B) := \{ \alpha - \beta : \alpha \in \sigma(A), \beta \in \sigma(B)\}$ (see, e.g., \cite[Corollary~3.2]{Herr89}).  In particular, therefore, if $\sigma(A) \cap \sigma(B) = \varnothing$, then $\tau_{A, B}$ is invertible, and hence is injective.  As such, if we choose $D_o$ and $D_e$ so that $\sigma(D_o) \cap \sigma(D_e) = \varnothing$, then $\tau_{D_e, D_o}$ injective combined with the equation $D_e Q^* = Q^* D_o$ implies that $Q^* = 0$, and hence $Q = 0$.    

Furthermore, this will then force $D_o P_o = P_o D_o$ and $D_e P_e = P_e D_e$.   By considering adjoints, we obtain $D_o^* P_o = P_o D_o^*$ and $D_e^* P_e = P_e D_e^*$, so that $P_o$ belongs to the commutant $(W^*(D_o))^\prime$ of the von Neumann algebra $(W^*(D_o))$ generated by $D_o$ and similarly, $P_e \in (W^*(D_e))^\prime$.   If we choose all $\{ d_{2k+1}: k \in \bbZ \}$ and $\{ d_{2k}: k \in \mathbb{Z}\}$ to be distinct, then $(W^*(D_o)^\prime = W^*(D_o) = [ \{ e_{2k+1} \otimes e_{2k+1}^*: k \in \mathbb{Z} \}]$, while $W^*(D_e)^\prime = W^*(D_e) = [ \{ e_{2k} \otimes e_{2k}^*: k \in \mathbb{Z} \}]$, which are simply the diagonal masas of $\mathcal{H}_o$ and $\mathcal{H}_e$ respectively.

To satisfy both of the above conditions, it suffices to choose $\{ d_{2k} : k \in \mathbb{Z} \}$ a dense subset of $[\frac{3}{4}, 1]$ with $d_{2k} \not = d_{2j}$ unless $j = k$ and $\{ d_{2k + 1}: k \in \mathbb{Z} \}$ dense in $[\frac{1}{4}, \frac{1}{2}]$ with $d_{2k +1} \not = d_{2j +1}$ unless $j = k$.  In particular $\sigma(D_o) \cap \sigma (D_e) = [\frac{1}{4}, \frac{1}{2}] \cap [\frac{3}{4}, 1] = \varnothing$.

Combined with the previous observations, we find that $P_o$ and $P_e$ are diagonal operators relative to the bases $\{ e_{2k+1}: k \in \mathbb{Z} \}$ and $\{ e_{2k}: k \in \mathbb{Z}\}$ respectively, which shows that 
\[
P = P_o \oplus P_e \simeq \mathrm{diag} \{ ..., p_{-2}, p_{-1}, p_0, p_1, p_2, ... \} \]
lies in the masa $\cD = [ \{ e_n \otimes e_n^*: n \in \mathbb{Z}\}]$ of $\mathcal{B}(\ell_2(\mathbb{Z}))$.  There remains only to choose $\{ v_{2k}: k \in \mathbb{Z}\}$ and $\{ w_{2k}: k \in \mathbb{Z}\}$ so as to guarantee that $p_i = p_j$ for all $i, j \in \mathbb{Z}$.

Now 
\[
P T = \bordermatrix{\ & \ldots & e_{-3} & e_{-2} & e_{-1} & e_0 & e_1 & e_2 & e_3 & \ldots \cr
\vdots & \ddots & & & & & & & & \cr
e_{-3}& * & p_{-3} d_{-3} & p_{-3} v_{-2} & & & & & & \cr
e_{-2}& & & p_{-2} d_{-2} & & & & & & \cr
e_{-1}& & & p_{-1} w_{-2} & p_{-1} d_{-1} & p_{-1} v_0 & & & & \cr
e_{0}& & & & & p_0 d_0 & & & &  \cr
e_{1}& & & & & p_1 w_0 & p_1 d_1 & p_1 v_2 & & \cr
e_{2}& & & & & & & p_2 d_2 & & \cr
e_{3}& & & & & & & p_3 w_2 & p_3 d_3 & * \cr
\vdots & & & & & & & & & \ddots },
\]
while 
\[
T P = \bordermatrix{\ & \ldots & e_{-3} & e_{-2} & e_{-1} & e_0 & e_1 & e_2 & e_3 & \ldots \cr
\vdots & \ddots & & & & & & & & \cr
e_{-3}& * & p_{-3} d_{-3} & p_{-2} v_{-2} & & & & & & \cr
e_{-2}& & & p_{-2} d_{-2} & & & & & & \cr
e_{-1}& & & p_{-2} w_{-2} & p_{-1} d_{-1} & p_0 v_0 & & & & \cr
e_{0}& & & & & p_0 d_0 & & & &  \cr
e_{1}& & & & & p_0 w_0 & p_1 d_1 & p_2 v_2 & & \cr
e_{2}& & & & & & & p_2 d_2 & & \cr
e_{3}& & & & & & & p_2 w_2 & p_3 d_3 & * \cr
\vdots & & & & & & & & & \ddots }.
\]
But $P T = T P$, and so
\begin{align*}
p_{2k - 1} v_{2k} & = p_{2k} v_{2k} \\
p_{2k+1} w_{2k} &= p_{2k} w_{2k}, 
\end{align*}
for all $k \in \mathbb{Z}$.   As long as $w_{2k} \not = 0 \not = v_{2k}$ for all $k \in \mathbb{Z}$, we find that $p_{2k-1} = p_{2k} = p_{2k+1}$ for all $k$, from which it clearly follows that $p_i = p_j$ for all $i, j \in \mathbb{Z}$.   For example, choosing $v_{2k} = 1 = w_{2k}$ for all $k \in \mathbb{Z}$ will do.

With these (highly non-unique) choices of $d_k, v_{2k}$ and $w_{2k}$, we find that any projection $P$ commuting with $T$ must be scalar - i.e., $P = 0$ or $P = I$.  It follows that the corresponding $T$ is irreducible, as required.
\end{eg}

\end{document}